\pgfplotsset{compat=1.14}
\newcommand{\cN}{\mathcal{N}}
\newcommand{\cH}{\mathcal{H}}
\newcommand{\cT}{\mathcal{T}}
\newcommand{\cE}{\mathcal{E}}
\newcommand{\bR}{\mathbb{R}}
\newcommand{\bN}{\mathbb{N}}
\newcommand{\rd}{\mathrm{d}}
\newcommand{\rN}{\mathrm{N}}
\newcommand{\sfh}{\mathsf{h}}
\newcommand{\sfb}{\mathsf{b}}
\newcommand{\dist}{\operatorname{dist}}
\crefname{hypothesis}{Hypothesis}{Hypotheses}
\begin{document}
\headers{Edge Basis Functions with Exponential Convergence}{Y. Chen, T.Y. Hou, and Y. Wang}

\title{Exponential Convergence for Multiscale Linear Elliptic PDEs via Adaptive Edge Basis Functions\thanks{Submitted to the editors DATE: July 2020.
\funding{This research is in part supported by NSF Grants DMS-1912654 and DMS-1907977. Y. Chen is supported by the Caltech Kortchak Scholar Program.}}}

\author{Yifan Chen\thanks{Applied and Computational Mathematics, Caltech
  (\email{yifanc@caltech.edu}, \email{hou@cms.caltech.edu}).}
  \and Thomas Y. Hou\footnotemark[2]
\and Yixuan Wang\thanks{School of Mathematical Science, Peking University
  (\email{roywangyx@pku.edu.cn}).}}

\maketitle

\begin{abstract}
  In this paper, we introduce a multiscale framework based on adaptive edge basis functions to solve second-order linear elliptic PDEs with rough coefficients. One of the main results is that we prove the proposed multiscale method achieves nearly exponential convergence in the approximation error with respect to the computational degrees of freedom.  Our strategy is to perform an energy orthogonal decomposition of the solution space into a coarse scale component comprising $a$-harmonic functions in each element of the mesh, and a fine scale component named the bubble part that can be computed locally and efficiently. The coarse scale component depends entirely on function values on edges. Our approximation on each edge is made in the Lions-Magenes space $H_{00}^{1/2}(e)$, which we will demonstrate to be a natural and powerful choice. We construct edge basis functions using local oversampling and singular value decomposition.  When local information of the right-hand side is adaptively incorporated into the edge basis functions, we prove a nearly exponential convergence rate of the approximation error. Numerical experiments validate and extend our theoretical analysis; in particular, we observe no obvious degradation in accuracy for high-contrast media problems.
\end{abstract}

\begin{keywords}
  Multiscale PDEs, Energy Orthogonal Decomposition, Edge Basis Function, Adaptive Method, Oversampling, Exponential Convergence.
\end{keywords}

\begin{AMS}
  65N30, 35J25, 65N15, 31A35.
\end{AMS}
\tableofcontents
\section{Introduction}
        Multiscale methods have been widely deployed in the numerical simulation of Partial Differential Equations (PDEs). They provide an efficient way for modeling, representing, computing, and quantifying the solution at a variety of scales that are of interest in applications. A critical reason why multiscale methods are so powerful is that they allow different scales to be decoupled and treated in their own fashion, with the structured information fully exploited at each scale. Simultaneously, a seamless coupling scheme could be designed to combine different scales to get the final solution. This paper aims to study a particular coarse-fine scale decomposition of the solution space for solving the multiscale linear elliptic equation. The coarse scale component comprises $ a $-harmonic functions in each local element of the mesh and is approximated via edge basis functions. The fine scale component is solved by a local computation that is efficient and parallelizable. The combination of the two components leads to nearly exponential convergence in the approximation error with respect to the computational degrees of freedom; namely, it decays as $O\left(\exp(-Cm^\gamma)\right)$ for some $C,\gamma>0$, where $m$ is the number of degrees of freedom. This paper is a further development of the work \cite{hou2015optimal}.
    \subsection{Background and Context} Let $\Omega \subset \bR^d$ be a bounded, connected, open domain with a Lipschitz boundary $\partial \Omega$. Consider the linear elliptic equation: 
    \begin{equation}
    \left\{
    \begin{aligned}
    \label{eqn: elliptic rough}
    -\nabla \cdot (a \nabla u )&=f, \quad \text{in} \  \Omega\\
    u&=0, \quad \text{on} \  \partial\Omega\, .
    \end{aligned}
    \right.
    \end{equation}
    Here, we assume $u \in H_0^1(\Omega)$ and $f \in L^2(\Omega)$. The coefficients $a \in L^{\infty}(\Omega)$ and $0 < a_{\min} \leq a(x) \leq a_{\max}<\infty$ for all $x \in \Omega$. Typically, $a(x)$ has spatial oscillations, which leads to a multiscale behavior in the solution $u(x)$. No scale separation or periodicity of $a$ is assumed here. Our goal is to numerically solve the equation \eqref{eqn: elliptic rough}. 
    
    We adopt the standard Garlerkin methodology. Given a finite dimensional space $V_H \subset H_0^1(\Omega)$, 
    we can write a weak formulation of \eqref{eqn: elliptic rough} as
    \begin{equation}
    \label{eqn: weak formulation}
        \text{Find}\  u_H \in V_H \quad \text{such that} \quad \int_{\Omega} a \nabla u_H \cdot \nabla v = \int_{\Omega} fv \quad \text{for any } v \in V_H\, .
    \end{equation}
    Elements of $V_H$ are called \textit{basis functions}. The solution $u_H$ is a linear combination of basis functions, which approximates the true solution $u$. Denote the energy norm as $\|u\|_{H_a^1(\Omega)}:=\int_{\Omega} a|\nabla u|^2$. The standard finite element theory implies
    \begin{equation}
    \label{eqn: Galerkin projection}
        \|u-u_H\|_{H_a^1(\Omega)}=\inf_{v \in V_H} \|u-v\|_{H_a^1(\Omega)}\, .
    \end{equation}

    We can interpret \eqref{eqn: elliptic rough} as a function approximation problem. We are asked to approximate $u$ given the right-hand side data $f$. The Galerkin method \eqref{eqn: weak formulation} allows us to get a projection of $u$ into $V_H$, i.e., the best approximation of $u$ in $V_H$, using the data $f$. For the sake of accuracy, space $V_H$ should approximate the solution well in the energy norm, according to \eqref{eqn: Galerkin projection}. Because $a$ is rough, $u$ contains highly oscillatory patterns. Simple piecewise polynomials used in the conventional finite element method cannot lead to a satisfactory approximation in this general setting \cite{babuvska2000can}.
    \subsection{Coarse-Fine Decomposition of Solution Space}
    \label{subsec: Coarse-Fine Decomposition of Solution Space}
    For the reason mentioned at the end of the last subsection, we start to study the solution space of \eqref{eqn: elliptic rough}. For generality, we assume $f\in H^{-1}(\Omega)$ for now; this condition suffices for the existence of solution $u \in H_0^1(\Omega)$ in \eqref{eqn: elliptic rough}. In this case, the solution space is the whole $H_0^1(\Omega)$, because $-\nabla \cdot (a\nabla \cdot)$ leads to an isomorphism between $H_0^1(\Omega)$ and $H^{-1}(\Omega)$.
    \subsubsection{Decomposition of Solution Space}

    Let $\cT_H$ be a regular partition of the domain $ \Omega $ into finite elements such as triangles, quadrilaterals, and tetrahedra, with a mesh size $H$.
    In each element $T \in \cT_H$, the solution $u$ satisfies the elliptic equation in $T$, with a Dirichlet boundary condition on $\partial T$ determined by the values of $u$ on the boundary. We can locally decompose the solution as  $u=u_{T}^{\sfh}+u_T^{\sfb}$, where the two components satisfy respectively 
    \begin{equation}
    \label{eqn: harmonic-bubble splitting}
    \begin{aligned}
    &\left\{
    \begin{aligned}
    -\nabla \cdot (a \nabla u_T^\sfh )&=0, \quad \text{in} \  T\\
    u_T^\sfh&=u, \quad \text{on} \  \partial T\, ,
    \end{aligned}
    \right.
    \\
    &\left\{
    \begin{aligned}
    -\nabla \cdot (a \nabla u^\sfb_T )&=f, \quad \text{in} \  T\\
    u^\sfb_T&=0, \quad \text{on} \  \partial T\, .
    \end{aligned}
    \right.
    \end{aligned}
    \end{equation}
    The part $u^{\sfh}_T$ incorporates  the boundary value of $u$, while $u^{\sfb}_T$ contains information of the right-hand side. System \eqref{eqn: harmonic-bubble splitting} admits solutions because $u\in H^{1/2}(\partial T)$ when it is restricted to $\partial T$, and $f \in H^{-1}(T)$ when it is restricted to $T$.
    
    Furthermore, we can define a global function $u^{\sfh}$ and $u^{\sfb}$ in $\Omega$, such that $u^{\sfh}(x)=u^{\sfh}_T(x)$ and $u^{\sfb}(x)=u^{\sfb}_T(x)$ when $x \in T$ for each $T$. At the intersection of different $T \in \cT_H$,  $u_T^{\sfh}$ (resp. $u_T^{\sfb}$) has a unique value. Therefore, $u^\sfh$ and $u^\sfb$ are globally well defined. Moreover, they belong to $H_0^1(\Omega)$ by standard properties of Sobolev's space.
    
    The component $u^{\sfh}_T$ (resp. $u^{\sfh}$) is called the local (resp. global) \textit{$a$-harmonic part} and $u^{\sfb}_T$ (resp. $u^{\sfb}$) is the local (resp. global) \textit{bubble part}, of the solution $u$. The naming of the bubble part can be traced back to \cite{brezzi1994choosing, hughes1995multiscale, franca1995bubble, franca1996deriving}. By construction, $u^{\sfh}_T$ is orthogonal to $u^{\sfb}_T$ in $T$ under the energy norm; the same fact holds for $u^{\sfh}$ and $u^{\sfb}$ by splitting the inner product in $\Omega$ into a sum of local inner product.

    In this way, we get an energy orthogonal decomposition of the solution $u=u^\sfh+u^\sfb$, which further yields a decomposition of the solution space $H_0^1(\Omega)$. We write
    \begin{equation}
    \label{eqn: large, fine decomposition of solution space}
        \boxed{H_0^1(\Omega)=V^{\sfh}\oplus_a V^{\sf{b}}}
    \end{equation}such that $V^{\sfh}$ is the space of $u^{\sfh}$ and $V^{\sfb}$ of $u^{\sfb}$. More precisely, we have
    \begin{equation}
        \begin{aligned}
        &V^{\sfh}=\{v \in H_0^1(\Omega): -\nabla \cdot (a\nabla v)=0 \ \text{in every } T \in \cT_H  \}\, ,\\
        &V^{\sfb}=\{v \in H_0^1(\Omega): v=0 \ \text{on } \partial T, \ \text{for every } T \in \cT_H  \}\, .
        \end{aligned}
    \end{equation}
    The boundary values are understood in the sense of trace. The symbol $\oplus_a$ in \eqref{eqn: large, fine decomposition of solution space} denotes the direct sum that is energy orthogonal.
    \subsubsection{Coarse and Fine Scale Components}
    Now and hereafter in this paper, we assume $f \in L^2(\Omega)$. The solution space of \eqref{eqn: elliptic rough} for all such $f$ will be a subspace of $H_0^1(\Omega)$. We can still use the decomposition for the solution $u=u^\sfh+u^\sfb$.  In this case, we will understand $V^{\sfb}$ as the fine scale or microscopic space, because functions in $V^{\sfb}$ oscillate at a frequency larger than $1/H$, due to the classical elliptic estimate upon scaling for each element:
    \begin{equation}
    \label{eqn: bubble part, small}
        \|u^\sfb\|_{H_a^1(\Omega)}\leq CH\|f\|_{L^2(\Omega)}\, ,
    \end{equation}
    for some constant $C$ independent of $u$ and $H$. Conversely, we refer to $V^{\sfh}$ as the 
   coarse scale or macroscopic space. Thus, we get a coarse-fine scale decomposition of the solution space.
    
    Let us return to problem \eqref{eqn: elliptic rough}: we want to approximate a function $u \in V^{\sfh}\oplus_a V^{\sfb}$, given the right-hand side data $f \in L^2(\Omega)$. With the decomposition \eqref{eqn: large, fine decomposition of solution space}, we will treat the two scales differently. By definition, the fine scale part $u^{\sfb}\in V^{\sfb}$ depends locally on $f$, and thus can be computed efficiently by solving local elliptic equations in each $T \in \cT_H$. Moreover, since this part has a small energy norm \eqref{eqn: bubble part, small}, it can be completely ignored if we only need $O(H)$ accuracy in this norm. Nevertheless, we emphasize that the efficient computation of the fine scale part is the key to nearly exponential convergence that we will establish in this paper.
    
    For the coarse scale $a$-harmonic part $u^{\sfh} \in V^{\sfh}$, we invoke the Galerkin framework \eqref{eqn: weak formulation}, while now we seek a $V_H$ that is a subspace of $V^{\sfh}$.  In this case, we can get a more refined version of \eqref{eqn: Galerkin projection} due to the orthogonality:
    \begin{equation}
        \label{eqn: refined Galerkin projection}
        \|u^{\sfh}-u_H\|_{H_a^1(\Omega)}=\inf_{v \in V_H} \|u^{\sfh}-v\|_{H_a^1(\Omega)}\, .
    \end{equation}
    Thus, for our choice of $V_H \subset V^{\sfh}$, the Galerkin solution $u_H$ does not affect the approximation of $u^{\sfb}$. The computations of $u^{\sfh}$ and $u^{\sfb}$ are completely decoupled. 
    
  Therefore, the problem reduces to identifying a good approximation space $V_H$ for the coarse scale component $V^{\sfh}$. By definition, any function in $V^{\sfh}$ can be entirely characterized by its value on edges; that is, $V^{\sfh}$ could essentially be treated as a function space on edges. Thus, it calls for a systematic design of \textit{edge basis functions}, whose $a$-harmonic extension to each element will constitute the desired approximation space $V_H$. 
  
    \subsection{Our Contributions}
    For simplicity of presentation, we mainly focus on the case $d=2$; remarks on how to generalize to $d\geq 3$ will be made in Section \ref{sec: discussion}.
    
    First, we develop a framework for constructing edge basis functions that can lead to rigorous error estimates. The construction is similar to that in \cite{hou2015optimal}, and there are two parts, the interpolation part and enrichment part. The interpolation part comprises nodal basis functions. For the enrichment part, we propose to use the Lions-Magenes space $H_{00}^{1/2}(e)$ 
    as the natural function space on each edge $e$. An appropriate norm for measuring the approximation error is studied, and a systematic way of coupling local approximation accuracy to global accuracy is established.
    
    Then, we discuss how to achieve local approximation accuracy.  A general strategy is illustrated, based on oversampling and singular value decomposition; the approach in \cite{hou2015optimal} falls into our framework. Theoretically, we prove the exponential decay of singular values for some restriction operator mapping an $a$-harmonic function to its interpolation residue on edges. This fact, combined with the efficient computation of an oversampling bubble part, leads to the nearly exponentially decaying approximation error with respect to the degrees of freedom. If the oversampling bubble part is not computed, i.e., the information of $f$ is not incorporated into the edge basis functions, then we will get $O(H)$ approximation accuracy in the energy norm by using at most $O(\log^{d+1}(1/H))$ degrees of freedom; this matches the state-of-the-art result for linear elliptic PDEs with rough coefficients in the same setting.
    
    Finally, we present numerical experiments to validate our theoretical analysis. They match the prediction and achieve nearly exponential convergence. Moreover, since every step in our algorithm is $a(x)$-adapted, we expect the method to be robust with respect to high contrast in $a(x)$. Indeed, our experiment demonstrates no obvious degradation in accuracy for high contrast media problems.
    \subsection{Related Works}
    From one perspective, the method in this paper can be understood as a member of the family of Multiscale Finite Element Method (MsFEM) \cite{hou_multiscale_1997, hou1999convergence, efendiev2000convergence}, Generalized Multiscale Finite Element Methods (GMsFEM) \cite{efendiev_generalized_2013} and Generalized Finite Element Methods (GFEM) \cite{babuvska1983generalized}. This family of methods usually starts with a domain decomposition of $\Omega$, then builds up local approximation spaces that can capture the multiscale behavior, and finally couple these local spaces to a global approximation space $V_H$. There are two popular coupling schemes related to this paper. The first one is the partition of unity method (PUM) \cite{melenk1996partition}, introduced in the context of the GFEM.  In \cite{babuska2011optimal}, based on PUM, an optimal local basis is constructed for elliptic equations with rough coefficients, which achieves $O(H)$ accuracy in the energy norm using only $O(\log^{d+1} (1/H))$ number of basis functions in each local domain. Moreover, by using the right-hand side information, a nearly exponentially decaying error with respect to the number of basis functions can be achieved; see also \cite{babuvska2014machine, babuvska2020multiscale}. We will borrow some techniques in \cite{babuska2011optimal} to prove the nearly exponential convergence of our method.
    
    Another coupling scheme is the edge coupling, which originates from MsFEM. This paper aims to enrich the space of edge basis functions in MsFEM, to improve the approximation when the coefficients are rough. In the literature, there have been a number of works on enriching the edge basis functions.  Most of them either resort to non-conformal finite element, discontinuous Galerkin coupling, or use an additional partition of unity to form the global approximation space; see \cite{efendiev2009multiscale} and the references therein. One distinct difference between our work and other works is that no partition of unity is required for constructing our basis functions of enrichment. Moreover, approximation in a novel space $H_{00}^{1/2}(e)$ is employed for each edge $e$, which leads to the desired guarantee of accuracy. We remark that some of the recent works \cite{fu2019edge, li2019convergence} also consider enriching edge basis functions and provide a rigorous theoretical guarantee. They use $L^2(e)$ space for the edge, which corresponds to a weaker norm compared to $H_{00}^{1/2}(e)$. To the best of our knowledge, this choice does not directly lead to the state-of-the-art result that $O(\log^{d+1}(1/H))$ amount of basis functions suffices for $O(H)$ accuracy in the energy norm.
   
   On the other hand, we can also interpret the method in the framework of Variational Multiscale Methods (VMS) \cite{hughes_variational_1998}, since we decompose the solution space into coarse and fine scale components via the harmonic-bubble splitting. The decomposition, as it appears, is different from the traditional decomposition in VMS. In the literature, the decomposition can be done in a $L^2$ orthogonal sense, for example, in the projection method and wavelet homogenization \cite{dorobantu1998wavelet, engquist2002wavelet}. Furthermore, energy decomposition has been rigorously established, such as the Local Orthogonal Decomposition (LOD) \cite{malqvist_localization_2014, kornhuber2018analysis} and its multiresolution generalization Gamblets \cite{owhadi_multigrid_2017, owhadi2019operator}; see also discussions in the flux norm approach \cite{berlyand2010flux, owhadi2011localized} and the rough polyharmonic splines \cite{owhadi2014polyharmonic}. Our method uses an energy orthogonal decomposition as well, while it is different from that in the LOD approach. The coarse space in our method depends entirely on the edge values of functions. Localization of the coarse part is established directly through a prescribed oversampling domain and a corresponding singular value decomposition. A key merit of our approach is that the fine scale part can be computed very efficiently, which could explain why the nearly exponential convergence can be rigorously obtained.
    
    \subsection{Organization of this Paper}
   The rest of the paper is organized as follows. In Section \ref{sec: Localized Edge Basis Functions for The Coarse Space}, we discuss how to localize the approximation of the coarse component onto every edge. An appropriate space $H_{00}^{1/2}(e)$ for each edge $e$ is introduced and its property is rigorously established. Using this space, local to global error estimates can be derived. Section \ref{sec: Local Approximation via Oversampling} is devoted to studying how to achieve the local approximation. Oversampling and singular value decomposition are introduced to achieve this goal, and the nearly exponential convergence is rigorously proved. 
    Numerical experiments are performed in Section \ref{sec: Numerical experiments} to demonstrate the effectiveness of our proposed approach. In Section \ref{sec: discussion}, we summarize and conclude this paper, together with several discussions on potential generalizations of this method.
    \section{Localized Edge Basis Functions for Coarse Space}
    \label{sec: Localized Edge Basis Functions for The Coarse Space}
    Following the idea in Subsection \ref{subsec: Coarse-Fine Decomposition of Solution Space}, we need to find a good approximation space $V_H$ for the coarse scale component $V^{\sfh}$. In this section, we will explain in detail how to localize the approximation task via edge basis functions. Specifically, we study how the approximation error can be localized to every single edge and how to integrate these local errors to a global accuracy guarantee. We will elaborate on the topic of getting desired local approximation in Section \ref{sec: Local Approximation via Oversampling}.

    \subsection{Mesh, Geometry and Notation} To begin with, we present some notations on the mesh structure of the domain $\Omega \subset \bR^d$. We focus on $d=2$, and will provide remarks on generalization to $d\geq 3$ in Section \ref{sec: discussion}. We have nodes, edges, and elements in the mesh. Their neighbourship is very useful for describing the geometry.
    \subsubsection{Elements}
    \label{subsec:elements}
    As in Subsection \ref{subsec: Coarse-Fine Decomposition of Solution Space}, we consider a shape regular partition of the domain $ \Omega $ into finite elements, such as triangles and quadrilaterals. The collection of elements is denoted by $\cT_H=\{T_1,T_2,...,T_r\}$; we adopt the convention that each $T$ is an open set. 
    The mesh size is $H$, i.e., $\max_{T \in \cT_H} \operatorname{diam}(T)=H$. We also assume that the mesh is uniform, i.e., $\min_{T \in \cT_H} \operatorname{diam}(T)\geq c_0 H$ for some $0<c_0\leq 1$ that is independent of $H$ and $T$.  The uniformity is used to simplify the discussion of the oversampling domain in Section \ref{sec: Local Approximation via Oversampling}. The result can be directly generalized to non-uniform mesh.
    
    The shape regular property implies there is a constant $c_1>0$ independent of $H$ and $T$, such that
    \begin{equation}
        \max_{T \in \cT_H} \frac{\operatorname{diam}(T)^d}{|T|} \leq c_1\, ,
    \end{equation}
    where, $|T|$ is the volume of $T$. In the theoretical analysis, we often need to re-scale the element to a standard reference element of diameter $O(1)$. The shape regular condition ensures that the distortion of the volume is bounded.

\subsubsection{Nodes, Edges and Their Neighbors} We let $\cN_H=\{x_1,x_2,...,x_p\}$ be the collection of interior nodes, and $\cE_H=\{e_1,e_2,...,e_q\}$ be the collection of edges except those fully on the boundary of $\Omega$. An edge $e \in \cE_H$ is defined such that there exists two different elements $T_i,T_j$ with $e=\overline{T}_i\bigcap\overline{T}_j$ that has co-dimension $1$ in $\bR^d$. We will use $E_H=\bigcup_{e \in \cE_H} e \subset \Omega$ to denote the whole edges as a set. The readers should not confuse $\cE_H$ with $E_H$.

The neighbourship of these nodes, edges and elements is an important part of the geometry of the mesh. We use the symbol $\sim$ to describe the neighbourship. More precisely, if we consider a node $x \in \cN_H$, an edge $e \in \cE_H$, and an element $T \in \cT_H$, then, (1) $x \sim e$ denotes $x \in e$; (2) $e \sim T$ denotes $e \subset \overline{T}$; (3) $x \sim T$ denotes $x \in \overline{T}$. The relationship $\sim$ is symmetric.

We use $\rN(\cdot,\cdot)$ to describe the union of neighbors as a set. For example, 
$\rN(x,\cE_H)=\bigcup \{e \in \cE_H: e \sim x\} \subset E_H$, and $\rN(x,\cT_H)=\bigcup \{T \in \cT_H: T \sim x\} \subset \Omega$. Also, we have $\rN(e,\cT_H)=\bigcup \{T \in \cT_H: T \sim e\} \subset \Omega$.

The geometric relationships are illustrated in Figure \ref{fig:mesh geometry}.
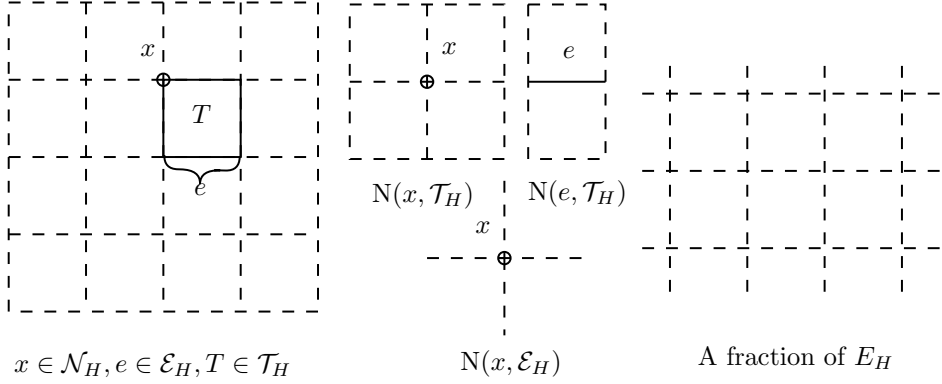
\begin{figure}[t]
    \centering
\tikzset{every picture/.style={line width=0.75pt}} 

\begin{tikzpicture}[x=0.75pt,y=0.75pt,yscale=-1,xscale=1]

\draw  [draw opacity=0][dash pattern={on 4.5pt off 4.5pt}] (45,47) -- (201,47) -- (201,203) -- (45,203) -- cycle ; \draw  [dash pattern={on 4.5pt off 4.5pt}] (84,47) -- (84,203)(123,47) -- (123,203)(162,47) -- (162,203) ; \draw  [dash pattern={on 4.5pt off 4.5pt}] (45,86) -- (201,86)(45,125) -- (201,125)(45,164) -- (201,164) ; \draw  [dash pattern={on 4.5pt off 4.5pt}] (45,47) -- (201,47) -- (201,203) -- (45,203) -- cycle ;
\draw   (123,86) -- (162,86) -- (162,125) -- (123,125) -- cycle ;
\draw   (123.5,124) .. controls (123.38,128.67) and (125.65,131.06) .. (130.31,131.18) -- (131.68,131.22) .. controls (138.34,131.39) and (141.61,133.81) .. (141.49,138.48) .. controls (141.61,133.81) and (145,131.57) .. (151.67,131.74)(148.67,131.67) -- (154.32,131.81) .. controls (158.99,131.94) and (161.38,129.67) .. (161.5,125) ;
\draw   (126.13,86) .. controls (126.13,84.27) and (124.73,82.88) .. (123,82.88) .. controls (121.27,82.88) and (119.88,84.27) .. (119.88,86) .. controls (119.88,87.73) and (121.27,89.13) .. (123,89.13) .. controls (124.73,89.13) and (126.13,87.73) .. (126.13,86) -- cycle ;
\draw  [draw opacity=0][dash pattern={on 4.5pt off 4.5pt}] (364.5,79) -- (516.5,79) -- (516.5,199) -- (364.5,199) -- cycle ; \draw  [dash pattern={on 4.5pt off 4.5pt}] (378.5,79) -- (378.5,199)(417.5,79) -- (417.5,199)(456.5,79) -- (456.5,199)(495.5,79) -- (495.5,199) ; \draw  [dash pattern={on 4.5pt off 4.5pt}] (364.5,93) -- (516.5,93)(364.5,132) -- (516.5,132)(364.5,171) -- (516.5,171) ; \draw  [dash pattern={on 4.5pt off 4.5pt}]  ;

\draw   (259.13,87) .. controls (259.13,85.27) and (257.73,83.88) .. (256,83.88) .. controls (254.27,83.88) and (252.88,85.27) .. (252.88,87) .. controls (252.88,88.73) and (254.27,90.13) .. (256,90.13) .. controls (257.73,90.13) and (259.13,88.73) .. (259.13,87) -- cycle ;
\draw  [draw opacity=0][dash pattern={on 4.5pt off 4.5pt}] (217,48) -- (295,48) -- (295,126) -- (217,126) -- cycle ; \draw  [dash pattern={on 4.5pt off 4.5pt}] (256,48) -- (256,126) ; \draw  [dash pattern={on 4.5pt off 4.5pt}] (217,87) -- (295,87) ; \draw  [dash pattern={on 4.5pt off 4.5pt}] (217,48) -- (295,48) -- (295,126) -- (217,126) -- cycle ;

\draw   (298.13,176) .. controls (298.13,174.27) and (296.73,172.88) .. (295,172.88) .. controls (293.27,172.88) and (291.88,174.27) .. (291.88,176) .. controls (291.88,177.73) and (293.27,179.13) .. (295,179.13) .. controls (296.73,179.13) and (298.13,177.73) .. (298.13,176) -- cycle ;
\draw  [dash pattern={on 4.5pt off 4.5pt}]  (291.88,176) -- (334,176) ;
\draw  [dash pattern={on 4.5pt off 4.5pt}]  (295,137) -- (295,176) ;
\draw  [dash pattern={on 4.5pt off 4.5pt}]  (256,176) -- (295,176) ;
\draw  [dash pattern={on 4.5pt off 4.5pt}]  (295,176) -- (295,215) ;

\draw [line width=0.75]    (307,87) -- (346,87) ;
\draw  [draw opacity=0][dash pattern={on 4.5pt off 4.5pt}] (307,48) -- (346,48) -- (346,126) -- (307,126) -- cycle ; \draw  [dash pattern={on 4.5pt off 4.5pt}]  ; \draw  [dash pattern={on 4.5pt off 4.5pt}]  ; \draw  [dash pattern={on 4.5pt off 4.5pt}] (307,48) -- (346,48) -- (346,126) -- (307,126) -- cycle ;

\draw (136,97) node [anchor=north west][inner sep=0.75pt]   [align=left] {$\displaystyle T$};
\draw (137,137) node [anchor=north west][inner sep=0.75pt]   [align=left] {$\displaystyle e$};
\draw (110,67) node [anchor=north west][inner sep=0.75pt]   [align=left] {$\displaystyle x$};
\draw (33,223) node [anchor=north west][inner sep=0.75pt]   [align=left] { $\displaystyle \quad x\in \mathcal{N}_{H} ,e\in \mathcal{E}_{H} ,T\in \mathcal{T}_{H}$};
\draw (262,66) node [anchor=north west][inner sep=0.75pt]   [align=left] {$\displaystyle x$};
\draw (279,156) node [anchor=north west][inner sep=0.75pt]   [align=left] {$\displaystyle x$};
\draw (323,68) node [anchor=north west][inner sep=0.75pt]   [align=left] {$\displaystyle e$};
\draw (218,136) node [anchor=north west][inner sep=0.75pt]   [align=left] {$\displaystyle\ \  \rN( x,\mathcal{T}_{H})$};
\draw (301,135) node [anchor=north west][inner sep=0.75pt]   [align=left] {$\displaystyle \ \rN( e,\mathcal{T}_{H})$};
\draw (267,221) node [anchor=north west][inner sep=0.75pt]   [align=left] {$\displaystyle \ \rN( x,\mathcal{E}_{H})$};
\draw (391,219) node [anchor=north west][inner sep=0.75pt]   [align=left] {A fraction of $\displaystyle E_{H}$};

\end{tikzpicture}
    \caption{Geometry of the mesh}
    \label{fig:mesh geometry}
\end{figure}
 \subsubsection{Notation}
    \label{subsec: notation}
    We use the term ``edge basis function" to denote a function on $E_H$, and ``basis function" usually refers to a function in the full dimensional domain $\Omega$. When there is no ambiguity, we will write $\tilde{\psi}$ to denote a function defined on edges, while for $\psi$, we refer to as a function in full-dimensional domains. We use $\psi|_e$ to denote the function $\psi$ restricted to the set $e$.
    \subsection{Edge Approximation: Set-up} 
    Now, we proceed with the discussion in Subsection \ref{subsec: Coarse-Fine Decomposition of Solution Space}. Functions in the coarse space $V^{\sfh}$ depend entirely on their values on edges. Let us define the following space on edges: 
    \[\tilde{V}^{\sfh}:=\{\tilde{\psi}: E_H \to \bR, \text{ there exists a function } \psi \in V^{\sfh}, \text{ such that } \tilde{\psi}=\psi|_{E_H} \}\, .\] 
    We have $\tilde{V}^{\sfh}=H^{1/2}(E_H)$ by the trace theorem of the Sobolev space. There is a one-to-one correspondence between functions in $\tilde{V}^{\sfh}$ and $V^{\sfh}$: $\tilde{\psi} \in \tilde{V}^{\sfh} \leftrightarrow \psi \in V^{\sfh}$; namely, the following equation holds in each $T \in \cT_H$:
    \begin{equation}
    \label{eqn: edge bulk correspondence} 
    \left\{
    \begin{aligned}
    -\nabla \cdot (a \nabla \psi)&=0, \quad \text{in} \  T\\
    \psi&=\tilde{\psi}, \quad \text{on} \  \partial T\, .
    \end{aligned}
    \right.
    \end{equation}
     Using this correspondence, the coarse scale component on edges is denoted by $\tilde{u}^{\sfh} \in \tilde{V}^{\sfh}$. The approximation space for $\tilde{u}^{\sfh}$ is $\tilde{V}_H$, corresponding to $V_H$ for $u^\sfh$. 
    
    How shall we design $\tilde{V}_H$? 
    Computationally, we prefer local edge basis functions. Here, our idea of localized construction follows that proposed in \cite{hou2015optimal}. The first step is to use some local nodal basis functions to interpolate $\tilde{u}^{\sfh}$. Then, the interpolation residue can be localized to each edge, where more enrichment edge basis functions are designed for further approximation.
    
    \subsection{Interpolation Part}
    \label{subsec: interpolation part}
    We begin with the interpolation part.
    For each node $x_i \in \cN_H$, the nodal edge basis function $\tilde{\psi}_{i}$ satisfies $\tilde{\psi}_i(x_j)=\delta_{ij}$ for every $x_j \in \cN_H$, and $\tilde{\psi}_i(x)$ is supported on $\rN(x_i,\cE_H)$. The corresponding part of $\tilde{\psi}_{i}$ in $V^{\sfh}$ is $\psi_i(x)$, which is supported in the closure of $\rN(x_i,\cT_H)$. In this paper, we set $\tilde{\psi}_i$ to be the linear tent function used in \cite{hou_multiscale_1997}. These nodal basis functions constitute the interpolation part.  More general constructions   can be considered, for which we refer to the discussions in \cite{hou2015optimal}.
    
    With the interpolation part defined, we can introduce a nodal interpolation operator $I_H: \tilde{V}^{\sfh} \cap C(E_H) \to \tilde{V}^{\sfh} \cap C(E_H)$ such that \[I_H \tilde{v}:=\sum_{x_i \in \cN_H} \tilde{v}(x_i)\tilde{\psi}_i(x)\, ,\]  for any $\tilde{v} \in \tilde{V}^{\sfh} \cap C(E_H)$. We will also identify this operator as the mapping from $V^{\sfh} \cap C(\Omega)$ to $V^{\sfh} \cap C(\Omega)$, based on the correspondence between $V^{\sfh}$ and $\tilde{V}^{\sfh}$. That is, we will write $I_H v=\sum_{x_i \in \cN_H} v(x_i)\psi_i(x)$ for any $v \in V^{\sfh} \cap C(\Omega)$. We note that in this definition, the pointwise value is well defined for continuous functions.

    The nodal edge basis functions allow us to approximate $\tilde{u}^{\sfh}$ via interpolation. Since $f \in L^2(\Omega)$, a classical result from elliptic PDEs implies that the solution $u \in C^{\alpha}(\Omega)$ for some $0<\alpha<1$ that depends on the coefficients $a$ and the domain $\Omega$; for details see Theorems 8.22 and 8.29 in \cite{gilbarg2015elliptic}. Thus, $\tilde{u}^{\sfh} \in \tilde{V}^{\sfh}\cap C^{\alpha}(E_H)$, and we can apply the interpolation operator on it. The interpolation residue is $\tilde{u}^{\sfh}-I_H\tilde{u}^{\sfh}$. The enrichment part is introduced further to approximate the residue.
   \begin{remark}
    The interpolation part here is the same as the basis functions in MsFEM \cite{hou_multiscale_1997, hou1999convergence, efendiev2000convergence}. Our enrichment part to be presented in the next subsection is used to further improve the MsFEM so that it can handle rough coefficients with guaranteed accuracy.
    \end{remark}
    \subsection{Enrichment Part}
    \label{subsec: enrichment part}
    The residue $\tilde{u}^{\sfh}-I_H\tilde{u}^{\sfh}$ vanishes at nodal points.
    This property inspires us to localize the residue onto each edge.
    To illustrate the idea, let us fix one edge $e$ for now, and use $P_e$ to denote the restriction operator onto $e$, such that the function $P_e(\tilde{u}^{\sfh}-I_H\tilde{u}^{\sfh}):=(\tilde{u}^{\sfh}-I_H\tilde{u}^{\sfh})|_e$.  To approximate $P_e(\tilde{u}^{\sfh}-I_H\tilde{u}^{\sfh})$, we shall first identify a proper function space that it belongs to.
    
    Clearly, $P_e(\tilde{u}^{\sfh}-I_H\tilde{u}^{\sfh}) \in H^{1/2}(e)\bigcap C^{\alpha}(e)$, and it has value $0$ at the boundary of $e$. Based on this property, we can show $P_e(\tilde{u}^{\sfh}-I_H\tilde{u}^{\sfh}) \in H_{00}^{1/2}(e)$, the Lions-Magenes space on $e$; see Proposition \ref{prop: C alpha interpolation residue}.
    \begin{proposition}
    \label{prop: C alpha interpolation residue}
    Suppose $u$ satisfies equation \eqref{eqn: elliptic rough} for $d\leq 3$ and $f \in L^2(\Omega)$, then, we have $P_e(\tilde{u}^{\sfh}-I_H\tilde{u}^{\sfh}) \in H_{00}^{1/2}(e)$ for every $e \in \cE_H$.
    \end{proposition}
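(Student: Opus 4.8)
\textit{Proof proposal.} The plan is to verify the defining condition for membership in the Lions--Magenes space directly. Recall that, after a Lipschitz parametrization of $e$ as an interval $(0,|e|)$, a function $w \in H^{1/2}(e)$ belongs to $H_{00}^{1/2}(e)$ precisely when the weighted integral $\int_e |w(x)|^2 / \dist(x,\partial e)\,\rd x$ is finite (equivalently, the extension of $w$ by zero across $\partial e$ lies in $H^{1/2}$ of a larger interval). Thus the proposition reduces to the elementary fact that a Hölder continuous function on an interval which vanishes at both endpoints satisfies this weighted bound.

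First I would collect the regularity ingredients. Since $f \in L^2(\Omega)$ and $d \le 3$, we have $f \in L^p(\Omega)$ with $p = 2 > d/2$, so the De Giorgi--Nash--Moser theory (Theorems 8.22 and 8.29 in \cite{gilbarg2015elliptic}, as already invoked before the statement) gives $u \in C^{\alpha}(\overline{\Omega})$ for some $\alpha \in (0,1)$ depending only on $a$ and $\Omega$; this is precisely where the restriction $d \le 3$ is used. On every edge one has $u^{\sfb} = 0$, since the bubble part vanishes on $\partial T$ for each $T$, hence $\tilde{u}^{\sfh} = u$ on $E_H$ and therefore $\tilde{u}^{\sfh} \in C^{\alpha}(E_H)$. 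The interpolant $I_H \tilde{u}^{\sfh}$ is a combination of the piecewise-linear tent functions, hence Lipschitz on $E_H$, so the residue $r := \tilde{u}^{\sfh} - I_H \tilde{u}^{\sfh}$ is again in $C^{\alpha}(E_H)$; moreover $r$ vanishes at every node of $\cN_H$ by the interpolation property, and it also vanishes at boundary endpoints of edges since both $u$ and the tent functions vanish there. Finally $r|_e \in H^{1/2}(e)$: indeed $\tilde{u}^{\sfh}|_e$ is the trace of $u^{\sfh} \in H^1(T)$ and $I_H\tilde{u}^{\sfh}|_e$ is piecewise linear.

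Then I would carry out the weighted estimate. Fix $e$ and parametrize it as $(0,|e|)$ with $r(0) = r(|e|) = 0$. From the Hölder bound, $|r(x)| = |r(x) - r(0)| \le [r]_{C^\alpha}\, x^{\alpha}$ and likewise $|r(x)| = |r(x) - r(|e|)| \le [r]_{C^\alpha}\,(|e|-x)^{\alpha}$, so $|r(x)| \le [r]_{C^\alpha}\, \delta(x)^{\alpha}$ with $\delta(x) := \dist(x,\partial e) = \min(x, |e|-x)$. Consequently
\[
\int_e \frac{|r(x)|^2}{\delta(x)}\,\rd x \;\le\; [r]_{C^\alpha}^2 \int_0^{|e|} \delta(x)^{2\alpha - 1}\,\rd x \;<\; \infty,
\]
because $2\alpha - 1 > -1$. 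Combined with $r|_e \in H^{1/2}(e)$, this yields $P_e(\tilde{u}^{\sfh} - I_H \tilde{u}^{\sfh}) = r|_e \in H_{00}^{1/2}(e)$, which is the assertion.

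The only genuine subtlety is the first one: pinning down the correct intrinsic characterization of $H_{00}^{1/2}(e)$ (and checking that the Lipschitz parametrization of $e$ and the passage to a reference configuration do not affect it), so that the scalar Hölder-decay estimate can be applied. Once that is in place the argument is routine, and the hypothesis $d \le 3$ enters only through the Hölder continuity of $u$.
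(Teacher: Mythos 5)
Your proof is correct and follows essentially the same route as the paper's: reduce membership in $H_{00}^{1/2}(e)$ to finiteness of the weighted integral $\int_e |r|^2/\dist(\cdot,\partial e)$, then obtain that finiteness from Hölder continuity of the residue together with its vanishing at the endpoints of $e$. You supply a bit more scaffolding than the paper (spelling out where $d\le 3$ and $f\in L^2$ enter via De Giorgi--Nash--Moser, that $u^{\sfb}=0$ on $E_H$, and that the residue vanishes also at edge endpoints on $\partial\Omega$), but the key estimate is identical.
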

    \begin{proof}
      First, we recall the definition of the space $H_{00}^{1/2}(e)$  (Chapter 33 of \cite{tartar2007introduction}): it is the space of functions $v\in H^{1/2}(e)$ such that 
      \[\frac{v(x)}{\dist(x,\partial e)} \in L^2(e)\, , \]
      where $\dist(x,\partial e)$ is the Euclidean distance from $x$ to the boundary of $e$. Thus, we can define the $H_{00}^{1/2}(e)$ norm to be
      \begin{equation}
      \label{eqn: def H 00 1/2 norm}
        \|v\|_{H_{00}^{1/2}(e)}^2:=\int_e|v(x)|^2\, \rd x+\int_e\int_e\frac{|v(x)-v(y)|^2}{|x-y|^2}\, \rd x\rd y+\int_{e} \frac{|v(x)|^2}{\dist(x,\partial e)}\, \rd x<\infty\, ,
    \end{equation}
    and the Lions-Magenes space on $e$ consists functions with a finite $H_{00}^{1/2}(e)$ norm. We will show that any function $v$ on $e$ belonging to $H^{1/2}(e)\bigcap C^{\alpha}(e)$ and vanishing at $\partial e$ will be in the space $H_{00}^{1/2}(e)$. To see that, it suffices to show
    \[\int_{e} \frac{|v(x)|^2}{\dist(x,\partial e)}\, \rd x<\infty\, , \]
    because the first two terms in \eqref{eqn: def H 00 1/2 norm} are finite due to $v \in H^{1/2}(e)$. Without loss of generality, we work on $e=[0,1]$ (otherwise we can reparametrize the edge), then it follows that
    \begin{align*}
        \int_e \frac{|v(x)|^2}{\dist(x,\partial e)}\,\rd x &=\int_0^{1/2}\frac{|v(x)-v(0)|^2}{|x|}\,\rd x+\int_{1/2}^1 \frac{|v(x)-v(1)|^2}{|x-1|}\, \rd x\\
        &\leq C\left(\int_0^{1/2}|x|^{2\alpha-1}\, \rd x +\int_{1/2}^1 |x-1|^{2\alpha-1}\, \rd x \right) <\infty\, ,
    \end{align*}
    where $C$ is a constant such that $|v(x)-v(y)|\leq C|x-y|^{\alpha}$ due to $v \in C^{\alpha}(e)$.
    Thus, $v \in H_{00}^{1/2}(e)$. Taking $v=P_e(\tilde{u}^{\sfh}-I_H\tilde{u}^{\sfh})$ completes the proof.
    \end{proof}
     \begin{remark}
     \label{rmk: H 00 1/2 equivalent to 0 extension H 1/2}
     According to Chapter 33 of \cite{tartar2007introduction}, $H_{00}^{1/2}(e)$ can also be characterized as the space of functions in $H^{1/2}(e)$, such that their zero extensions to $E_H$ is still in $H^{1/2}(E_H)$. This is the key and in fact the only property that we will use for $H_{00}^{1/2}(e)$. 
     We need to do zero extension often, in
order to move from local approximation to global approximation.
     \end{remark}

     We will choose our enrichment part of edge basis functions in $H_{00}^{1/2}(e)$. Before we find an enrichment part $\tilde{v}_e \in H_{00}^{1/2}(e)$ to approximate $P_e(\tilde{u}^{\sfh}-I_H\tilde{u}^{\sfh})$, we need to understand first in which norm this approximation should occur. Because our final goal is to approximate $u^\sfh$ in the global energy norm, it is natural to use a local norm on $e$ whose connection to this energy norm could be established. It motivates the following definition that is originally proposed in \cite{hou2015optimal}. 
    
    Let $\tilde{\psi} \in H_{00}^{1/2}(e)$. We also write $\tilde{\psi}$ for its zero extension to $E_H$, which is in $H^{1/2}(E_H)$ by the definition of $H_{00}^{1/2}(e)$. Denote the $a$-harmonic extension of $\tilde{\psi}$ to $\Omega$ by $\psi$, which satisfies \eqref{eqn: edge bulk correspondence} for each $T \in \cT_H$. The support of $\psi$ is the closure of $\rN(e,\cT_H)$. Then, the definition of the norm is give below.
    \begin{definition}
    \label{def: H 1/2 norm}
      The $\cH^{1/2}(e)$ norm of $\tilde{\psi}\in H_{00}^{1/2}(e)$ is defined as:
    \begin{equation}
    \|\tilde{\psi}\|_{\cH^{1/2}(e)}^2:=\int_{\Omega}a|\nabla \psi |^2\, .
    \end{equation}
    For each element $T \sim e$, the $\cH^{1/2}_T(e)$ norm is defined as:
    \begin{equation}
        \|\tilde{\psi}\|_{\cH^{1/2}_T(e)}^2:=\int_{T}a|\nabla \psi |^2\, .
    \end{equation}
    \end{definition}
    That is, the $\cH^{1/2}(e)$ norm of $\tilde{\psi} \in H_{00}^{1/2}(e)$ is defined through the energy norm of its $a$-harmonic extension $\psi \in H^1_0(\Omega)$. Intuitively, if functions on edges can be approximated well in the $\cH^{1/2}(e)$ norm, then it is natural to expect that their $a$-harmonic extensions can also be approximated well. Theorem \ref{thm: Edge coupling error estimate} in the next subsection establishes this intuition in a rigorous way.
    
    To this end, we have to show that Definition \ref{def: H 1/2 norm} makes sense, i.e., this norm is well defined. In fact, we can prove that the $\cH^{1/2}(e)$ norm is equivalent to the $H_{00}^{1/2}(e)$ norm; see Proposition \ref{prop: equivalence of different edge norms}. 
     \begin{proposition}
    \label{prop: equivalence of different edge norms}
    For each edge $e \in \cE_H$, the $\cH^{1/2}(e)$ norm and the $H_{00}^{1/2}(e)$ norm are equivalent, up to a constant independent of the mesh size $H$.
    \end{proposition}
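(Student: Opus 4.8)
The goal is to produce constants $c, C > 0$ depending only on $a_{\min}$, $a_{\max}$, the shape-regularity constants, and $\operatorname{diam}(\Omega)$ — in particular not on $H$ — with $c\,\|\tilde{\psi}\|_{H_{00}^{1/2}(e)}^2 \le \|\tilde{\psi}\|_{\cH^{1/2}(e)}^2 \le C\,\|\tilde{\psi}\|_{H_{00}^{1/2}(e)}^2$. By Definition~\ref{def: H 1/2 norm}, $\|\tilde{\psi}\|_{\cH^{1/2}(e)}^2 = \sum_{T \sim e} \int_T a |\nabla \psi|^2$, and since there are exactly two elements $T \sim e$ for $d=2$, it suffices to prove, for each such $T$, that $\int_T a |\nabla \psi|^2$ is comparable to $\|\tilde{\psi}\|_{H_{00}^{1/2}(e)}^2$. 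Write $w := \tilde{\psi}|_e$ and let $g$ be the extension of $w$ by zero from $e$ to $\partial T$, so that $\psi|_{\partial T} = g$. \textbf{Step 1.} Since $a_{\min} \le a \le a_{\max}$ and $\psi$ minimizes $v \mapsto \int_T a |\nabla v|^2$ over $\{ v \in H^1(T) : v|_{\partial T} = g \}$, comparing $\psi$ with the harmonic extension $\psi_0$ of $g$ to $T$ (the minimizer of $\int_T |\nabla v|^2$) gives $a_{\min} \int_T |\nabla \psi_0|^2 \le \int_T a |\nabla \psi|^2 \le a_{\max} \int_T |\nabla \psi_0|^2$, so it is enough to understand $\int_T |\nabla \psi_0|^2 = \inf \{ \int_T |\nabla v|^2 : v|_{\partial T} = g \}$.

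\textbf{Step 2.} Rescale $T$ to a reference element $\hat{T}$ of unit diameter. In $d = 2$ both the Dirichlet energy $\int_T |\nabla \psi_0|^2$ and the trace seminorm $|g|_{H^{1/2}(\partial T)}^2 := \int_{\partial T} \int_{\partial T} |\xi - \eta|^{-2} |g(\xi) - g(\eta)|^2 \, \rd s(\xi) \, \rd s(\eta)$ are scale-invariant, so it suffices to work on $\hat{T}$. There, the equivalence of the harmonic-extension energy with the $H^{1/2}$-seminorm of the boundary data is classical: the upper bound follows by applying a bounded right inverse of the trace operator to $\hat{g}$ minus its average over $\partial \hat{T}$, and the lower bound from the trace inequality applied to $\hat{v}$ minus its average over $\hat{T}$ together with the Poincar\'e inequality on $\hat{T}$. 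Shape regularity confines the rescaled elements to a compact family, making these constants uniform, so $\int_T |\nabla \psi_0|^2$ is comparable to $|g|_{H^{1/2}(\partial T)}^2$ with constants independent of $H$.

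\textbf{Step 3.} Split the double integral in $|g|_{H^{1/2}(\partial T)}^2$ over $e \times e$, the two mirror pieces $e \times (\partial T \setminus e)$, and $(\partial T \setminus e) \times (\partial T \setminus e)$; the last vanishes since $g \equiv 0$ on $\partial T \setminus e$, leaving
\[
|g|_{H^{1/2}(\partial T)}^2 = \int_e \int_e \frac{|w(\xi) - w(\eta)|^2}{|\xi - \eta|^2} \, \rd \xi \, \rd \eta + 2 \int_e |w(\xi)|^2 \Big( \int_{\partial T \setminus e} \frac{\rd s(\eta)}{|\xi - \eta|^2} \Big) \rd \xi .
\]
An elementary estimate, using that the endpoints of $e$ lie on $\partial T \setminus e$ and that shape regularity keeps the interior angles of $T$ away from $0$ and $\pi$, shows the inner integral is comparable to $\dist(\xi, \partial e)^{-1}$ uniformly in $H$: the portion of $\partial T \setminus e$ emanating from the endpoint of $e$ nearer to $\xi$ supplies both bounds, while the remainder contributes only $O(1/H)$, which is absorbed because $\dist(\xi, \partial e) \le |e|/2 \le \operatorname{diam}(\Omega)/2$. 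Hence $|g|_{H^{1/2}(\partial T)}^2$ is comparable to the sum of the last two terms in the definition~\eqref{eqn: def H 00 1/2 norm} of $\|w\|_{H_{00}^{1/2}(e)}^2$; the first term $\int_e |w|^2$ is then redundant, since $\int_e |w|^2 \le (|e|/2) \int_e \dist(\xi, \partial e)^{-1} |w(\xi)|^2 \, \rd \xi$ with $|e| \le \operatorname{diam}(\Omega)$. Chaining the comparabilities of Steps 1--3 and summing over the two $T \sim e$ yields the claim.

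\textbf{Anticipated main obstacle.} The heart of the argument is Step 2: guaranteeing that the equivalence between the harmonic Dirichlet energy and the $H^{1/2}(\partial T)$ \emph{seminorm} of the boundary trace holds with a constant that does not deteriorate as the element shape varies within the shape-regular class. This forces one to work with the seminorm rather than the full $H^{1/2}$ norm — so that constant boundary data are invisible on both sides and every quantity in the chain is genuinely scale-invariant in $d = 2$ — and it is where shape regularity, through compactness of the rescaled family of elements, is indispensable. Steps 1 and 3 are comparatively routine; the one delicate point in Step 3 is the weighted cross-term estimate, which is precisely the mechanism by which the Lions--Magenes weight $\dist(x, \partial e)^{-1}$ emerges, from extending $w$ by zero across the endpoints of $e$.
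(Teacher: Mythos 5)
Your argument follows essentially the same route as the paper's proof: both begin with the variational characterization of the $\cH^{1/2}_T(e)$ norm, sandwich the $a$-weighted Dirichlet energy between the unweighted one using $a_{\min} \le a \le a_{\max}$, and then invoke the trace-theorem equivalence between the Dirichlet energy and the $H^{1/2}(\partial T)$ norm of the zero-extended boundary data. The only difference is one of self-containedness: the paper cites Tartar (Chapter 33) for the equivalence of $\|\cdot\|_{H_{00}^{1/2}(e)}$ with the $H^{1/2}(\partial T)$ norm of the zero extension, whereas you prove this in Step 3 by splitting the double integral and identifying the cross term with the Lions–Magenes weight; and you explicitly track the $H$-independence of the trace constant in Step 2 by passing to the scale-invariant seminorm and invoking compactness of the shape-regular family, a point the paper leaves implicit. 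These are elaborations rather than a genuinely different argument, and they are correct.
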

     \begin{proof}
     Let $\tilde{\psi} \in H_{00}^{1/2}(e)$. We also write $\tilde{\psi}$ for its zero extension to $E_H$, which is in $H^{1/2}(E_H)$ by the definition of $H_{00}^{1/2}(e)$.
     As in Remark \ref{rmk: H 00 1/2 equivalent to 0 extension H 1/2} and Chapter 33 of \cite{tartar2007introduction}, the $H_{00}^{1/2}(e)$ norm of $\tilde{\psi}$ on $e$ is equivalent to the $H^{1/2}(\partial T)$ norm of $\tilde{\psi}$ on $\partial T$, for any $T\sim e$. Thus, it suffices to show that the $H^{1/2}(\partial T)$ norm and the $\cH^{1/2}_T(e)$ norm are equivalent up to a constant independent of the mesh size $H$.
     
    First, we have a variational characterization for the $\cH^{1/2}_T(e)$ norm. Define the space of $H^1$ functions in $T$ with a boundary value $\tilde{\psi}$ as 
    \[V_{\tilde{\psi}}:=\{v\in H^1(T): v|_{\partial T}=\tilde{\psi}\}\, .\]
    Then, by a simple calculus of variation, we get
    \begin{equation}
        \|\tilde{\psi}\|_{\cH^{1/2}_T(e)}^2=\inf_{v \in V_{\tilde{\psi}}} \int_T a|\nabla v|^2\, .
    \end{equation}
    Thus, using $a_{\min}\leq a \leq a_{\max}$, we arrive at
    \begin{equation}
    \label{eqn: equivalent H 1/2 T and a harmonic extension norm}
        a_{\min}\inf_{v \in V_{\tilde{\psi}}} \int_T |\nabla v|^2 \leq \|\tilde{\psi}\|_{\cH^{1/2}_T(e)}^2 \leq a_{\max} \inf_{v \in V_{\tilde{\psi}}} \int_T |\nabla v|^2\, .
    \end{equation}
    On the other hand, the trace theorem implies that there exists some constant $C_1,C_2$ independent of $H$ such that
    \begin{equation}
    \label{eqn: H 1/2 and harmonic extension norm}
        C_1\inf_{v \in V_{\tilde{\psi}}} \int_T |\nabla v|^2\leq \|\tilde{\psi}\|_{H^{1/2}(\partial T)}^2\leq C_2\inf_{v \in V_{\tilde{\psi}}} \int_T |\nabla v|^2\, .
    \end{equation}
    Combining \eqref{eqn: equivalent H 1/2 T and a harmonic extension norm} and \eqref{eqn: H 1/2 and harmonic extension norm} completes the proof.
    \end{proof}
    
    In this way, we can understand $\cH^{1/2}(e)$ as a variant of the $H_{00}^{1/2}(e)$ norm, while it can additionally incorporate the information of $a$.  The next subsection will build the theory of combining localized approximation in the $\cH^{1/2}(e)$ norm to a global guarantee of accuracy. This theory, in turn, will demonstrate that $\cH^{1/2}(e)$ is the appropriate norm to use for measuring approximation errors.
    \subsection{Integrating Local Error to Global Error} In this subsection, 
    we show that error estimates in the $\cH^{1/2}(e)$ norm on edges can be directly connected to the global approximation error to $u^\sfh$ in the energy norm; see the following Theorem \ref{thm: Edge coupling error estimate}.
    \begin{theorem}[Global error estimate]
        \label{thm: Edge coupling error estimate}
        Suppose for each edge $e$, there exists an edge function $\tilde{v}_e \in H_{00}^{1/2}(e)$ that satisfies \[\|P_e(\tilde{u}^{\sfh}-I_H\tilde{u}^{\sfh})-\tilde{v}_e\|_{\cH^{1/2}(e)} \leq \epsilon_{e}\, .\] 
      We identify $\tilde{v}_e$ with its zero extension  to $E_H$. Let $v_e \in H_0^1(\Omega)$ be the $a$-harmonic extension of $\tilde{v}_e$ to $\Omega$. Then, we have
        \[\|u^{\sfh}-I_Hu^{\sfh}-\sum_{e \in \cE_H} v_e\|^2_{H_a^1(\Omega)}\leq C_{\mathrm{mesh}}\sum_{e \in \cE_H} \epsilon_{e}^2\, ,\]
    \end{theorem}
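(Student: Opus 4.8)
The plan is to reduce the global estimate to the edge-by-edge hypothesis, exploiting the linearity of the $a$-harmonic extension and the bounded overlap of the neighbourhoods $\rN(e,\cT_H)$.

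First I would record the structural fact that $w:=u^{\sfh}-I_Hu^{\sfh}$ is itself in $V^{\sfh}$. Indeed $u^{\sfh}$ is $a$-harmonic in every $T\in\cT_H$ by definition, while $I_Hu^{\sfh}=\sum_{x_i\in\cN_H}u^{\sfh}(x_i)\psi_i$ is a finite linear combination of the functions $\psi_i$, each being the $a$-harmonic extension of a tent function and hence $a$-harmonic in every $T$; so $w$ is $a$-harmonic in every $T$ and lies in $H^1_0(\Omega)$. (The interpolation is legitimate since $u\in C^{\alpha}(\Omega)$, so $\tilde u^{\sfh}=u|_{E_H}$ is continuous.) Consequently $w$ is the $a$-harmonic extension of its edge trace $\tilde w=\tilde u^{\sfh}-I_H\tilde u^{\sfh}$, and, by linearity of the $a$-harmonic extension, $\sum_{e}v_e$ is the $a$-harmonic extension of $\sum_{e}\tilde v_e$. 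Hence $w-\sum_{e}v_e$ is the $a$-harmonic extension of $\tilde\eta:=\tilde w-\sum_{e\in\cE_H}\tilde v_e\in H^{1/2}(E_H)$.

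Next I would split $\tilde\eta$ over the edges. For each $e\in\cE_H$ one has $\tilde\eta|_e=P_e(\tilde u^{\sfh}-I_H\tilde u^{\sfh})-\tilde v_e$, which belongs to $H_{00}^{1/2}(e)$: the first term by Proposition~\ref{prop: C alpha interpolation residue} and the second by hypothesis. Therefore its extension by zero to $E_H$, which I denote $\tilde\eta_e$, is genuinely an element of $H^{1/2}(E_H)$ --- this is precisely the property of the Lions--Magenes space stated in Remark~\ref{rmk: H 00 1/2 equivalent to 0 extension H 1/2}, and it is the one step that would break down for a general $H^{1/2}(e)$ boundary value. Since $\tilde\eta=\sum_{e\in\cE_H}\tilde\eta_e$, linearity of the $a$-harmonic extension gives $w-\sum_{e}v_e=\sum_{e\in\cE_H}\phi_e$, where $\phi_e$ is the $a$-harmonic extension of $\tilde\eta_e$. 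By the support statement preceding Definition~\ref{def: H 1/2 norm}, $\phi_e$ is supported in the closure of $\rN(e,\cT_H)$, and by Definition~\ref{def: H 1/2 norm}, $\|\phi_e\|_{H_a^1(\Omega)}=\|\tilde\eta|_e\|_{\cH^{1/2}(e)}=\|P_e(\tilde u^{\sfh}-I_H\tilde u^{\sfh})-\tilde v_e\|_{\cH^{1/2}(e)}\le\epsilon_e$.

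Finally I would close with a bounded-overlap argument. For almost every $x\in\Omega$ there is a unique $T\in\cT_H$ with $x\in T$, and $\phi_e(x)\neq0$ forces $x\in\overline{\rN(e,\cT_H)}$, i.e.\ $e\sim T$; by shape regularity the number of such edges is at most a mesh constant. Pointwise Cauchy--Schwarz then gives $\bigl|\nabla\sum_{e}\phi_e\bigr|^2\le C_{\mathrm{mesh}}\sum_{e}|\nabla\phi_e|^2$ a.e., and integrating against $a\,\rd x$ yields
\[
\Bigl\|u^{\sfh}-I_Hu^{\sfh}-\sum_{e\in\cE_H}v_e\Bigr\|_{H_a^1(\Omega)}^2
=\Bigl\|\sum_{e\in\cE_H}\phi_e\Bigr\|_{H_a^1(\Omega)}^2
\le C_{\mathrm{mesh}}\sum_{e\in\cE_H}\|\phi_e\|_{H_a^1(\Omega)}^2
\le C_{\mathrm{mesh}}\sum_{e\in\cE_H}\epsilon_e^2 .
\]
I expect the main obstacle to be bookkeeping rather than any single estimate, at two points: the legitimacy of decomposing $\tilde\eta$ edge by edge inside $H^{1/2}(E_H)$ --- exactly what the space $H_{00}^{1/2}(e)$ is built to guarantee --- and the need to sum energies via the support-overlap bound rather than a naive triangle inequality, which would only give the much weaker $\bigl(\sum_{e}\epsilon_e\bigr)^2$ and lose the factor proportional to the number of edges.
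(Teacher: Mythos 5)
Your proposal is correct and follows essentially the same route as the paper: both rest on (i) zero-extension being legitimate exactly because each edge residual lives in $H_{00}^{1/2}(e)$, (ii) linearity of the $a$-harmonic extension to split the error into edge-supported pieces $\phi_e$, and (iii) triangle inequality plus Cauchy--Schwarz with a bounded-overlap constant $C_{\mathrm{mesh}}$. The only cosmetic difference is bookkeeping: the paper carries out the splitting and Cauchy--Schwarz element by element (fix $T$, bound $\|\cdot\|_{H^1_a(T)}$ by $\sum_{e\sim T}\|\cdot\|^2_{\cH^{1/2}_T(e)}$, then swap sums), whereas you define the $\phi_e$ globally on $\Omega$ and invoke the finite-overlap of their supports; these are the same combinatorial estimate in different notation.
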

    where $C_{\mathrm{mesh}}$ is a constant that depends on the mesh type only.
    \begin{proof}[Proof of Theorem \ref{thm: Edge coupling error estimate}]
    We decompose the energy norm into the contribution from each element $T \in \cT_H$:
    \[\|u^{\sfh}-I_Hu^{\sfh}-\sum_{e \in \cE_H} v_e\|^2_{H_a^1(\Omega)}=\sum_{T\in\cT_H}\|u^{\sfh}-I_Hu^{\sfh}-\sum_{e \sim T} v_e\|^2_{H_a^1(T)}\, , \]
    where we have used the fact that $v_e = 0$ in $T$ if $e$ and $T$ are not neighbors.
    
   Let us fix an element $T$. For each $e \sim T$, the trace of the function $u^{\sfh}-I_Hu^{\sfh}-\sum_{e \in T} v_e$ on $e$ is $\tilde{u}^{\sfh}-I_H\tilde{u}^{\sfh}-\tilde{v}_e\in H^{1/2}_{00}(e)$. We can extend this trace to $\partial T\backslash e$ by $0$ to get an $H^{1/2}(\partial T)$ boundary data. Then, this boundary data can be used to define an $a$-harmonic function in $T$. Using the triangle inequality and the Cauchy-Schwarz inequality after squaring both sides, we get
     \[ \|u^{\sfh}-I_Hu^{\sfh}-\sum_{e \sim T} v_e\|^2_{H_a^1(T)}\leq C_{\text{mesh}}\sum_{e \sim T} \|P_e(\tilde{u}^{\sfh}-I_H\tilde{u}^{\sfh})-\tilde{v}_e\|^2_{\cH_T^{1/2}(e)}\, ,\]
     where we have used the definition of the $\cH_T^{1/2}(e)$ norm. The constant $C_{\text{mesh}}$ depends on the mesh type only; for example $C_{\text{mesh}}=4$ for the quadrilateral mesh and $C_{\text{mesh}}=3$ for the triangular mesh.
    Then, we sum the above inequality over all $T \in \cT_H$, which yields
    \begin{equation} \label{eq1}
\begin{split}
 \|u^{\sfh}-I_Hu^{\sfh}-\sum_{e \in \cE_H} v_e\|^2_{H_a^1(\Omega)}&\leq C_{\mathrm{mesh}}\sum_{T\in\cT_H}\sum_{e \sim T} \|P_e(\tilde{u}^{\sfh}-I_H\tilde{u}^{\sfh})-\tilde{v}_e\|^2_{\cH_T^{1/2}(e)} \\
 & =C_{\mathrm{mesh}}\sum_{e \in \cE_H}\|P_e(\tilde{u}^{\sfh}-I_H\tilde{u}^{\sfh})-\tilde{v}_e\|^2_{\cH^{1/2}(e)} \\
 & \leq C_{\mathrm{mesh}}\sum_{e \in \cE_H} \epsilon_{e}^2\, .
\end{split}
\end{equation}
The proof is completed.
    \end{proof}
    Theorem \ref{thm: Edge coupling error estimate} implies that the global error can be localized onto edges. Thus, our target of the enrichment should be to ensure $\|P_e(\tilde{u}^{\sfh}-I_H\tilde{u}^{\sfh})-\tilde{v}_e\|_{\cH^{1/2}(e)} \leq \epsilon_{e}$ for some desired $\epsilon_e$. In the following, we formulate this condition in terms of edge basis functions, which leads to the solution accuracy of the Galerkin method.
     
    First, let us recall that edge basis functions have two parts. The first part is for interpolation; we denote all of them by $\tilde{V}_H^1=\operatorname{span}~\{\psi_i\}_i$, as in Subsection \ref{subsec: interpolation part}. The second part is for enrichment as in Subsection \ref{subsec: enrichment part}. For each edge $e$, the space of enrichment edge basis functions is denoted by $\tilde{V}^{2}_{H,e} \subset H_{00}^{1/2}(e)$; the union over all edges is $\tilde{V}^{2}_{H}=\bigcup_e \tilde{V}^{2}_{H,e} $. Then, the total edge approximation space is $\tilde{V}_H=\tilde{V}_H^1 \bigcup \tilde{V}_H^2 \subset H^{1/2}(E_H)$. The $a$-harmonic extensions of its functions into $\Omega$ constitutes the approximation space $V_H$. Based on Theorem \ref{thm: Edge coupling error estimate}, we get the following theorem:
    \begin{theorem}[Global error estimate of the Galerkin solution]
    \label{thm:Edge coupling error estimate: basis function version} 
    Suppose for each $e$, we have
    \[\min_{\tilde{v}_e \in \tilde{V}^2_{H,e}} \|P_e(u-I_Hu)-\tilde{v}_e\|_{\cH^{1/2}(e)}\leq \epsilon_e\, ,\]
    then using $V_H$ in the weak formulation \eqref{eqn: weak formulation} leads to a solution $u_H$ that satisfies
    \[\|u^\sfh - u_H\|_{H_a^1(\Omega)}^2\leq C_{\mathrm{mesh}}\sum_{e\in \cE_H} \epsilon_e^2\, . \]
    \end{theorem}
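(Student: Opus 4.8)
The plan is to reduce everything to two facts already in hand: the refined Galerkin projection identity \eqref{eqn: refined Galerkin projection}, which applies precisely because $V_H \subset V^{\sfh}$, and the global error estimate of Theorem \ref{thm: Edge coupling error estimate}. The only genuinely new work is to check that the hypothesis of the present theorem, stated with $P_e(u-I_Hu)$, is the same as the hypothesis of Theorem \ref{thm: Edge coupling error estimate}, stated with $P_e(\tilde{u}^{\sfh}-I_H\tilde{u}^{\sfh})$.

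First I would settle this identification. Since $u^{\sfb}$ solves the bubble problem in every $T\in\cT_H$, it vanishes on $\partial T$, hence its trace on $E_H=\bigcup_e e$ is identically zero; thus $u|_{E_H}=u^{\sfh}|_{E_H}=\tilde{u}^{\sfh}$. By the elliptic regularity already invoked in Proposition \ref{prop: C alpha interpolation residue} ($f\in L^2$, $d\le 3$), $u\in C^{\alpha}(\Omega)$, so the nodal values $u(x_i)$ are meaningful and $I_Hu=\sum_i u(x_i)\tilde{\psi}_i$ is well defined; moreover $u(x_i)=u^{\sfh}(x_i)$ because the nodes lie on element boundaries and $u^{\sfb}(x_i)=0$. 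Hence $I_Hu=I_H\tilde{u}^{\sfh}$ on $E_H$, and therefore
\[ P_e(u-I_Hu)=P_e(\tilde{u}^{\sfh}-I_H\tilde{u}^{\sfh}) \qquad \text{for every } e\in\cE_H, \]
so the hypothesis provides, for each $e$, a function $\tilde{v}_e\in\tilde{V}^2_{H,e}$ with $\|P_e(\tilde{u}^{\sfh}-I_H\tilde{u}^{\sfh})-\tilde{v}_e\|_{\cH^{1/2}(e)}\le\epsilon_e$ (attaining, or up to an arbitrarily small amount nearly attaining, the minimum).

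Next I would build the competitor. Identify each $\tilde{v}_e$ with its zero extension to $E_H$, which is legitimate since $\tilde{v}_e\in H_{00}^{1/2}(e)$ (Remark \ref{rmk: H 00 1/2 equivalent to 0 extension H 1/2}), and let $v_e\in H_0^1(\Omega)$ be its $a$-harmonic extension. Put $v:=I_Hu^{\sfh}+\sum_{e\in\cE_H}v_e$. By linearity of the $a$-harmonic extension, $v$ is the $a$-harmonic extension of the edge function $I_H\tilde{u}^{\sfh}+\sum_e\tilde{v}_e$, which is a linear combination of elements of $\tilde{V}_H^1$ and of $\tilde{V}_H^2$; hence $v\in V_H$. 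Applying Theorem \ref{thm: Edge coupling error estimate} with the $\tilde{v}_e$ chosen above gives
\[ \|u^{\sfh}-v\|_{H_a^1(\Omega)}^2 = \Big\| u^{\sfh}-I_Hu^{\sfh}-\sum_{e\in\cE_H}v_e \Big\|_{H_a^1(\Omega)}^2 \le C_{\mathrm{mesh}}\sum_{e\in\cE_H}\epsilon_e^2. \]
Finally I would invoke \eqref{eqn: refined Galerkin projection}: since $V_H\subset V^{\sfh}$ and $v\in V_H$,
\[ \|u^{\sfh}-u_H\|_{H_a^1(\Omega)}^2 = \inf_{w\in V_H}\|u^{\sfh}-w\|_{H_a^1(\Omega)}^2 \le \|u^{\sfh}-v\|_{H_a^1(\Omega)}^2 \le C_{\mathrm{mesh}}\sum_{e\in\cE_H}\epsilon_e^2, \]
which is the assertion. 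I do not expect a real obstacle here; the single step that needs care is the first one, namely justifying that $u$ may replace $\tilde{u}^{\sfh}$ inside both $I_H$ and $P_e(\cdot-I_H\cdot)$, which rests on the zero trace of the bubble part on all element boundaries (and at the nodes) together with the $C^{\alpha}$ regularity that makes nodal interpolation of $u$ well defined.
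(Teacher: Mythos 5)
Your proposal is correct and follows the same route as the paper's one-line proof: it observes $P_e(u-I_Hu)=P_e(\tilde{u}^{\sfh}-I_H\tilde{u}^{\sfh})$, invokes Theorem \ref{thm: Edge coupling error estimate}, and closes with the refined Galerkin identity \eqref{eqn: refined Galerkin projection}. You simply make the implicit details explicit — justifying the identification via the vanishing trace of $u^{\sfb}$ on $E_H$ (and at the nodes, with $C^{\alpha}$ regularity making the nodal interpolation of $u$ meaningful), and constructing the competitor $v=I_Hu^{\sfh}+\sum_e v_e\in V_H$ before applying the best-approximation property — all of which is sound.
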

    \begin{proof}
    The proof is completed by observing that $P_e(u-I_Hu)=P_e(\tilde{u}^{\sfh}-I_H\tilde{u}^{\sfh})$ and using the property \eqref{eqn: refined Galerkin projection} together with Theorem \ref{thm: Edge coupling error estimate}.
    \end{proof}
    Theorem  \ref{thm:Edge coupling error estimate: basis function version} also implies the accuracy for the exact solution $u$; see the remark below.
    \begin{remark}
    \label{rmk: add bubble part}
If we add the bubble part $u^\sfb$, then we get the overall error estimate
\begin{equation}
    \|u-u^\sfb-u_H\|_{H_a^1(\Omega)}^2\leq C_{\mathrm{mesh}}\sum_{e \in \cE_H} \epsilon_e^2\, ,
\end{equation}
and if we do not compute the bubble part, we have the overall error
\begin{equation}
    \|u-u_H\|_{H_a^1(\Omega)}^2\leq C_{\mathrm{mesh}}\sum_{e \in \cE_H} \epsilon_e^2 +CH^2\|f\|_{L^2(\Omega)}^2\, ,
\end{equation}
due to the orthogonality and the estimate $\|u^\sfb\|_{H_a^1(\Omega)}\leq CH\|f\|_{L^2(\Omega)}$.
\end{remark}

In the next section, we will discuss how to choose the space $\tilde{V}^2_{H,e}$ so that it satisfies the above condition and in the meantime, can be efficiently computed using only local information of the equation.
    
    \section{Local Approximation via Oversampling} 
    \label{sec: Local Approximation via Oversampling}
  This section is aimed to study how to choose a local approximation space $\tilde{V}^2_{H,e}$ such that
  \begin{equation}
  \label{eqn: local error indicator}
      \min_{\tilde{v}_e \in \tilde{V}^2_{H,e}} \|P_e(u-I_Hu)-\tilde{v}_e\|_{\cH^{1/2}(e)}\leq \epsilon_e
  \end{equation}
   for some desired $\epsilon_e$. We call $\epsilon_e$ the local error indicator on $e$. For this problem, we can gain useful intuitions by assuming $a$ smooth first. In this case, $u \in H^2(\Omega)$ since $f \in L^2(\Omega)$. If we choose $\tilde{V}^2_{H,e}$ as the space of quadratic polynomials on $e$, then standard results in the Sobolev space will imply that \eqref{eqn: local error indicator} holds with $\epsilon_e=CH\|u\|_{H^2(\omega_e)}$, where $\omega_e$ is some domain in $\Omega$ that contains $e$, and $C$ is a constant independent of $H$ and $u$. Here, $\omega_e$ can be chosen as the closure of $\rN(e,\cT_H)$. For such a choice, the sum  $\sum_{e\in\cE_H} \epsilon_e^2$ yields $O(H^2\|u\|_{H^2(\Omega)}^2)=O(H^2\|f\|_{L^2(\Omega)}^2)$. Hence, finally we get $O(H)$ accuracy in the energy norm.
   
   For the more challenging case $a \in L^{\infty}(\Omega)$, we will also take a similar $\omega_e \supset e$.
   This $\omega_e$ is generally referred to as the \textit{oversampling} domain of the edge $e$; see the next subsection for details. The basic intuition is that for a coarse scale function such as the $a$-harmonic function, its behavior on $e$ can be controlled very well by that in the oversampling region $\omega_e$. Due to this reason, we could construct basis functions to approximate the coarse solution on $e$ in an exponentially efficient manner by using some information of the equation in $\omega_e$. 
   
   Historically, the idea of oversampling was proposed in \cite{hou_multiscale_1997} to reduce the resonance error in MsFEM.
   
    \subsection{Oversampling and Nearly Exponential Convergence}
    
    We consider an oversampling domain $ \omega_e$ for each $e \in \cE_H$. In principle, any $\omega_e$ that contains $e$ 
    can be used. 
    Here, for simplicity of analysis, we set 
    \begin{equation}
    \label{eqn: os domain 1 layer}
        \omega_e=\overline{\bigcup \{T\in \cT_H: \overline{T} \cap e \neq \emptyset\}}\, .
    \end{equation}
    This choice of $\omega_e$ makes $e$ lie in the interior of $\omega_e$ if $e \cap \partial \Omega =\emptyset$.
    We call such $e$ an \textit{interior edge}. If $e \cap \partial \Omega \neq \emptyset$ we call it an \textit{edge connected to the boundary}. An illustration of oversampling domains for a quadrilatenal mesh is in Figure \ref{fig:os domain}.
    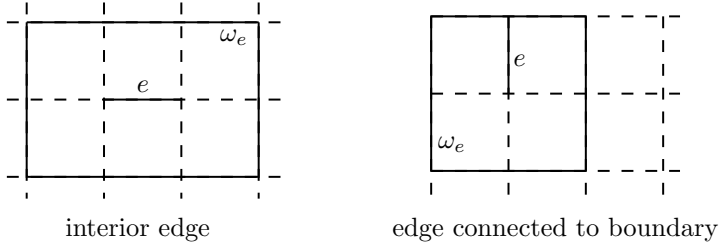
\begin{figure}[t]
        \centering

\tikzset{every picture/.style={line width=0.75pt}} 

\begin{tikzpicture}[x=0.75pt,y=0.75pt,yscale=-1,xscale=1]

\draw  [draw opacity=0][dash pattern={on 4.5pt off 4.5pt}] (60,64) -- (201.5,64) -- (201.5,163) -- (60,163) -- cycle ; \draw  [dash pattern={on 4.5pt off 4.5pt}] (70,64) -- (70,163)(109,64) -- (109,163)(148,64) -- (148,163)(187,64) -- (187,163) ; \draw  [dash pattern={on 4.5pt off 4.5pt}] (60,74) -- (201.5,74)(60,113) -- (201.5,113)(60,152) -- (201.5,152) ; \draw  [dash pattern={on 4.5pt off 4.5pt}]  ;
\draw    (109,113) -- (148,113) ;
\draw    (70,74) -- (187,74) ;
\draw    (187,74) -- (187,152) -- (70,152) -- (70,74) ;
\draw  [draw opacity=0][dash pattern={on 4.5pt off 4.5pt}] (274,71) -- (399.5,71) -- (399.5,162) -- (274,162) -- cycle ; \draw  [dash pattern={on 4.5pt off 4.5pt}] (274,71) -- (274,162)(313,71) -- (313,162)(352,71) -- (352,162)(391,71) -- (391,162) ; \draw  [dash pattern={on 4.5pt off 4.5pt}] (274,71) -- (399.5,71)(274,110) -- (399.5,110)(274,149) -- (399.5,149) ; \draw  [dash pattern={on 4.5pt off 4.5pt}]  ;
\draw    (313,71) -- (313,110) ;
\draw    (274,71) -- (274,149) -- (352,149) -- (352,71) -- cycle ;

\draw (124,102) node [anchor=north west][inner sep=0.75pt]   [align=left] {$\displaystyle e$};
\draw (166,76) node [anchor=north west][inner sep=0.75pt]   [align=left] {$\displaystyle \omega _{e}$};
\draw (88,171) node [anchor=north west][inner sep=0.75pt]   [align=left] {interior edge};
\draw (253,171) node [anchor=north west][inner sep=0.75pt]   [align=left] {edge connected to boundary};
\draw (314,89) node [anchor=north west][inner sep=0.75pt]   [align=left] {$\displaystyle e$};
\draw (276,130) node [anchor=north west][inner sep=0.75pt]   [align=left] {$\displaystyle \omega _{e}$};

\end{tikzpicture}
        \caption{Illustration of oversampling domains}
        \label{fig:os domain}
    \end{figure}
    
    Inspired by the discussions above, we will choose $\epsilon_e$ to depend on some norm of the function $u$ in $\omega_e$. Let us first write $P_e(u-I_Hu)$ in a form that depends explicitly on $\omega_e$. Recall that in Subsection \ref{subsec: Coarse-Fine Decomposition of Solution Space}, we use $u_T^{\sfh}, u_T^{\sfb}$ to denote the local $a$-harmonic part and bubble part of $u$ in $T$. Here, we will use $u_{\omega_e}^\sfh$ and $u_{\omega_e}^\sfb$ for the local $a$-harmonic part and bubble part of $u$ in $\omega_e$, i.e., equation \eqref{eqn: harmonic-bubble splitting} holds with $T$ replaced by $\omega_e$. Then, we have 
    \begin{equation}
    \label{eqn: oversampling, decomposition}
        P_e(u-I_Hu)=P_e(u_{\omega_e}^{\sfh}-I_Hu_{\omega_e}^{\sfh})+ P_e(u_{\omega_e}^{\sfb}-I_Hu_{\omega_e}^{\sfb})\, .
    \end{equation}
    In this way, we express the target function on $e$ using the information of $u$ in the oversampling domain $\omega_e$. Two terms emerge in \eqref{eqn: oversampling, decomposition}, and we shall approximate them separately.
    \subsubsection{Exponential Efficiency for a-Harmonic Functions}
    \label{subsec: Compactness of $a$-Harmonic Functions}
    In the first term of \eqref{eqn: oversampling, decomposition}, $u_{\omega_e}^{\sfh}$ is an $a$-harmonic function in $\omega_e$. Let $U(\omega_e) \subset H^1(\omega_e)$ be the space of all $a$-harmonic functions in $\omega_e$ such that (1) if $\omega_e \cap \partial \Omega \neq \emptyset$, then these functions vanish on $\omega_e \cap \partial \Omega$, and (2) if $\omega_e \cap \partial \Omega = \emptyset$, these functions are identified as equivalent modulus a constant. Clearly, $u_{\omega_e}^{\sfh}$ can be seen as an element in $U(\omega_e)$. When equipped with the $H_a^1(\omega_e)$ norm, we write this space as $(U(\omega_e), \|\cdot\|_{H_a^1(\omega_e)})$; this is a Hilbert space and $\|\cdot\|_{H_a^1(\omega_e)}$ is a norm due to the standard Poincar\'e inequality. For the $H_{00}^{1/2}(e)$ space equipped with the $\cH^{1/2}(e)$ norm, we write $(H_{00}^{1/2}(e),\|\cdot\|_{\cH^{1/2}(e)})$. Consider the operator \[R_e: (U(\omega_e),\|\cdot\|_{H_a^1(\omega_e)}) \to (H_{00}^{1/2}(e),\|\cdot\|_{\cH^{1/2}(e)})\, ,\]
    such that $R_e v=P_e(v-I_Hv)$ for any $v \in (U(\omega_e),\|\cdot\|_{H_a^1(\omega_e)})$. This operator is well defined because $R_e c=0$ for any constant $c \in \bR$. A very important property is that singular values of $R_e$ decay nearly exponentially fast; see the following Theorem \ref{thm: svd exponential decay of Re}. We recall the definition of the constants $c_0$ and $c_1$; they are dependent on the mesh property; see Subsection \ref{subsec:elements}.
     \begin{theorem}
     \label{thm: svd exponential decay of Re}
     For each $e \in \cE_H$, the operator $R_{e}$ is bounded and compact. Let the left singular vectors and singular values pair of $R_{e}$ be $\{\tilde{v}_{e,m},\lambda_{e,m}\}_{m \in \bN}$, in which $\tilde{v}_{e,m} \in H_{00}^{1/2}(e)$ and the sequence $\{\lambda_{e,m}\}_{m \in \bN}$ is in a descending order. Then, for any $\epsilon>0$, there exists an $N_{\epsilon}>0$, such that for all $m > N_{\epsilon}$, it holds that
     \begin{equation}
     \label{eqn: upper bound of singular value}
         \lambda_{e,m}\leq C\exp\left(-m^{(\frac{1}{d+1}-\epsilon)}\right)\, ,
     \end{equation}
     where $N_{\epsilon}, C$ depends on $d, a_{\min}, a_{\max}, c_0$ and $c_1$.
     
    Therefore, if we set $W_{e,m}=\mathrm{span}~\{\tilde{v}_{e,k}\}_{k=1}^{m-1}$ for some $m>N_{\epsilon}$, then we have
    \begin{equation}
    \label{eqn: svd approximation property}
        \min_{\tilde{v}_{e} \in W_{e,m}} \|R_e v-\tilde{v}_{e}\|_{\cH^{1/2}(e)}\leq C\exp\left(-m^{(\frac{1}{d+1}-\epsilon)}\right)\|v\|_{H_a^1(\omega_e)}\, ,
    \end{equation}
    for any $v \in (U(\omega_e),\|\cdot\|_{H_a^1(\omega_e)})$.
    \end{theorem}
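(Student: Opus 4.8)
The plan is to factor $R_e$ through a long chain of ``restrict to a slightly smaller $a$-harmonic space'' operators: each link is compact with only a polynomial (Weyl-type) decay of its approximation numbers, and the submultiplicativity of approximation numbers under composition upgrades this into a nearly exponential decay once the number of links is optimized against the dimension budget. This is essentially the mechanism of \cite{babuska2011optimal}, specialized here to the trace-residue operator on a single edge and to the $\cH^{1/2}(e)$ norm.

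First I would settle boundedness and compactness. If $v\in U(\omega_e)$ then $v$ is $a$-harmonic, hence $v\in C^{\alpha}$ on compact subsets of $\omega_e$ by the De Giorgi--Nash estimate (cf.\ Theorems 8.22 and 8.29 of \cite{gilbarg2015elliptic}), with its $C^\alpha$ norm on such subsets controlled by $\|v\|_{H^1_a(\omega_e)}$. Since the endpoints of an interior edge $e$ lie in the interior of $\omega_e$ (and $v=0$ on any portion of $\partial\Omega$ meeting $\overline{\omega_e}$), the nodal values defining $I_Hv$ are well defined and bounded, so $v-I_Hv$ restricted to $e$ lies in $C^\alpha(e)\cap H^{1/2}(e)$ and vanishes at $\partial e$. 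Proposition \ref{prop: C alpha interpolation residue} then puts $R_e v=P_e(v-I_Hv)$ into $H_{00}^{1/2}(e)$, and combining the trace theorem with the norm equivalence in Proposition \ref{prop: equivalence of different edge norms} yields $\|R_e v\|_{\cH^{1/2}(e)}\le C\|v\|_{H^1_a(\omega_e)}$; compactness will follow a posteriori from the decay of the approximation numbers proved below (it can also be seen directly from the compact embedding $C^\alpha(e)\hookrightarrow H^{1/2}(e)$ together with interior elliptic regularity).

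For the decay estimate, fix $e$ and rescale $\omega_e$ to unit diameter. Choose nested open sets $e\subset\omega^{(0)}\subset\omega^{(1)}\subset\cdots\subset\omega^{(J)}=\omega_e$ with $\overline{\omega^{(j-1)}}\subset\omega^{(j)}$ and each annulus $\omega^{(j)}\setminus\overline{\omega^{(j-1)}}$ of width of order $\rho/J$, where $\rho=\dist(e,\partial\omega_e)$ is of order one by shape regularity (boundary edges are treated analogously using the homogeneous Dirichlet condition on $\omega_e\cap\partial\Omega$). Let $S_j$ be the restriction map from the $a$-harmonic functions on $\omega^{(j)}$ (taken modulo constants when $e$ is interior, and equipped with the $H^1_a$ norm) to those on $\omega^{(j-1)}$; each $S_j$ has norm at most one. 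Testing the equation with a cutoff equal to one on $\omega^{(j-1)}$ and supported in $\omega^{(j)}$ gives the Caccioppoli inequality $\|\nabla u\|_{L^2(\omega^{(j-1)})}\le C\,(J/\rho)\,\|u-c_u\|_{L^2(\omega^{(j)}\setminus\overline{\omega^{(j-1)}})}$ for a suitable constant $c_u$; composing this with the Poincar\'e inequality on the annulus and with the quantitative compactness of $H^1\hookrightarrow L^2$ (whose $n$-th Kolmogorov width is of order $n^{-1/d}$ by the Weyl asymptotics of the Neumann eigenvalues, with scale-invariant constant depending only on $d,c_0,c_1$) yields $a_n(S_j)\le C\,J\,n^{-1/d}$, with $C$ depending only on $d,a_{\min},a_{\max},c_0,c_1$. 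Since $R_e$ equals a bounded operator of norm $\le C$ composed with $S_1\circ\cdots\circ S_J$, the submultiplicativity $a_{n_1+\cdots+n_J-J+1}(S_1\cdots S_J)\le\prod_{j=1}^{J}a_{n_j}(S_j)$ with $n_1=\cdots=n_J=n$ gives
\[
a_m(R_e)\ \le\ C\,(C\,J\,n^{-1/d})^{J},\qquad m\asymp nJ .
\]
Writing $n\asymp m/J$ and choosing $J\asymp m^{\delta}$ with $\delta<\tfrac{1}{d+1}$, the exponent $J\log(C J n^{-1/d})=J\big(\log C+\tfrac{(d+1)\delta-1}{d}\log m\big)$ is of order $-c\,m^{\delta}\log m$ for $m$ large, so $a_m(R_e)\le C\exp(-m^{1/(d+1)-\epsilon})$ for every $\epsilon>0$ and all $m>N_\epsilon$. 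Because $R_e$ is compact between Hilbert spaces, its singular values coincide with its approximation numbers, so $\lambda_{e,m}=a_m(R_e)$ satisfies \eqref{eqn: upper bound of singular value}; and \eqref{eqn: svd approximation property} is the standard optimality of the truncated singular value expansion, since the $\cH^{1/2}(e)$-orthogonal projection of $R_e v$ onto $W_{e,m}=\mathrm{span}\{\tilde v_{e,k}\}_{k=1}^{m-1}$ leaves a residual of norm at most $\lambda_{e,m}\|v\|_{H^1_a(\omega_e)}$.

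The main obstacle is the single-layer estimate $a_n(S_j)\le C\,J\,n^{-1/d}$ with a constant that is genuinely scale invariant and geometry-dependent only through $c_0,c_1$, so that the thin, high-aspect-ratio annuli (and the quotient by constants / the Poincar\'e constants on them) do not degrade it; together with the careful bookkeeping of all constants through the composition so that optimizing over $J$ produces exactly the exponent $\tfrac{1}{d+1}$ rather than something weaker.
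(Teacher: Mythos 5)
Your proof takes essentially the same route as the paper's: a boundedness estimate for the trace--residue map into $\cH^{1/2}(e)$ (the paper's Lemma~\ref{conjecture1}, built from interior H\"older regularity and the norm equivalences of Propositions~\ref{prop: C alpha interpolation residue} and~\ref{prop: equivalence of different edge norms}), composed with the Babu\v{s}ka--Lipton iteration giving nearly exponential decay of singular values of the restriction $P_\omega$ between nested $a$-harmonic spaces (the paper's Lemma~\ref{thm: decay of restriction operator for harmonic functions}, cited from \cite{babuska2011optimal}). The only difference is that you unpack the Caccioppoli--Weyl--submultiplicativity iteration behind that cited lemma rather than taking it as a black box; the final step composing the two and the optimization over the number of layers $J\asymp m^\delta$ are the same.
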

    The proof is deferred to Subsection \ref{subsec: Proof of The Exponential Decay}. The approximation property \eqref{eqn: svd approximation property} can be seen as a consequence of the decay \eqref{eqn: upper bound of singular value}. We remark that approximation property like \eqref{eqn: svd approximation property} can also be phrased through the language of Kolmogorov's $n$-widths \cite{pinkus2012n, melenk2000n}, as is used in \cite{babuska2011optimal}.
    
    We discuss the implication of this theorem in the following. Taking $v=u_{\omega_e}^{\sfh}$ in \eqref{eqn: svd approximation property} leads to
    \begin{equation}
    \label{eqn: a harmonic part exponential efficiency}
          \min_{\tilde{v}_{e} \in W^m_{e}} \|P_e(u_{\omega_e}^{\sfh}-I_Hu_{\omega_e}^{\sfh})-\tilde{v}_{e}\|_{\cH^{1/2}(e)}\leq C\exp\left(-m^{(\frac{1}{d+1}-\epsilon)}\right)\|u_{\omega_e}^{\sfh}\|_{H_a^1(\omega_e)}\, .
    \end{equation}
    Thus, the space of singular vectors $W_{e,m}$ can approximate the first term in \eqref{eqn: oversampling, decomposition} very well; the approximation error decays nearly exponentially regarding $m$. Moreover, since $R_e$ is a local operator, basis functions in $W_{e,m}$ can be efficiently computed by solving a local singular value decomposition (SVD) problem.
    \begin{remark}
    \label{rmk:choice of oversampling}
    The scalar $N_{\epsilon}$ in Theorem \ref{thm: svd exponential decay of Re} will indeed depend on the relative lengthscale of $e$ and $\omega_e$. Here, because we choose the oversampling domain in a specific form \eqref{eqn: os domain 1 layer} and the mesh is uniform and shape regular, the relative lengthscale can be treated as a constant independent of $H$. In general, if we increase the lengthscale of $\omega_e$ to make it larger, the decay of singular values of $R_e$ will become faster, leading to a smaller $N_{\epsilon}$.
    \end{remark}
    \subsubsection{The Oversampling Bubble Part}
    \label{subsec: The Oversampling Bubble Part}
    In the second term of \eqref{eqn: oversampling, decomposition}, we call $u_{\omega_e}^{\sfb}$ the oversampling bubble part. By definition, this part can be efficiently computed by solving local elliptic equations in $\omega_e$ with right-hand side $f$. Moreover, it is small in the sense that we are able to provide a priori bound of order $H$; see Proposition \ref{prop: os bubble small}.
    \begin{proposition}
For each $e \in \cE_H$, the following estimate holds for the oversampling bubble part:
    \label{prop: os bubble small}
    \[\|P_e(u_{\omega_e}^{\sfb}-I_Hu_{\omega_e}^{\sfb})\|_{\cH^{1/2}(e)}\leq CH\|f\|_{L^2(\omega_e)}\, ,\]
    where $C$ is a constant independent of $u$ and $H$.
    \end{proposition}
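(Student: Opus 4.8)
The plan is to estimate, for each element $T\sim e$, the local norm $\|P_e(u_{\omega_e}^{\sfb}-I_Hu_{\omega_e}^{\sfb})\|_{\cH_T^{1/2}(e)}$ by exhibiting an explicit admissible function in the variational characterization
\[
\|\tilde\psi\|_{\cH_T^{1/2}(e)}^2=\inf\Bigl\{\int_T a|\nabla V|^2 \ :\ V\in H^1(T),\ V|_{\partial T}=\tilde\psi\ \text{(extended by }0\text{ on }\partial T\setminus e)\Bigr\}
\]
already used in the proof of Proposition~\ref{prop: equivalence of different edge norms}, and then summing over the at most $C_{\mathrm{mesh}}$ neighbours of $e$. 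Throughout I write $w:=u_{\omega_e}^{\sfb}-I_Hu_{\omega_e}^{\sfb}$; since $I_H$ reproduces nodal values, $w$ vanishes at every vertex of every element, in particular at the two endpoints $p_0,p_1$ of $e$.

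First I would record the a priori bounds on $u_{\omega_e}^{\sfb}$. As the mesh is uniform and shape regular, $\operatorname{diam}(\omega_e)\le CH$, so \eqref{eqn: bubble part, small} applied on $\omega_e$ gives $\|u_{\omega_e}^{\sfb}\|_{H_a^1(\omega_e)}\le CH\|f\|_{L^2(\omega_e)}$. Rescaling $\omega_e$ to a reference domain of unit size, the Stampacchia $L^\infty$ bound for the zero-Dirichlet problem (valid because $d\le 3$ makes $L^2\hookrightarrow L^q$ for some $q>d/2$) and the interior De~Giorgi--Nash--Moser Hölder estimate (applicable near $p_0,p_1$, which lie well inside $\omega_e$) yield, for some $\alpha\in(0,1)$ depending only on $d,a_{\min},a_{\max}$, the bounds $\|u_{\omega_e}^{\sfb}\|_{L^\infty(\omega_e)}\le CH^{2-d/2}\|f\|_{L^2(\omega_e)}$ and $[u_{\omega_e}^{\sfb}]_{C^\alpha(B_{cH}(p_j))}\le CH^{2-d/2-\alpha}\|f\|_{L^2(\omega_e)}$. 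Standard finite-element interpolation bounds (only $O(1)$ nodes are involved) transfer the same orders to $I_Hu_{\omega_e}^{\sfb}$, so $w$ satisfies $\|\nabla w\|_{L^2(T)}\le CH\|f\|_{L^2(\omega_e)}$, $\|w\|_{L^\infty(T)}\le CH^{2-d/2}\|f\|_{L^2(\omega_e)}$ and $[w]_{C^\alpha(B_{cH}(p_j))}\le CH^{2-d/2-\alpha}\|f\|_{L^2(\omega_e)}$ for each $T\sim e$.

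Next, fix $T\sim e$ and let $\eta_T\colon\overline T\to[0,1]$ be a cutoff equal to $1$ on $e$ and $0$ on $\partial T\setminus e$. Such an $\eta_T$ cannot be continuous at $p_0,p_1$ (where $e$ abuts the rest of $\partial T$), but it can be taken Lipschitz away from $\{p_0,p_1\}$ with $|\nabla\eta_T(x)|\le C/\dist(x,\{p_0,p_1\})$ — an ``angular'' transition near each endpoint joined to a bulk cutoff at scale $H$. Then $V_T:=\eta_T w$ is admissible: its trace on $\partial T$ is $w$ on $e$ and $0$ on $\partial T\setminus e$, i.e.\ the zero extension of $P_e w$, and $V_T\in H^1(T)$ because $w$ vanishes Hölder-continuously at $p_0,p_1$, which tames the $\dist^{-1}$ singularity of $\nabla\eta_T$ there (a finite point set carries no $H^1$-capacity in dimension $d\ge2$). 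Estimating $\int_T|\nabla V_T|^2\le 2\int_T|\nabla w|^2+2\int_T w^2|\nabla\eta_T|^2$: the first term is $\le CH^2\|f\|_{L^2(\omega_e)}^2$ by the previous paragraph; in the second, away from $\{p_0,p_1\}$ one uses $|\nabla\eta_T|\le C/H$ together with $\|w\|_{L^\infty(T)}$ and $|T|\sim H^d$, giving $CH^{-2}\cdot H^{4-d}\cdot H^{d}\|f\|^2=CH^2\|f\|^2$, while near $p_j$ the integrand is $\le C[w]_{C^\alpha}^2|x-p_j|^{2\alpha-2}$, integrable since $2\alpha-2+d>0$, with $\int_{B_{cH}(p_j)}|x-p_j|^{2\alpha-2}\,\rd x\le CH^{2\alpha+d-2}$, so this piece is $\le C[w]_{C^\alpha}^2H^{2\alpha+d-2}\le CH^2\|f\|^2$. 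Hence $\|P_e(u_{\omega_e}^{\sfb}-I_Hu_{\omega_e}^{\sfb})\|_{\cH_T^{1/2}(e)}^2\le a_{\max}\int_T|\nabla V_T|^2\le CH^2\|f\|_{L^2(\omega_e)}^2$; summing over $T\sim e$ and taking square roots finishes the proof.

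The main obstacle I anticipate is the analysis at the two endpoints of $e$: one is forced to use a discontinuous cutoff, and the $\dist^{-1}$ blow-up of its gradient can only be absorbed using the interior Hölder regularity of the oversampling bubble — which is exactly where the restriction $d\le 3$ enters — all while tracking the $H$-power of every constant so that the final bound is sharply $O(H)\|f\|_{L^2(\omega_e)}$. A cosmetically cleaner variant is to first replace $w$ by $\phi-I_H\phi$, where $\phi=(u_{\omega_e}^{\sfb})^{\sfh}$ is the element-wise $a$-harmonic part of $u_{\omega_e}^{\sfb}$ inside $\omega_e$ (legitimate since $P_e(u_{\omega_e}^{\sfb}-I_Hu_{\omega_e}^{\sfb})=P_e(\phi-I_H\phi)$, the bubble part being zero on $e$ and at all nodes); the maximum principle then bounds $\|\phi\|_{L^\infty}$ by $\|u_{\omega_e}^{\sfb}\|_{L^\infty}$ without Stampacchia, but the endpoint Hölder estimate is still required.
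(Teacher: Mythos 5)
Your proof is correct for interior edges and takes a genuinely different route from the paper. The paper obtains the proposition as an immediate corollary of the general estimate in Lemma~\ref{conjecture1}, substituting $v=u^{\sfb}_{\omega_e}$ and using $\|u^{\sfb}_{\omega_e}\|_{H^1_a(\omega_e)}\le CH\|f\|_{L^2(\omega_e)}$; that lemma is in turn proved by rescaling, invoking the interior H\"older estimate (Theorem~8.22 in \cite{gilbarg2015elliptic}), and then bounding the three terms of the intrinsic $H^{1/2}_{00}(e)$ norm \eqref{eqn: def H 00 1/2 norm} directly from the $C^\alpha$ seminorm, via the norm equivalence of Proposition~\ref{prop: equivalence of different edge norms}. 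You instead work from the other half of that same proposition, the variational (minimizing-extension) characterization of $\cH_T^{1/2}(e)$: after deriving the $L^\infty$ and $C^\alpha$ scaling bounds for $u_{\omega_e}^{\sfb}$, you exhibit an explicit admissible $H^1$ extension $V_T=\eta_T w$ with an angular cutoff blowing up like $\dist(x,\{p_0,p_1\})^{-1}$, and show the H\"older decay of $w$ at the endpoints absorbs that singularity. Both routes hinge on the same a~priori regularity of the oversampling bubble and on Proposition~\ref{prop: equivalence of different edge norms}; the paper's version is leaner because Lemma~\ref{conjecture1} does double duty in the proof of Theorem~\ref{thm: svd exponential decay of Re}, while yours is self-contained and makes every power of $H$ fully explicit, which is a useful sanity check.

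There is one genuine gap. You invoke the \emph{interior} De~Giorgi--Nash--Moser estimate at $p_0,p_1$ on the grounds that they ``lie well inside $\omega_e$,'' but this fails for an edge connected to the boundary: there $e\cap\partial\Omega$ is a single point, which is one of the two endpoints and sits on $\partial\Omega\cap\partial\omega_e$, not in the interior. The required substitute is the global (up-to-the-boundary) H\"older estimate, Theorem~8.29 of \cite{gilbarg2015elliptic} -- exactly the point the paper flags in Subsection~\ref{subsec: Proof of The Exponential Decay1}. It applies here because $u^{\sfb}_{\omega_e}$ vanishes on $\partial\omega_e$ and the boundary is Lipschitz; once that substitution is made, the remainder of your argument (the cutoff construction, the endpoint capacity remark, and the $H$-power bookkeeping) carries over unchanged.
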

    The proof of Proposition \ref{prop: os bubble small} is deferred to Subsection \ref{subsec: Proof of The Exponential Decay}.
    \subsubsection{Exponentially Efficient Local Approximation Spaces} Based on Subsections \ref{subsec: Compactness of $a$-Harmonic Functions} and \ref{subsec: The Oversampling Bubble Part}, we will select the local approximation space as
    \[\tilde{V}_{H,e,m}^2:=W_{e,m} \bigcup \{P_e(u_{\omega_e}^{\sfb}-I_Hu_{\omega_e}^{\sfb})\}\, .\]
    This space can be locally computed by SVD and solving an elliptic equation. Due to \eqref{eqn: oversampling, decomposition} and \eqref{eqn: oversampling, decomposition}, we have the error estimate:
    \[\min_{\tilde{v}_e \in \tilde{V}^2_{H,e,m}} \|P_e(u-I_Hu)-\tilde{v}_e\|_{\cH^{1/2}(e)}\leq C\exp\left(-m^{(\frac{1}{d+1}-\epsilon)}\right)\|u_{\omega_e}^{\sfh}\|_{H_a^1(\omega_e)}\, .\]
    Hence, the local error indicator is $\epsilon_e=C\exp\left(-m^{(\frac{1}{d+1}-\epsilon)}\right)\|u_{\omega_e}^{\sfh}\|_{H_a^1(\omega_e)}$. By Theorem \ref{thm:Edge coupling error estimate: basis function version}, summing these errors leads to the final error estimate of the coarse part $u^\sfh$; see Proposition \ref{prop: exponential error estimate}. 

    \begin{proposition}
    \label{prop: exponential error estimate}
    Let the space of the enrichment part $\tilde{V}^{2}_{H,m}=\bigcup_e \tilde{V}^{2}_{H,e,m} $. The total edge approximation space is $\tilde{V}_{H,m}=\tilde{V}_H^1 \bigcup \tilde{V}_{H,m}^2 \subset H^{1/2}(E_H)$. Let $V_{H,m}$ constitute of $a$-harmonic extensions of functions in $\tilde{V}_{H,m}$ into $\Omega$. Then, using $V_{H,m}$ in the weak formulation \eqref{eqn: weak formulation} leads to a solution $u_{H,m}$ that satisfies
    \[\|u^\sfh - u_{H,m}\|_{H_a^1(\Omega)} \leq C'\exp\left(-m^{(\frac{1}{d+1}-\epsilon)}\right)\|f\|_{L^2(\Omega)}\, , \]
    where $C'$ is a constant independent of $u, m$ and $H$.
    \end{proposition}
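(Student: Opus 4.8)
The plan is to chain together the three results already proved: the global-to-edge estimate (Theorem~\ref{thm:Edge coupling error estimate: basis function version}), the nearly exponential decay of singular values of $R_e$ (Theorem~\ref{thm: svd exponential decay of Re}, specialized to $v=u_{\omega_e}^{\sfh}$ as in \eqref{eqn: a harmonic part exponential efficiency}), and the $O(H)$ bound on the oversampling bubble part (Proposition~\ref{prop: os bubble small}). First I would record that, by the splitting \eqref{eqn: oversampling, decomposition} together with $P_e(u_{\omega_e}^{\sfb}-I_Hu_{\omega_e}^{\sfb})\in\tilde V^2_{H,e,m}$ and \eqref{eqn: a harmonic part exponential efficiency}, one gets the per-edge bound
\[
\min_{\tilde v_e\in\tilde V^2_{H,e,m}}\|P_e(u-I_Hu)-\tilde v_e\|_{\cH^{1/2}(e)}\le C\exp\!\left(-m^{(\frac{1}{d+1}-\epsilon)}\right)\|u_{\omega_e}^{\sfh}\|_{H_a^1(\omega_e)},
\]
so Theorem~\ref{thm:Edge coupling error estimate: basis function version} applies with $\epsilon_e=C\exp(-m^{(\frac1{d+1}-\epsilon)})\|u_{\omega_e}^{\sfh}\|_{H_a^1(\omega_e)}$, giving
\[
\|u^\sfh-u_{H,m}\|_{H_a^1(\Omega)}^2\le C_{\mathrm{mesh}}\,C^2\exp\!\left(-2m^{(\frac{1}{d+1}-\epsilon)}\right)\sum_{e\in\cE_H}\|u_{\omega_e}^{\sfh}\|_{H_a^1(\omega_e)}^2.
\]

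The remaining work is to bound $\sum_{e\in\cE_H}\|u_{\omega_e}^{\sfh}\|_{H_a^1(\omega_e)}^2$ by $C\|f\|_{L^2(\Omega)}^2$ with a constant independent of $H$. I would argue as follows. Since $u_{\omega_e}^{\sfh}$ is the $a$-harmonic part of $u$ on $\omega_e$, it is the energy-orthogonal projection of $u|_{\omega_e}$ onto $a$-harmonic functions, hence $\|u_{\omega_e}^{\sfh}\|_{H_a^1(\omega_e)}\le\|u\|_{H_a^1(\omega_e)}$. Then the sum over $e$ is controlled by $\sum_{e}\|u\|_{H_a^1(\omega_e)}^2$, and because each $\omega_e$ is a union of the (boundedly many) elements touching $e$, and each element $T$ belongs to only a bounded number of oversampling patches $\omega_e$ (a finite-overlap property that follows from shape regularity and uniformity of the mesh, with the overlap constant depending only on $c_0,c_1,d$), we get $\sum_e\|u\|_{H_a^1(\omega_e)}^2\le C_{\mathrm{ov}}\|u\|_{H_a^1(\Omega)}^2$. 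Finally the global a priori bound $\|u\|_{H_a^1(\Omega)}\le\|u\|_{H_a^1(\Omega)}$—more precisely $\int_\Omega a|\nabla u|^2=\int_\Omega fu\le\|f\|_{L^2}\|u\|_{L^2}\le C_P\|f\|_{L^2}\|u\|_{H_a^1(\Omega)}/\sqrt{a_{\min}}$ via Poincar\'e—yields $\|u\|_{H_a^1(\Omega)}\le C\|f\|_{L^2(\Omega)}$. Combining these bounds, taking square roots, and absorbing the factor $2$ in the exponent into the $\epsilon$ (or into $C'$) gives the claimed estimate with $C'$ depending only on $d,a_{\min},a_{\max},c_0,c_1$, hence independent of $u,m,H$.

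The main obstacle, and the only step requiring care, is the finite-overlap count together with the book-keeping of how the $\epsilon$ in the exponent propagates. For the overlap: one must check that for our specific choice \eqref{eqn: os domain 1 layer} of $\omega_e$ (one layer of elements around $e$), a fixed element $T$ lies in $\omega_e$ only for edges $e$ that are within two ``mesh layers'' of $T$, and shape regularity plus mesh uniformity bound the number of such edges by a constant; this is where $c_0$ and $c_1$ enter. For the exponent: the estimate \eqref{eqn: svd approximation property} already absorbs constants and the patch-lengthscale into $C$ and $N_\epsilon$ (Remark~\ref{rmk:choice of oversampling}), so the $\exp(-2m^{(\frac1{d+1}-\epsilon)})$ can simply be rewritten as $\exp(-m^{(\frac1{d+1}-\epsilon')})$ for a slightly larger $\epsilon'$, or one may keep the factor $2$ inside $C'$ at the cost of a worse constant; either way the stated form is obtained. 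I would also note for completeness that the bubble contribution on each edge is not merely bounded but is included exactly in $\tilde V^2_{H,e,m}$, so it contributes nothing to $\epsilon_e$; thus the entire error budget is the $a$-harmonic term, and the proof reduces cleanly to the two ingredients above.
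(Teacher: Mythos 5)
Your proof is correct and follows essentially the same route as the paper: per-edge error indicator $\epsilon_e$, the global-to-edge coupling theorem, the bound $\|u_{\omega_e}^{\sfh}\|_{H_a^1(\omega_e)}\le\|u\|_{H_a^1(\omega_e)}$ via energy-orthogonal projection, finite overlap of the oversampling patches, and the elliptic a priori bound. One small remark: after taking square roots of the squared estimate, $\sqrt{\exp\left(-2m^{(\frac{1}{d+1}-\epsilon)}\right)}=\exp\left(-m^{(\frac{1}{d+1}-\epsilon)}\right)$ exactly, so the factor of $2$ vanishes cleanly and there is no need to enlarge $\epsilon$ or absorb anything into $C'$.
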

    \begin{proof}
    Based on Theorem \ref{thm:Edge coupling error estimate: basis function version}, we have
    \[ \|u^\sfh - u_{H,m}\|_{H_a^1(\Omega)}^2 \leq C_{\mathrm{mesh}}\sum_{e \in \cE_H} \epsilon_e^2 \, . \]
    We can bound the sum of the local errors:
    \begin{align*}
    \sum_{e \in \cE_H} \epsilon_e^2&=C^2\sum_{e \in \cE_H} \exp\left(-2m^{(\frac{1}{d+1}-\epsilon)}\right)\|u_{\omega_e}^{\sfh}\|_{H_a^1(\omega_e)}^2\\     
    &\leq C^2\sum_{e \in \cE_H} \exp\left(-2m^{(\frac{1}{d+1}-\epsilon)}\right)\|u\|_{H_a^1(\omega_e)}^2\\
    &\leq C^2C_1\exp\left(-2m^{(\frac{1}{d+1}-\epsilon)}\right)\|u\|_{H_a^1(\Omega)}^2\, ,
    \end{align*}
    where we have used the fact that though different oversampling domains $\omega_e$ can have overlapping, each element shall be only counted by a finite number of times bounded by a constant independent of the mesh size $H$, for our choice of domain in \eqref{eqn: os domain 1 layer}. 
    
    Finally, the proof is completed by using the elliptic estimate \[\|u\|^2_{H_a^1(\Omega)}\leq C_2\|f\|_{L^2(\Omega)}^2\] for some $C_2$ independent of $u$ and $H$, and the constant $C'=C\sqrt{C_{\mathrm{mesh}}C_1C_2}$.
    \end{proof}
    
    Note that $\operatorname{dim}(\tilde{V}^{2}_{H,e,m})=m$, so in Proposition \ref{prop: exponential error estimate} we have achieved nearly exponential convergence of the approximation error with respect to computational degrees of freedom. We provide several remarks below. We use $C$ to represent a generic constant that is independent of $u, m$ and $H$.
    \begin{remark}
    Different choices of the oversampling domains shall lead to different constant $C_1$ in the proof of Proposition \ref{prop: exponential error estimate}. In general, if we increase the lengthscale of $\omega_e$ to make the oversampling domain larger, the constant $C_1$ shall become larger, while in Remark \ref{rmk:choice of oversampling} we will get a smaller $N_{\epsilon}$. So in general there would be a trade-off between the decay of singular values and the overlapping information in our choice of the oversampling domains.
    \end{remark}
    \begin{remark}
    \label{rmk: with or without bubble}
    As in Remark \ref{rmk: add bubble part}, if we add the bubble part $u^\sfb$, then the overall accuracy will be
    \begin{equation}
    \label{eqn: error estimate adaptive}
    \|u - u_{H,m}-u^\sfb\|_{H_a^1(\Omega)} \leq C\exp\left(-m^{(\frac{1}{d+1}-\epsilon)}\right)\|f\|_{L^2(\Omega)}\, ;    
    \end{equation}
    that is, we get nearly exponential accuracy for approximating the solution $u$. If we do not add the bubble part, we obtain
    \[\|u - u_{H,m}\|_{H_a^1(\Omega)} \leq (C\exp\left(-m^{(\frac{1}{d+1}-\epsilon)}\right)+CH)\|f\|_{L^2(\Omega)}\, . \]
    \end{remark}
    \begin{remark}
    The error estimate in \eqref{eqn: error estimate adaptive} explains the adaptivity of our edge basis functions. For general $f\in L^2(\Omega)$, the exponential decay of the approximation error cannot be achieved if we do not adapt the method to the right hand side $f$. In connection to the adaptive finite element method, the error indicator function in our method is $\epsilon_e$ for each edge $e$, which will decrease as we increase the number of edge basis functions. We remark that it is also possible to truncate the singular values of $R_e$ to some threshold adaptively so that we can have a total control of the local error  $\epsilon_e$; see Remark \ref{rmk: spatial adaptivity}.
    \end{remark}
    \begin{remark}
    \label{rmk: no bubble and os bubble}
    If we do not include $P_e(u_{\omega_e}^{\sfb}-I_Hu_{\omega_e}^{\sfb})$ in our enrichment of edge basis functions, i.e., we use $W_{e,m}$ directly for the enrichment part, then we will get
     \[\min_{\tilde{v}_e \in W_{e,m}} \|P_e(u-I_Hu)-\tilde{v}_e\|_{\cH^{1/2}(e)}\leq C\exp\left(-m^{(\frac{1}{d+1}-\epsilon)}\right)\|u_{\omega_e}^{\sfh}\|_{H_a^1(\omega_e)}+CH\|f\|_{L^2(\omega_e)}\, .\]
     Thus, finally our computation of the coarse part $u^\sfh$ and the exact $u$ both will be subject to an error upper bounded by $(C\exp\left(-m^{(\frac{1}{d+1}-\epsilon)}\right)+CH)\|f\|_{L^2(\Omega)}$. 
     
     Our bound implies that $m=O(\log^{d+1}(1/H))$ suffices for $O(H)$ accuracy of the solution $u$ in the energy norm. To get this level of accuracy, we do not need the information of the right-hand side in constructing $V_H$, and we do not need to solve the fine scale bubble part $u^\sfb$.
     
     We will perform numerical experiments in the next section to demonstrate the importance of using bubble parts 
    for achieving the nearly exponential accuracy.
    \end{remark}
    \begin{remark}
    \label{rmk: spatial adaptivity}
    We can also add some spatial adaptivity to the implementation of this method. In \eqref{eqn: svd approximation property}, we are essentially using the approximation property of the singular vectors of $R_e$:
    \begin{equation}
        \min_{\tilde{v}_{e} \in W_{e,m}} \|R_e v-\tilde{v}_{e}\|_{\cH^{1/2}(e)}\leq \lambda_{e,m} \|v\|_{H_a^1(\omega_e)}\, .
    \end{equation}
    The local error indicator is $\epsilon_e=\lambda_{e,m} \|v\|_{H_a^1(\omega_e)}$. Now, we can choose not to use the analytic upper bound of $\lambda_{e,m}$ in \eqref{eqn: upper bound of singular value} directly; instead, we do SVD of $R_e$ and truncate the spectrum to a desired $m_e$ such that $\lambda_{e,m_e}$ is below some threshold of accuracy. In this way, we can control the local approximation errors on different edges adaptively. This approach has been adopted in \cite{hou2015optimal}. Naturally, it leads to a different number of basis functions $m_e$ for each edge $e$. The final accuracy will be of order $(\max_{e\in \cE_H} \lambda_{e,m_e})\|f\|_{L^2(\Omega)}$ according to the proof of Proposition \ref{prop: exponential error estimate}. Thus, we are able to achieve an overall accuracy that is  adaptive to $\lambda_{e,m_e}$ in the local error indicator. This adaptivity can be very helpful from a practical point of view: when $a(x)$ is smooth in some local region, we expect $\lambda_{e,m_e}$ to be smaller than the corresponding $\lambda_{e,m_e}$ in the region where $a(x)$ is rough, so the number of basis functions in the region where $a(x)$ is smooth can be reduced. Therefore, our method not only provably handles the rough coefficients, but can also adapt to the local smoothness of the coefficients.
    This explains the adaptive nature of our edge basis functions.
    \end{remark}
    \subsection{Connection to Existing Works}
    \label{sec: Connection to Existing Works}
    In this subsection, we discuss the connection of our method to existing works.
    The compactness property of $a$-harmonic functions in Theorem \ref{thm: svd exponential decay of Re} is motivated by the work \cite{babuska2011optimal}, where the compactness of a restriction operator of $a$-harmonic functions on concentric regions is studied and nearly exponential decay of singular values is established.
    
    According to the discussion at the beginning of this section, we can interpret our method as choosing $\epsilon_e$ to depend on the norm $\|u_{\omega_e}^{\sfh}\|_{H_a^1(\omega_e)}$. More precisely, we set $\epsilon_e=\delta_e \|u_{\omega_e}^{\sfh}\|_{H_a^1(\omega_e)}$ for some small $\delta_e$ and seek for a space $\tilde{V}^2_{H,e}$ such that 
     \[\min_{\tilde{v}_e \in \tilde{V}^2_{H,e}} \|P_e(u-I_Hu)-\tilde{v}_e\|_{\cH^{1/2}(e)}\leq \delta_e \|u_{\omega_e}^{\sfh}\|_{H_a^1(\omega_e)}\, .\]
     In such a setting, the optimal way of getting a small $\delta_e$ is to do SVD for the operator $R_e$ with domain $(U(\omega_e),\|\cdot\|_{H_a^1(\omega_e))}$. The oversampling bubble part can also be readily identified by using the decomposition \eqref{eqn: oversampling, decomposition}.
     
     It is possible to use other norms in $\omega_e$. For example, in the work \cite{hou2015optimal}, the norm being used is
    \begin{equation*}
        \|v\|_{\omega_e}^2:= \int_{\omega_e} \left( a |\nabla v_{\omega_e}^{\sfh}|^2+ (v_{\omega_e}^{\sfh})^2 + [\nabla \cdot (a \nabla v)]^2\right)\, ,
    \end{equation*}
    for any $v \in H^1(\omega_e)$. For such a choice, they target at finding a space $\tilde{V}^2_{H,e}$ so that
     \[\min_{\tilde{v}_e \in \tilde{V}^2_{H,e}} \|P_e(u-I_Hu)-\tilde{v}_e\|_{\cH^{1/2}(e)}\leq \delta_e \|u\|_{\omega_e}\, ,\]
    for some small $\delta_e$. In this case, a different operator involving the norm $\|\cdot\|_{\omega_e}$ will be defined, and the corresponding SVD needs to be performed. 
    
    Nevertheless, we believe that the choice $\|u_{\omega_e}^{\sfh}\|_{H_a^1(\omega_e)}$ in this paper is the most natural one that can lead to nearly exponential convergence. It makes the fact explicit that adding an oversampling bubble term $P_e(u_{\omega_e}^{\sfb}-I_Hu_{\omega_e}^{\sfb})$ can guarantee the exponential accuracy in theory. This fact is not apparent using the norm $\|\cdot\|_{\omega_e}$. Indeed, the method in \cite{hou2015optimal} does not lead to a perfect nearly exponential accuracy because the information of $f$ is not incorporated into the construction of edge basis functions.
    \subsection{Proof of the Nearly Exponential Accuracy} 
    \label{subsec: Proof of The Exponential Decay}
    In this subsection, we prove the main results of this section, Theorem \ref{thm: svd exponential decay of Re} and Proposition \ref{prop: os bubble small}. They both contain statements for every edge $e \in \cE_H$. For interior edges and edges connected to the boundary, the treatments will be slightly different. We present the results for interior edges first, and then for edges connected to the boundary.
    
    In both cases, we will begin with two useful lemmas. The first lemma gives an upper bound for $\|P_e (v-I_H v)\|_{\cH^{1/2}(e)}$ by some norm of $v$ in a larger domain that contains $e$. The second lemma demonstrates the decay of singular values of a specific restriction operator acting on $a$-harmonic functions; this lemma appeared first in Theorem 3.3 of \cite{babuska2011optimal}. 
    
    \subsubsection{Interior Edges}
    For an interior edge $e$, the oversampling region $\omega_e$ contains $e$ strictly in the interior. This fact yields the major difference between proofs for interior edges and edges connected to the boundary. The latter case follows a similar argument, and the corresponding proofs will be presented in Subsection \ref{subsec: Proof of The Exponential Decay1}.
    
    First of all, we discuss some geometric relation between $e$ and $\omega_e$ that will be needed in our analysis; Figure \ref{fig:omega omega star} illustrates our ideas for a uniform quadrilaternel mesh, and for more general shape regular mesh the same construction holds. For each interior edge $e$, there exists two concentric rectangles $\omega \subset \omega^*$ with center being the midpoint $m_e$ of $e$, such that $e \subset \omega \subset \omega^* \subset \omega_e$. Namely the center $m_e$ is the center of gravity of $\omega$ and $\omega^*$. We require $\omega^* \cap \partial \Omega = \emptyset$. 
    Moreover, one side of $\omega$ and $\omega^*$ should be parallel to $e$. We introduce three parameters $l_1,l_2,l_3$ to specify and describe the geometry:
    \begin{figure}[t]
        \centering
\tikzset{every picture/.style={line width=0.75pt}} 

\begin{tikzpicture}[x=0.75pt,y=0.75pt,yscale=-1,xscale=1]

\draw  [draw opacity=0][dash pattern={on 4.5pt off 4.5pt}] (60,64) -- (201.5,64) -- (201.5,163) -- (60,163) -- cycle ; \draw  [dash pattern={on 4.5pt off 4.5pt}] (70,64) -- (70,163)(109,64) -- (109,163)(148,64) -- (148,163)(187,64) -- (187,163) ; \draw  [dash pattern={on 4.5pt off 4.5pt}] (60,74) -- (201.5,74)(60,113) -- (201.5,113)(60,152) -- (201.5,152) ; \draw  [dash pattern={on 4.5pt off 4.5pt}]  ;
\draw    (109,113) -- (148,113) ;
\draw    (70,74) -- (187,74) ;
\draw    (187,74) -- (187,152) -- (70,152) -- (70,74) ;
\draw  [draw opacity=0][dash pattern={on 4.5pt off 4.5pt}] (282,68) -- (407.5,68) -- (407.5,159) -- (282,159) -- cycle ; \draw  [dash pattern={on 4.5pt off 4.5pt}] (282,68) -- (282,159)(321,68) -- (321,159)(360,68) -- (360,159)(399,68) -- (399,159) ; \draw  [dash pattern={on 4.5pt off 4.5pt}] (282,68) -- (407.5,68)(282,107) -- (407.5,107)(282,146) -- (407.5,146) ; \draw  [dash pattern={on 4.5pt off 4.5pt}]  ;
\draw    (321,68) -- (321,107) ;
\draw    (282,68) -- (282,146) -- (360,146) -- (360,68) -- cycle ;
\draw  [dash pattern={on 0.84pt off 2.51pt}] (94.5,95) -- (159.5,95) -- (159.5,132) -- (94.5,132) -- cycle ;
\draw  [dash pattern={on 0.84pt off 2.51pt}] (79.75,84.5) -- (174.25,84.5) -- (174.25,142.5) -- (79.75,142.5) -- cycle ;
\draw  [dash pattern={on 0.84pt off 2.51pt}] (304.5,68) -- (336.5,68) -- (336.5,126) -- (304.5,126) -- cycle ;
\draw  [dash pattern={on 0.84pt off 2.51pt}] (292.5,67.5) -- (347.5,67.5) -- (347.5,139) -- (292.5,139) -- cycle ;

\draw (124,103) node [anchor=north west][inner sep=0.75pt]   [align=left] {$\displaystyle e$};
\draw (160,59) node [anchor=north west][inner sep=0.75pt]   [align=left] {$\displaystyle \omega _{e}$};
\draw (87,172) node [anchor=north west][inner sep=0.75pt]   [align=left] {interior edge};
\draw (257,174) node [anchor=north west][inner sep=0.75pt]   [align=left] {edge connected to boundary};
\draw (323,79) node [anchor=north west][inner sep=0.75pt]   [align=left] {$\displaystyle e$};
\draw (262,122) node [anchor=north west][inner sep=0.75pt]   [align=left] {$\displaystyle \omega _{e}$};
\draw (96,95) node [anchor=north west][inner sep=0.75pt]   [align=left] {$\displaystyle \omega $};
\draw (78.75,84.5) node [anchor=north west][inner sep=0.75pt]   [align=left] {$\displaystyle \omega ^{*}$ };
\draw (288,67) node [anchor=north west][inner sep=0.75pt]   [align=left] {$\displaystyle \omega ^{*}$ };
\draw (301,85) node [anchor=north west][inner sep=0.75pt]   [align=left] {$\displaystyle \omega $};

\end{tikzpicture}
        \caption{Geometric relation $e\subset\omega\subset \omega^*\subset \omega_e$}
        \label{fig:omega omega star}
    \end{figure}
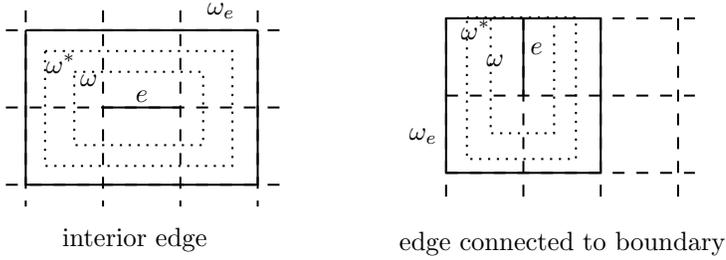
    \begin{enumerate}
        \item With respect to the center $m_e$, the two rectangles $\omega$ and $\omega^*$ are scaling equivalent, such that there exists $l_1>1$, $\omega^*-m_e=l_1\cdot (\omega-m_e)$. For our choice of $\omega_e$, the parameter $l_1$ can be selected to only depend on $c_0$ and $c_1$ in Subsection \ref{subsec:elements}. Here we use the notation that $t\cdot X :=\{tx:x\in X\}$ for a set $X$ and a scalar $t$.
        \item The ratio of $\omega$'s larger side length over the smaller side length is bounded by a uniform constant $l_2>1$ that depends on $c_0$ and $c_1$ only.
        \item There is a constant $l_3 > 1$ depending on $c_0$ and $c_1$ only such that $l_3\cdot e \subset \omega$. 
    \end{enumerate}
      We note that $l_1,l_2,l_3$ are universal constants for all interior edges. All three parameters depend on $c_0,c_1$ only.
    
Given the geometric relation, our first lemma is to bound $\|P_e (v-I_H v)\|_{\cH^{1/2}(e)}$ by some norm of $v$ in $\omega$, for $v \in H^1(\omega)$ and $\nabla \cdot (a\nabla v) \in L^2(\omega)$; see Lemma \ref{conjecture1} below.
    
    \begin{lemma}
\label{conjecture1}
There exists a constant $C$ dependent on $c_0, c_1, a_{\min}, a_{\max}$ such that the following estimate holds:
\begin{equation}
\label{localcon}
\left\|P_e (v-I_H v)\right\|_{\cH^{1/2}(e)}\leq C 
\left(\|v\|_{H_a^1(\omega)} + H\|\nabla \cdot (a\nabla v)\|_{L^2(\omega)}\right) \, ,
\end{equation}
for all $v \in H^1(\omega)$ and $\nabla \cdot (a\nabla v) \in L^2(\omega)$.
\end{lemma}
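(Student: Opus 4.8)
The plan is to first replace the $\cH^{1/2}(e)$ norm by the intrinsic $H^{1/2}_{00}(e)$ norm and then estimate the latter by hand, the only analytic input being interior Hölder regularity of $v$.

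By Proposition~\ref{prop: equivalence of different edge norms} we have $\|P_e(v-I_Hv)\|_{\cH^{1/2}(e)}\le C\,\|P_e(v-I_Hv)\|_{H^{1/2}_{00}(e)}$ with $C$ independent of $H$, so it suffices to control the $H^{1/2}_{00}(e)$ norm of $\tilde\psi:=P_e(v-I_Hv)$. Because the interpolation part consists of linear tent functions, on $e$ one has $\tilde\psi=v|_e-\ell_e$, where $\ell_e$ is the affine function on $e$ that agrees with $v$ at the two endpoints $x_1,x_2$ of $e$; in particular $\tilde\psi$ vanishes at $\partial e$. Set $g:=\nabla\cdot(a\nabla v)\in L^2(\omega)$. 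Since $d\le 3$, $g$ lies in $L^q(\omega)$ with $q=2>d/2$, so the interior De Giorgi--Nash estimate (see, for instance, Theorem~8.22 of \cite{gilbarg2015elliptic}) gives $v\in C^{\alpha}(\overline{\omega'})$ on a slightly shrunk rectangle $\omega'$ with $e\subset\omega'$ and $\overline{\omega'}\subset\omega$ — such an $\omega'$ exists with margins of order $H$ because the geometric parameters $l_1,l_2,l_3$ force $e$ to sit well inside $\omega$ — with $\alpha$ and the constant depending only on $d,a_{\min},a_{\max}$. Normalizing $v$ to have zero mean on $\omega$ (legitimate, since $v\mapsto v+\mathrm{const}$ leaves both sides of \eqref{localcon} unchanged, as $I_H$ reproduces constants), rescaling $\omega$ to unit size (its diameter is of order $H$ by shape regularity), and using the Poincar\'e inequality on $\omega$, this estimate yields
\begin{equation*}
  H^{\alpha}\,[v]_{C^{\alpha}(\omega')}\;+\;|v(x_1)-v(x_2)|\;\le\; C\big(\|\nabla v\|_{L^2(\omega)}+H\|g\|_{L^2(\omega)}\big)\;=:\;CM .
\end{equation*}

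Next I would bound the three terms of $\|\tilde\psi\|_{H^{1/2}_{00}(e)}^2$ in \eqref{eqn: def H 00 1/2 norm}, each by a constant multiple of $M^2$. For the Hardy-type term $\int_e |\tilde\psi(x)|^2/\dist(x,\partial e)\,\rd x$ I would repeat the computation in the proof of Proposition~\ref{prop: C alpha interpolation residue}, now keeping the constants: splitting $e$ at its midpoint and using $\tilde\psi(x_j)=0$ bounds it by $\tfrac{C}{\alpha}[\tilde\psi]_{C^\alpha(e)}^2|e|^{2\alpha}$, and since $[\tilde\psi]_{C^\alpha(e)}\le[v]_{C^\alpha(\omega')}+[\ell_e]_{C^\alpha(e)}$ with $|e|^{\alpha}[\ell_e]_{C^\alpha(e)}=|v(x_1)-v(x_2)|$, the master estimate gives $\le CM^2$ (here $\alpha$, and hence the final constant, depends only on $d,a_{\min},a_{\max}$). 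For the $H^{1/2}(e)$-seminorm, $[\tilde\psi]_{H^{1/2}(e)}\le[v|_e]_{H^{1/2}(e)}+[\ell_e]_{H^{1/2}(e)}$; the trace theorem applied on the Lipschitz piece $T\cap\omega$ of a neighbouring element $T\sim e$ (which has $e$ as a face), combined with Poincar\'e, gives $[v|_e]_{H^{1/2}(e)}\le C\|\nabla v\|_{L^2(\omega)}$ with an $H$-independent constant — in two dimensions both sides are scale invariant — while a direct computation gives $[\ell_e]_{H^{1/2}(e)}=|v(x_1)-v(x_2)|\le CM$. Finally $\|\tilde\psi\|_{L^2(e)}^2\le\tfrac{|e|}{2}\int_e|\tilde\psi|^2/\dist(x,\partial e)\,\rd x\le CHM^2\le CM^2$ since $H$ is bounded. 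Summing the three contributions and using $\|\nabla v\|_{L^2(\omega)}^2\le a_{\min}^{-1}\|v\|_{H_a^1(\omega)}^2$ gives \eqref{localcon}.

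The step I expect to cost the most care is the quantitative interior Hölder estimate: obtaining the displayed master bound with the correct powers of $H$ in front of $\|\nabla v\|_{L^2(\omega)}$ and $\|g\|_{L^2(\omega)}$ via a clean rescaling argument, and verifying that both $\alpha$ and the constant depend only on $d,a_{\min},a_{\max}$, so that — together with the geometric constants $c_0,c_1$ that govern how $e$ nests inside $\omega$ — the final constant is of the advertised form. Everything else is bookkeeping: the norm equivalence is Proposition~\ref{prop: equivalence of different edge norms}, the Hardy term repeats the computation already done in Proposition~\ref{prop: C alpha interpolation residue}, and the seminorm and trace estimates are standard and scale invariant in dimension two.
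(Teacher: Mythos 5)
Your proof follows the same overall strategy as the paper's: pass to the $H^{1/2}_{00}(e)$ norm via Proposition~\ref{prop: equivalence of different edge norms}, invoke the interior De Giorgi--Nash--Moser H\"older estimate (Theorem~8.22 of \cite{gilbarg2015elliptic}), rescale to unit size, and bound the three pieces of \eqref{eqn: def H 00 1/2 norm}. The Hardy term and the $L^2$ term are handled just as in the paper. The two places where you deviate are both genuine improvements rather than a different route. First, you normalize $v$ to have zero mean on $\omega$ before invoking Poincar\'e; this is needed to convert the $\|v\|_{L^2(\omega')}$ appearing on the right of the H\"older estimate into $\|\nabla v\|_{L^2(\omega)}\lesssim\|v\|_{H^1_a(\omega)}$, and the legitimacy of the shift (because $I_H$ reproduces constants on $e$, so $\tilde\psi$ is unchanged) is exactly the point that needs to be made explicit; the paper's own proof ends with $\|v\|_{L^2(\omega')}$ on the right and never closes this gap. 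Second, for the Gagliardo seminorm $\int_e\int_e |w(x)-w(y)|^2/|x-y|^2$, you bypass the pointwise H\"older bound and instead split $\tilde\psi=v|_e-\ell_e$, control $[v|_e]_{H^{1/2}(e)}$ by the trace theorem on an adjacent element (scale-invariant in $d=2$), and compute $[\ell_e]_{H^{1/2}(e)}=|v(x_1)-v(x_2)|$ directly. This is the more robust choice: the paper's step $\int_e\int_e |w(x)-w(y)|^2/|x-y|^2\le 4\,[w]_{C^\alpha(e)}^2$ requires $\alpha>1/2$ for the double integral to be finite, and the De Giorgi--Nash exponent $\alpha$ coming out of Theorem~8.22 has no such lower bound (it can degenerate with the contrast $a_{\max}/a_{\min}$). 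So your version works for all $\alpha\in(0,1)$, whereas the paper's as written does not. One small point to keep in mind is that your master estimate, as you have calibrated the powers of $H$, is specific to $d=2$; that is the case the lemma is stated in, but it is worth flagging so the scaling does not get silently misapplied at $d=3$.
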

\begin{proof}
By construction, $l_3\cdot e\subset \omega$. We rescale the domain such that $e=[-1,1]$. Then $l_3\cdot e=[-l_3,l_3] \subset \omega$ and $m_e=0$ is the center of gravity of the rescaled $\omega$. Since the ratio of $\omega$'s larger side length over the smaller side length is bounded by $l_2$, we get \[\omega':=[-l_3,l_3]\times [-\frac{l_3}{l_2},\frac{l_3}{l_2}]\subset \omega\, .\]
For the rescaled domain, it suffices to prove
\begin{equation}
\label{eqn: target}
\left\|P_e (v-I_H v)\right\|_{\cH^{1/2}(e)}\leq C 
\left(\|v\|_{H_a^1(\omega')} + \|\nabla \cdot (a\nabla v)\|_{L^2(\omega')}\right) \, ,
\end{equation}
for a constant $C$ dependent on $c_0, c_1, a_{\min}, a_{\max}$, where we have utilized the different scaling property of different norms. We will omit $P_e$ in the following when there is no confusion. To prove \eqref{eqn: target}, first, we use 
Theorem 8.22 in \cite{gilbarg2015elliptic} to get a H\"older estimate of $v$:
\begin{equation}
\label{Holderalpha}
\|v\|_{C^\alpha(e)}\leq C\left(\|v\|_{L^2(\omega')}+\|\nabla \cdot (a\nabla v)\|_{L^2(\omega')} \right) \, ,
\end{equation}
   where $\|v\|_{C^\alpha(e)}=\|v\|_{L^{ \infty}(e)}+|v|_{C^\alpha(e)}$ such that 
   $|v|_{C^\alpha(e)}$ is the semi-$C^{\alpha}$ norm:
   \[ |v|_{C^\alpha(e)}:=\sup_{x,y\in e}\frac{|v(x)-v(y)|}{|x-y|^{\alpha}}\, .\]  The parameters $0<\alpha<1$ and $C$ are only dependent on the contrast of the coefficients $a$ and $c_0,c_1$. Our scaling argument is valid since a scaling transformation will not change the contrast of the coefficient $a$. 
   
   Because $I_H v$ is a linear function, it is in $C^1(e)$. We can bound $\|v-I_Hv\|_{L^{\infty}(e)}\leq 2\|v\|_{L^{\infty}(e)}$ and 
   \begin{equation}
   \label{eqn: C alpha w}
    |v-I_Hv|_{C^{\alpha}(e)}\leq |v|_{C^{\alpha}(e)}+|I_Hv|_{C^{\alpha}(e)}\leq |v|_{C^{\alpha}(e)}+2^{2-\alpha}\|v\|_{L^{\infty}(e)}   
   \end{equation}
    where we have used the fact that \[|I_Hv|_{C^{\alpha}(e)}\leq |v(1)-v(-1)|\cdot \sup_{x,y \in e} |x-y|^{1-\alpha} \leq 2^{2-\alpha}\|v\|_{L^{\infty}(e)}\, .\]
   We will use the H\"older estimate of $v-I_Hv$ on $e$ to prove \eqref{eqn: target}. Because the $\cH^{1/2}(e)$ norm and $H_{00}^{1/2}(e)$ are equivalent, we work on the $H_{00}^{1/2}(e)$ norm. Let $w=v-I_Hv$. According to the proof of Proposition \ref{prop: C alpha interpolation residue}, we have
   \begin{equation}
   \label{H00 1/2 norm}
        \|w\|_{H_{00}^{1/2}(e)}^2=\int_e|w(x)|^2\, \rd x+\int_e\int_e\frac{|w(x)-w(y)|^2}{|x-y|^2}\, \rd x\rd y+\int_{e} \frac{|w(x)|^2}{\dist(x,\partial e)}\, \rd x\, .
    \end{equation}
    The first term on the right-hand side of \eqref{H00 1/2 norm} is bounded by the third term, so we only need to upper bound the second and third terms. For the second term, we have 
    \begin{align*}
    \int_e\int_e\frac{|w(x)-w(y)|^2}{|x-y|^2}\, \rd x\rd y &=\int_{-1}^1\int_{-1}^1\frac{|w(x)-w(y)|^2}{|x-y|^2}\, \rd x\rd y\\
    &\leq 4|w|_{C^{\alpha}(e)}^2\\
    &\leq C\left(|v|_{C^{\alpha}(e)}^2+\|v\|_{L^{\infty}(e)}^2\right)\\
    &\leq C\left(\|v\|_{L^2(\omega')}^2+\|\nabla \cdot (a\nabla v)\|_{L^2(\omega')}^2 \right)\, ,
    \end{align*}
    where $C$ is a constant dependent on $\alpha, a_{\min}, a_{\max}$ and we have used \eqref{Holderalpha} and \eqref{eqn: C alpha w} in the last two inequalities. The constant $C$ can vary from line to line. For the third term in \eqref{H00 1/2 norm}, we have
    \begin{align*}
    \int_{e} \frac{|w(x)|^2}{\dist(x,\partial e)}\, \rd x &=\int_{-1}^0 \frac{|w(x)-w(-1)|^2}{|x+1|}\, \rd x+ \int_{0}^1 \frac{|w(x)-w(1)|^2}{|x-1|}\, \rd x\\
    &\leq |w|_{C^{\alpha}(e)}^2\left(\int_{-1}^0 |x+1|^{2\alpha-1}\,\rd x+\int_0^1 |x-1|^{2\alpha-1}\, \rd x \right)\\
    &\leq C|w|_{C^{\alpha}(e)}^2\\
    &\leq C\left(\|v\|_{L^2(\omega')}^2+\|\nabla \cdot (a\nabla v)\|_{L^2(\omega')}^2 \right)\, ,
    \end{align*}
    for some $C$ dependent on $\alpha, a_{\min}, a_{\max}$. Combining the estimates for these two terms, we arrive at
    \[\|w\|_{H_{00}^{1/2}(e)}\leq C\left(\|v\|_{L^2(\omega')}^2+\|\nabla \cdot (a\nabla v)\|_{L^2(\omega')}^2 \right)\, , \]
    for some $C$ dependent on $c_0, c_1, a_{\min}, a_{\max}$. The proof is completed.
\end{proof}

The second lemma is about the decay of singular values of a restriction operator acting on $a$-harmonic functions. Following a similar notation
in Subsection \ref{subsec: Compactness of $a$-Harmonic Functions}, we use $U(\omega^*)$ and $U(\omega)$ for the space of $a$-harmonic functions in $\omega^*$ and $\omega$ modulus a constant, respectively. We write $(U(\omega^*), \|\cdot\|_{H_a^1(\omega^*)})$ and $(U(\omega), \|\cdot\|_{H_a^1(\omega)})$ to be the corresponding spaces modulus a constant and equipped with the energy norm. The restriction operator is defined as 
\begin{equation}
\label{eqn: oversampling restriction operator}
    P_\omega: (U(\omega^*), \|\cdot\|_{H_a^1(\omega^*)}) \to (U(\omega), \|\cdot\|_{H_a^1(\omega)})\, ,
\end{equation}
such that $P_\omega u(x)=u(x)$ for $u \in (U(\omega^*), \|\cdot\|_{H_a^1(\omega^*)})$ and $x \in \omega$. Though not written explicitly, we should have in mind that $\omega$ and $\omega^*$ are associated with the interior edge $e$.
    
    \begin{lemma}
        \label{thm: decay of restriction operator for harmonic functions}
       For each interior edge $e$, the operator $P_{\omega}$ is bounded and compact.  Let the pairs of left singular vectors and singular values of $P_{\omega}$ be $\{v_{e,m},\mu_{e,m} \}_{m \in \bN}$ and the sequence $\{\mu_{e,m}\}_{m \in \bN}$ is in a descending order.  Then for any $\epsilon>0$, there exists an $N_{\epsilon}>0$, such that for all $m > N_{\epsilon}$, it holds that
     \begin{equation}
     \label{eqn: 1upper bound of singular value}
         \mu_{e,m}\leq \exp\left(-m^{(\frac{1}{d+1}-\epsilon)}\right)\, ,
     \end{equation}
     where $N_{\epsilon}$ depends on $d, a_{\min}, a_{\max}, c_0$ and $c_1$.
    
     Therefore, if we set $\Phi_{e,m}=\mathrm{span}~\{v_{e,k}\}_{k=1}^{m-1}$ for some $m>N_{\epsilon}$, then we have
    \begin{equation}
      \min_{\chi \in \Phi_{e,m}}\|P_\omega v-\chi\|_{H_a^1(\omega)}\leq \exp\left(-m^{(\frac{1}{d+1}-\epsilon)}\right) \|v\|_{H_a^1(\omega^*)}\, ,
    \end{equation}
    for any $v \in (U(\omega^*), \|\cdot\|_{H_a^1(\omega^*)})$.    
    \end{lemma}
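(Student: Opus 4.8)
\emph{Proof proposal.} Up to the change of geometry from concentric balls/cubes to our concentric rectangles $\omega\subset\omega^*$, this is Theorem~3.3 of \cite{babuska2011optimal}, and I would reproduce its three ingredients: a Caccioppoli estimate, a compactness argument, and a layered (``telescoping'') decomposition of the restriction operator. Boundedness of $P_\omega$ is immediate, since $\omega\subset\omega^*$ gives $\|P_\omega v\|_{H_a^1(\omega)}=\|v\|_{H_a^1(\omega)}\le\|v\|_{H_a^1(\omega^*)}$, and passing to the quotient by constants only helps. For compactness, fix an intermediate concentric rectangle $\omega\subset\subset\tilde\omega\subset\subset\omega^*$; since $\omega^*\cap\partial\Omega=\emptyset$, $a$ is a genuine $L^\infty$ coefficient on $\omega^*$, and the interior Caccioppoli inequality for $a$-harmonic functions gives, for $v\in U(\omega^*)$ and the best constant $c$, the bound $\|\nabla v\|_{L^2(\omega)}\le C\,\dist(\omega,\partial\tilde\omega)^{-1}\|v-c\|_{L^2(\tilde\omega)}$ with $C=C(a_{\min},a_{\max})$. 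Hence $P_\omega$ factors as the composition of the bounded restriction $U(\omega^*)\to H^1(\tilde\omega)/\mathbb{R}$, the compact Rellich embedding $H^1(\tilde\omega)/\mathbb{R}\hookrightarrow L^2(\tilde\omega)/\mathbb{R}$, and the bounded map $L^2(\tilde\omega)/\mathbb{R}\to U(\omega)$ supplied by the displayed Caccioppoli bound together with $a$-harmonicity; therefore $P_\omega$ is compact and admits a singular value decomposition, whose left singular vectors $v_{e,m}$ automatically lie in $U(\omega)$.

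The decay estimate \eqref{eqn: 1upper bound of singular value} is the crux, and I would obtain it by the layered argument of \cite{babuska2011optimal}. For an integer $n$ to be optimized, interpolate between $\omega$ and $\omega^*$ a chain of $n+1$ concentric, scaling-equivalent rectangles $\omega=\zeta_0\subset\zeta_1\subset\cdots\subset\zeta_n=\omega^*$, chosen so that $\dist(\zeta_{i-1},\partial\zeta_i)\ge c\,H/n$ uniformly in $i$; this is possible because $l_1>1$ is a fixed geometric constant and $\operatorname{diam}(\omega)\simeq H$. Then $P_\omega=P_n\circ\cdots\circ P_1$, where $P_i\colon U(\zeta_i)\to U(\zeta_{i-1})$ is the restriction. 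The heart of the proof is a one-step lemma: there exist a contraction factor $\theta\in(0,1)$ and a constant $C_*$, both depending only on $d,a_{\min},a_{\max},c_0,c_1$, and for each $i$ a subspace $\Psi^{(i)}\subset U(\zeta_{i-1})$ with $\dim\Psi^{(i)}\le C_*n^{d}$ such that
\begin{equation}
\inf_{\chi\in\Psi^{(i)}}\|P_i u-\chi\|_{H_a^1(\zeta_{i-1})}\le\theta\,\|u\|_{H_a^1(\zeta_i)}\qquad\text{for all }u\in U(\zeta_i)\,.
\end{equation}
Composing the $n$ steps and combining the spaces $\Psi^{(i)}$ produces a subspace of dimension $\le C_*n^{d+1}$ on which $P_\omega$ is approximated to within $\theta^n$; choosing $n\simeq m^{1/(d+1)}$ and writing $\theta^{cn}=\exp(-c|\log\theta|\,n)$ yields $\mu_{e,m}\le C\exp\!\big(-c'm^{1/(d+1)}\big)$, and the spare $\epsilon$ in \eqref{eqn: 1upper bound of singular value} absorbs both the constant $C$ and the crude rounding $m\simeq n^{d+1}$ once $m>N_\epsilon$. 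Finally, the last displayed estimate of the lemma is just the Eckart--Young property of the SVD of a compact operator: for $v\in U(\omega^*)$, projecting $P_\omega v$ onto $\Phi_{e,m}=\operatorname{span}\{v_{e,k}\}_{k=1}^{m-1}$ leaves a residual of norm at most $\mu_{e,m}\|v\|_{H_a^1(\omega^*)}$.

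The main obstacle is the one-step lemma, since for rough $a$ no $H^2$ regularity of $a$-harmonic functions is available, so ordinary finite element approximation cannot be invoked directly. The plan is to cover $\zeta_{i-1}$, together with a collar of width $\simeq H/n$, by $\simeq n^{d}$ subcubes $Q$ of side $\simeq H/n$, each sitting inside a moderately enlarged cube $\hat Q\subset\zeta_i$ of comparable size; on each $Q$ one approximates the $a$-harmonic $u$ by a low-order local function (e.g.\ the constant $\bar u_{\hat Q}$), the error being controlled by a factor $\theta<1$ \emph{uniform in} $n$ thanks to the De Giorgi--Nash interior oscillation decay at the fixed enlargement ratio $|\hat Q|/|Q|$, combined with Caccioppoli and a Poincar\'e inequality at scale $H/n$. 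These local pieces must then be reassembled into a single $H^1(\zeta_{i-1})$ function that is $a$-harmonic in $\zeta_{i-1}$ and does not lose the contraction: the natural device is to take the $a$-harmonic extension into $\zeta_{i-1}$ of a trace built from the local data on a curve in the collar, which by energy minimality is no farther from $u$ than the local data themselves; the span of these extensions over the finitely many local configurations is $\Psi^{(i)}$, of dimension $\simeq n^d$. Carrying out this reassembly carefully while preserving $\theta<1$, and tracking the dependence of $\theta$ and $C_*$ on the contrast $a_{\max}/a_{\min}$, is the delicate part; the reduction to $H$-independent geometry afforded by the fixed constants $l_1,l_2,l_3$ is exactly what keeps all implied constants $H$-free. (The lemma is stated for interior edges only; the analogous statement for edges connected to the boundary, where the collar can be taken against $\partial\Omega$, is handled separately in the sequel.)
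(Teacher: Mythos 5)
Your proposal matches the paper's: the paper proves this lemma simply by citing Theorem~3.3 of \cite{babuska2011optimal} and observing that the argument there (the Caccioppoli inequality plus an iterated, layered restriction argument giving a compactness quantification) carries over unchanged when the concentric cubes are replaced by the shape-regular concentric rectangles $\omega\subset\omega^*$ of fixed scaling ratio $l_1$. Your more detailed sketch of the one-step contraction and dimension-counting is a plausible reconstruction of that reference, but the paper itself does not elaborate beyond the citation and the remark that Caccioppoli and iteration are the essential ingredients.
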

    Lemma \ref{thm: decay of restriction operator for harmonic functions} is a generalization of Theorem 3.3 in \cite{babuska2011optimal} into cases where $\omega$ is no longer a cube but a shape regular rectangle. The result remains valid and the proof remains the same, where essentially we use the renowned Caccioppoli inequality and iteration arguments to obtain the quantification of such a compact embedding.

     Now, with Lemma \ref{conjecture1} and Lemma \ref{thm: decay of restriction operator for harmonic functions} in mind, we will prove Theorem \ref{thm: svd exponential decay of Re} and Proposition \ref{prop: os bubble small} below.
      
\begin{proof}[Proof of Theorem  \ref{thm: svd exponential decay of Re}]
First, we use Lemma \ref{thm: decay of restriction operator for harmonic functions} to get an $m-1$ dimensional space $\Phi_{e,m} \subset U(\omega)$, such that for any $v \in U(\omega^*)$, we have
\begin{equation}
\label{kolomogrov}
      \min_{\chi \in \Phi_{e,m}}\|P_\omega v-\chi\|_{H_a^1(\omega)}\leq \exp\left(-m^{(\frac{1}{d+1}-\epsilon)}\right) \|v\|_{H_a^1(\omega^*)}\, .
    \end{equation}
Combined with the fact that $\omega^*\subset \omega_e$, we get that for any $v \in U(\omega_e)$, it holds 
\begin{equation}
\label{kolomogrov1}
      \min_{\chi \in \Phi_{e,m}}\|P_\omega v-\chi\|_{H_a^1(\omega)}\leq \exp\left(-m^{(\frac{1}{d+1}-\epsilon)}\right) \|v\|_{H_a^1(\omega_e)}\, .
    \end{equation}
Setting $v=P_\omega v-\chi$ in Lemma \ref{conjecture1} leads to
\begin{align}
\label{eqn: e to omega}
    \|P_e\left((P_\omega v-\chi)-I_H(P_\omega v-\chi)\right)\|_{\cH^{1/2}(e)}\leq C\|P_\omega v-\chi\|_{H_a^1(\omega)}\, ,
\end{align}
 where we have used the fact that $P_\omega v-\chi$ is $a$-harmonic in $\omega$. We also have the relation
 \begin{equation}
 \label{eqn: equality}
    P_e\left((P_\omega v-\chi)-I_H(P_\omega v-\chi)\right)=P_e(v-I_Hv)-P_e(\chi-I_H\chi)=R_ev-P_e(\chi-I_H\chi)\, .
 \end{equation}
 Let $W_{e,m} = P_e(\Phi_{e,m}-I_H\Phi_{e,m}) \subset H_{00}^{1/2}(e)$. Combining \eqref{kolomogrov1}, \eqref{eqn: e to omega} and \eqref{eqn: equality}, we get 
 \begin{equation}
     \min_{w\in W_{e,m}} \|R_e v-w\|_{\cH^{1/2}(e)}\leq C\exp\left(-m^{(\frac{1}{d+1}-\epsilon)}\right) \|v\|_{H_a^1(\omega_e)}\, ,
 \end{equation}
for all $m > N_{\epsilon}$, where $C$ depends on $c_0, c_1, a_{\min}, a_{\max}$. 
This implies the upper bound of the singular values of $R_e$:
 \begin{equation}
         \lambda^{m}_e\leq C\exp\left(-m^{(\frac{1}{d+1}-\epsilon)}\right)\, .
     \end{equation}
The proof is completed.
\end{proof}
Then, we present the proof for Proposition \ref{prop: os bubble small}.
\begin{proof}[Proof of Proposition \ref{prop: os bubble small}]
Taking $v=u^\sfb_{\omega_e}$ in Lemma \ref{conjecture1}, we get
\begin{equation}
\left\|P_e (u^\sfb_{\omega_e}-I_H u^\sfb_{\omega_e})\right\|_{\cH^{1/2}(e)}\leq C 
\left(\|u^\sfb_{\omega_e}\|_{H_a^1(\omega)} + H\|f\|_{L^2(\omega)}\right) \, .
\end{equation}
The proof is completed by using the elliptic estimate
    $\|u^\sfb_{\omega_e}\|_{H_a^1(\omega_e)}\leq CH\|f\|_{L^2(\omega_e)}$ and the fact $\omega \subset \omega_e$.
\end{proof}

\subsubsection{Edges Connected to Boundary}
    \label{subsec: Proof of The Exponential Decay1}
    For an edge connected to the boundary, i.e. $e \cap \partial \Omega \neq \emptyset$, we have a different geometric relation between $e$ and $\omega_e$; see the right one in Figure \ref{fig:omega omega star}. For this edge, we have that $e\cap \partial \Omega=\{p_e\}$ is a single point. There exists two rectangles $\omega \subset \omega^*$, such that $e \subset \omega \subset \omega^* \subset \omega_e$. One side of $\omega$ and $\omega^*$ is parallel to $e$. We require (1) $\partial \omega^*$ only intersects with $\partial \Omega$ on one of the four edges of the latter; (2) $p_e$ is the center of gravity of $\partial \omega \cap \partial \Omega$ and $\partial \omega^* \cap \partial \Omega$. We introduce three parameters $l_4,l_5,l_6$ to specify and describe the geometry:
    \begin{enumerate}
        \item With respect to the center $p_e$, the two rectangles $\omega$ and $\omega^*$ are scaling equivalent, such that there exists $l_4>1$, $\omega^*-p_e=l_4\cdot (\omega-p_e)$. For our choice of $\omega_e$, the parameter $l_4$ can be selected to only depend on $c_0$ and $c_1$ in Subsection \ref{subsec:elements}.
        \item The ratio of $\omega$'s larger side length over the smaller side length is bounded by a uniform constant $l_5>1$ that depends on $c_0$ and $c_1$ only.
        \item There is a constant $l_6 > 1$ depending on $c_0$ and $c_1$ only such that $l_6\cdot e \subset \omega$.
    \end{enumerate}
      We note that $l_4,l_5,l_6$ are universal constants for all edges connected to the boundary. All three parameters depend on $c_0,c_1$ only.
      
    As in the last section, we have two lemmas that are needed in the proof of the theorem. The first lemma is the same as Lemma \ref{conjecture1}, which also applies for edges connected to the boundary due to the global H\"older estimate in Theorem 8.29 of \cite{gilbarg2015elliptic}. The second lemma is similar to Lemma \ref{thm: decay of restriction operator for harmonic functions}, while here we define $U(\omega^*)$ and $U(\omega)$ to be the spaces of $a$-harmonic functions in $\omega^*$ and $\omega$ that vanish at $\omega^*\cap \partial \Omega$ and $\omega\cap \partial \Omega$, respectively. The restriction operator follows the same definition as \eqref{eqn: oversampling restriction operator}. Regarding its singular values, we have the following lemma:
    \begin{lemma}
        \label{thm: decay of restriction operator for harmonic functions, edge connected to boundary}
       For each edge $e$ connected to boundary, the operator $P_{\omega}$ is bounded and compact.  Let the pairs of left singular vectors and singular values of $P_{\omega}$ be $\{v_{e,m},\mu_{e,m} \}_{m \in \bN}$ and the sequence $\{\mu_{e,m}\}_{m \in \bN}$ is in a descending order.  Then for any $\epsilon>0$, there exists an $N_{\epsilon}>0$, such that for all $m > N_{\epsilon}$, it holds that
     \begin{equation}
         \mu_{e,m}\leq \exp\left(-m^{(\frac{1}{d+1}-\epsilon)}\right)\, ,
     \end{equation}
     where $N_{\epsilon}$ depends on $d, a_{\min}, a_{\max}, c_0$ and $c_1$.
    
     Therefore, if we set $\Phi_{e,m}=\mathrm{span}~\{v_{e,k}\}_{k=1}^{m-1}$ for some $m>N_{\epsilon}$, then we have
    \begin{equation}
      \min_{\chi \in \Phi_{e,m}}\|P_\omega v-\chi\|_{H_a^1(\omega)}\leq \exp\left(-m^{(\frac{1}{d+1}-\epsilon)}\right) \|v\|_{H_a^1(\omega^*)}\, ,
    \end{equation}
    for any $v \in (U(\omega^*), \|\cdot\|_{H_a^1(\omega^*)})$.    
    \end{lemma}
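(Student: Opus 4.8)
To prove this lemma — the edge‑connected‑to‑boundary analogue of Lemma~\ref{thm: decay of restriction operator for harmonic functions} — the plan is to mimic, almost verbatim, the proof of Lemma~\ref{thm: decay of restriction operator for harmonic functions} (and hence of Theorem~3.3 in \cite{babuska2011optimal}): establish a Caccioppoli inequality for the relevant class of $a$-harmonic functions, iterate it along a telescoping chain of concentric rectangles $\omega=D_0\subset D_1\subset\cdots\subset D_K\subset\omega^*$ so that the energy norm contracts by a fixed factor at each link, and then read off the singular value decay of $P_\omega$ from the resulting Kolmogorov-width bound. The only structural novelty compared with the interior case is that functions in $U(\omega^*)$ now carry a homogeneous Dirichlet condition on $\omega^*\cap\partial\Omega$, so every estimate has to be taken up to that flat portion of the boundary; everything else transfers unchanged, which is why the result and its constants $N_\epsilon,C$ end up depending only on $d,a_{\min},a_{\max},c_0,c_1$ through $l_4,l_5,l_6$.

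First I would prove the boundary Caccioppoli inequality. For two of the concentric rectangles $D\subset D'\subset\omega^*$ with $\dist(D,\partial D'\setminus\partial\Omega)\ge\delta$, and $u\in U(\omega^*)$, one tests the weak form of $-\nabla\cdot(a\nabla u)=0$ against $\phi^2u$, where $\phi$ is a Lipschitz cutoff equal to $1$ on $D$, supported in $D'$, with $|\nabla\phi|\le C/\delta$, and \emph{not required to vanish on $\partial\Omega$}. This is admissible because $u=0$ on $\omega^*\cap\partial\Omega$, so $\phi^2u$ still belongs to the correct Dirichlet space; the usual Cauchy--Schwarz manipulation then yields $\|\nabla u\|_{L^2(D)}\le \frac{C}{\delta}\sqrt{a_{\max}/a_{\min}}\,\|u\|_{L^2(D')}$. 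I would also record the companion Poincaré inequality: since $u$ vanishes on a nondegenerate piece of $\partial D'$, namely $\partial D'\cap\partial\Omega$ — this is precisely where the set-up assumptions that $\partial\omega^*$ meets only one side of $\partial\Omega$ and that $p_e$ is centered on $\partial\omega^*\cap\partial\Omega$ are used — one has $\|u\|_{L^2(D')}\le C\operatorname{diam}(D')\,\|\nabla u\|_{L^2(D')}$ with no mean value subtracted, which is exactly why $U(\omega^*)$ is not quotiented by constants in this case. Boundedness of $P_\omega$ is then immediate, and compactness follows from the Caccioppoli bound combined with Rellich--Kondrachov, just as in the interior case.

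Next I would run the iteration. After rescaling so that $\omega$ has unit diameter, the dilation relation $\omega^*-p_e=l_4\cdot(\omega-p_e)$ allows me to insert $K$ concentric rectangles whose successive collars have comparable thickness $\sim (l_4-1)/K$. On the larger rectangle of each link I approximate the $a$-harmonic function by a finite-rank local operator (cell averages, or low-degree piecewise polynomials, on a mesh finer than the collar thickness) with error bounded by (mesh size)$\times\|\nabla u\|$ over a slightly fattened collar; feeding this error into the boundary Caccioppoli inequality on the smaller rectangle of the link produces a contraction of the $H_a^1$ norm by a fixed factor $\le\tfrac12$ per link, for a suitable mesh-to-collar ratio. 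Chaining the $K$ links, the accumulated space $\Phi$ obeys $\min_{\chi\in\Phi}\|u-\chi\|_{H_a^1(\omega)}\le 2^{-K}\|u\|_{H_a^1(\omega^*)}$ while $\dim\Phi$ is controlled by the total cell count. The precise relation between $K$ and $m=\dim\Phi+1$, hence the exponent $\tfrac{1}{d+1}-\epsilon$ and the threshold $N_\epsilon$, are then exactly those obtained in \cite{babuska2011optimal} and in Lemma~\ref{thm: decay of restriction operator for harmonic functions}. Finally, since $\Phi$ realizes this $(m-1)$-width, the Courant--Fischer characterization of the singular values of the compact operator $P_\omega$ gives $\mu_{e,m}\le 2^{-K}\le\exp(-m^{1/(d+1)-\epsilon})$; taking $\Phi_{e,m}=\mathrm{span}\{v_{e,k}\}_{k=1}^{m-1}$ spanned by the associated left singular vectors delivers the stated approximation inequality.

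The step I expect to require the most care is the first one: checking that the cutoff/Caccioppoli argument and the accompanying Poincaré inequality are uniform up to $\omega^*\cap\partial\Omega$, with constants depending on $l_4,l_5,l_6$ (hence on $c_0,c_1$) and not on the mesh size $H$. The geometric conditions imposed in the set-up — $\partial\omega^*$ touching only one side of $\partial\Omega$, one side of each of $\omega$ and $\omega^*$ parallel to $e$, and $p_e$ centered on $\partial\omega\cap\partial\Omega$ and $\partial\omega^*\cap\partial\Omega$ — are exactly what reduces the boundary configuration, after rescaling, to a single fixed reference shape up to the bounded dilations $l_4$--$l_6$, so that the iteration of \cite{babuska2011optimal} applies without modification. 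Everything downstream of the boundary Caccioppoli inequality is then routine.
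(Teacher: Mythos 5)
Your proposal is correct and takes essentially the same route as the paper, which simply invokes Theorem~3.7 of \cite{babuska2011optimal} (the boundary analogue of Theorem~3.3) and notes that the proof carries over to shape-regular rectangles with Dirichlet data; the boundary Caccioppoli inequality, the Friedrichs-type Poincar\'e inequality replacing mean subtraction, and the iteration/width argument you describe are precisely the adaptations that distinguish Theorem~3.7 from Theorem~3.3 in that reference. In short, you have reconstructed in detail the argument that the paper compresses into a citation.
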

    Lemma \ref{thm: decay of restriction operator for harmonic functions, edge connected to boundary} is a generalization of Theorem 3.7 in \cite{babuska2011optimal} into cases where $\omega$ is no longer a cube but a shape regular rectangle and the boundary condition is of Dirichlet's type. The result remains valid and the proof remains the same; the key steps are Caccioppoli inequality and iteration arguments, which lead to a quantitative estimate of this compact embedding. Combining Lemma \ref{conjecture1} and Lemma \ref{thm: decay of restriction operator for harmonic functions, edge connected to boundary} concludes the proof for the boundary part; the argument is the same as that in the proof for the interior edges.
    \section{Numerical Experiments}
    \label{sec: Numerical experiments}
    In this section, we conduct numerical experiments to validate our theoretical analysis and demonstrate the effectiveness of our method.  We will test three representative examples of the coefficient $a(x)$: (1) with multiple scales; (2) drawn from random Gaussian field with no scale separation; (3) with high contrast. 
     \subsection{Three Choices in the Algorithm} \label{subsec: Three Choices in the Algorithm} There can be different choices in our algorithm, depending on whether the bubble part $u^\sfb$ is computed, and whether the oversampling bubble part $P_e(u_{\omega_e}^{\sfb}-I_Hu_{\omega_e}^{\sfb})$ is incorporated into the edge basis functions. We mainly focus on the following three choices. Here, we use $H$ to denote the coarse mesh size and $m$ is the number of left singular vectors of $R_e$ that are used in constructing edge basis functions. We choose $m$ to be the same for every edge $e \in \cE_H$.
    \begin{enumerate}
        \item In the first choice, we compute the Galerkin solution without using $P_e(u_{\omega_e}^{\sfb}-I_Hu_{\omega_e}^{\sfb})$ in the edge basis functions. Moreover, $u^\sfb$ is not computed. The solution depends on $H$ and $m$, and is denoted by $u_{H,m}^{(1)}$. Theoretically, we expect $m=O(\log^{d+1}(1/H))$ leads to $O(H)$ solution accuracy in the energy norm; see Remark \ref{rmk: no bubble and os bubble}. 
        \item In the second choice, we further compute the bubble part $u^\sfb$, and add it to $u_{H,m}^{(1)}$ to get $u_{H,m}^{(2)}=u_{H,m}^{(1)}+u^\sfb$. This reveals one of the benefits of our energy orthogonal design, namely the fine scale component $u^\sfb$ is easy to compute. Our theory in Remark \ref{rmk: with or without bubble} implies that we would still get the same rate as $u_{H,m}^{(1)}$ in the upper bound of the accuracy. However, we will demonstrate that $u_{H,m}^{(2)}$ does lead to much better numerical accuracy compared to $u_{H,m}^{(1)}$, while the additional computational complexity is negligible.
        \item In the third choice, we compute $P_e(u_{\omega_e}^{\sfb}-I_Hu_{\omega_e}^{\sfb})$ and incorporate it into the edge basis functions on $e$. We also compute the bubble part $u^\sfb$ and add it to the Galerkin solution. The final solution is denoted by $u_{H,m}^{(3)}$. Our theory in Remark \ref{rmk: with or without bubble} implies that we should get a nearly exponentially decaying approximation error with respect to $m$. We test its numerical performance when different kinds of $a(x)$ are present.
    \end{enumerate}
    In all the numerical examples, we consider the elliptic problem with homogeneous boundary condition in the domain $\Omega=[0,1]\times [0,1]$. 
    \begin{equation}
    \left\{
    \begin{aligned}
    -\nabla \cdot (a \nabla u )&=f, \quad \text{in} \  \Omega\\
    u&=0, \quad \text{on} \  \partial\Omega\, .
    \end{aligned}
    \right.
    \end{equation}
    We discretize the domain by a uniform two-level quadrilateral mesh; see a fraction of this mesh in Figure \ref{fig:mesh1}, where we also show an edge $e$ and its oversampling domain $\omega_e$ in solid lines. 

\begin{figure}[htbp]
\centering

\tikzset{every picture/.style={line width=0.75pt}} 

\begin{tikzpicture}[x=0.75pt,y=0.75pt,yscale=-1,xscale=1]

\draw  [draw opacity=0][dash pattern={on 4.5pt off 4.5pt}] (170.5,75) -- (365.5,75) -- (365.5,231) -- (170.5,231) -- cycle ; \draw  [color={rgb, 255:red, 0; green, 0; blue, 0 }  ,draw opacity=1 ][dash pattern={on 4.5pt off 4.5pt}] (209.5,75) -- (209.5,231)(248.5,75) -- (248.5,231)(287.5,75) -- (287.5,231)(326.5,75) -- (326.5,231) ; \draw  [color={rgb, 255:red, 0; green, 0; blue, 0 }  ,draw opacity=1 ][dash pattern={on 4.5pt off 4.5pt}] (170.5,114) -- (365.5,114)(170.5,153) -- (365.5,153)(170.5,192) -- (365.5,192) ; \draw  [color={rgb, 255:red, 0; green, 0; blue, 0 }  ,draw opacity=1 ][dash pattern={on 4.5pt off 4.5pt}] (170.5,75) -- (365.5,75) -- (365.5,231) -- (170.5,231) -- cycle ;
\draw  [draw opacity=0][dash pattern={on 0.84pt off 2.51pt}] (170.5,75) -- (365.5,75) -- (365.5,231) -- (170.5,231) -- cycle ; \draw  [color={rgb, 255:red, 0; green, 0; blue, 0 }  ,draw opacity=1 ][dash pattern={on 0.84pt off 2.51pt}] (183.5,75) -- (183.5,231)(196.5,75) -- (196.5,231)(209.5,75) -- (209.5,231)(222.5,75) -- (222.5,231)(235.5,75) -- (235.5,231)(248.5,75) -- (248.5,231)(261.5,75) -- (261.5,231)(274.5,75) -- (274.5,231)(287.5,75) -- (287.5,231)(300.5,75) -- (300.5,231)(313.5,75) -- (313.5,231)(326.5,75) -- (326.5,231)(339.5,75) -- (339.5,231)(352.5,75) -- (352.5,231) ; \draw  [color={rgb, 255:red, 0; green, 0; blue, 0 }  ,draw opacity=1 ][dash pattern={on 0.84pt off 2.51pt}] (170.5,88) -- (365.5,88)(170.5,101) -- (365.5,101)(170.5,114) -- (365.5,114)(170.5,127) -- (365.5,127)(170.5,140) -- (365.5,140)(170.5,153) -- (365.5,153)(170.5,166) -- (365.5,166)(170.5,179) -- (365.5,179)(170.5,192) -- (365.5,192)(170.5,205) -- (365.5,205)(170.5,218) -- (365.5,218) ; \draw  [color={rgb, 255:red, 0; green, 0; blue, 0 }  ,draw opacity=1 ][dash pattern={on 0.84pt off 2.51pt}] (170.5,75) -- (365.5,75) -- (365.5,231) -- (170.5,231) -- cycle ;
\draw    (209.5,114) -- (209.5,192) -- (326.5,192) -- (326.5,114) -- cycle ;

\draw [color={rgb, 255:red, 0; green, 0; blue, 0 }  ,draw opacity=1 ][line width=0.75]    (248.5,153) -- (287.5,153) ;
\draw  [draw opacity=0][dash pattern={on 0.84pt off 2.51pt}] (407.5,157) -- (507.5,157) -- (507.5,218) -- (407.5,218) -- cycle ; \draw  [color={rgb, 255:red, 0; green, 0; blue, 0 }  ,draw opacity=1 ][dash pattern={on 0.84pt off 2.51pt}] (417.5,157) -- (417.5,218)(430.5,157) -- (430.5,218)(443.5,157) -- (443.5,218)(456.5,157) -- (456.5,218)(469.5,157) -- (469.5,218)(482.5,157) -- (482.5,218)(495.5,157) -- (495.5,218) ; \draw  [color={rgb, 255:red, 0; green, 0; blue, 0 }  ,draw opacity=1 ][dash pattern={on 0.84pt off 2.51pt}] (407.5,167) -- (507.5,167)(407.5,180) -- (507.5,180)(407.5,193) -- (507.5,193)(407.5,206) -- (507.5,206) ; \draw  [color={rgb, 255:red, 0; green, 0; blue, 0 }  ,draw opacity=1 ][dash pattern={on 0.84pt off 2.51pt}]  ;
\draw  [draw opacity=0][dash pattern={on 4.5pt off 4.5pt}] (406.5,73) -- (508.5,73) -- (508.5,135) -- (406.5,135) -- cycle ; \draw  [color={rgb, 255:red, 0; green, 0; blue, 0 }  ,draw opacity=1 ][dash pattern={on 4.5pt off 4.5pt}] (416.5,73) -- (416.5,135)(455.5,73) -- (455.5,135)(494.5,73) -- (494.5,135) ; \draw  [color={rgb, 255:red, 0; green, 0; blue, 0 }  ,draw opacity=1 ][dash pattern={on 4.5pt off 4.5pt}] (406.5,83) -- (508.5,83)(406.5,122) -- (508.5,122) ; \draw  [color={rgb, 255:red, 0; green, 0; blue, 0 }  ,draw opacity=1 ][dash pattern={on 4.5pt off 4.5pt}]  ;

\draw (263.5,140) node [anchor=north west][inner sep=0.75pt]   [align=left] {$\displaystyle e$};
\draw (303.5,115) node [anchor=north west][inner sep=0.75pt]   [align=left] {$\displaystyle \omega _{e}$};
\draw (414,136) node [anchor=north west][inner sep=0.75pt]   [align=left] {coarse mesh};
\draw (424,218) node [anchor=north west][inner sep=0.75pt]   [align=left] {fine mesh};

\end{tikzpicture}

\caption{Two level mesh: a fraction}
\label{fig:mesh1}
\end{figure}
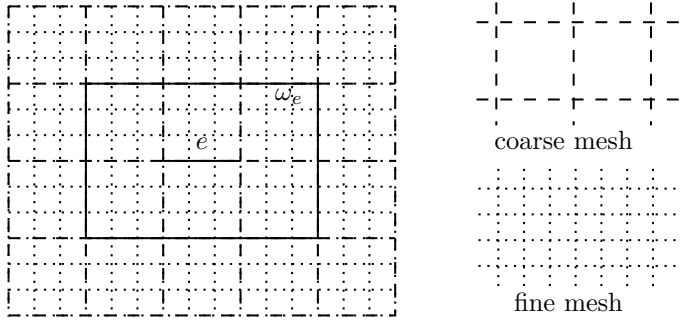
    The fine mesh is of size $h=1/1024$, which is fine enough to resolve the fine scale information of $a(x)$. The reference solution $u_{\text{ref}}$ is computed using the classical FEM on the fine mesh. Since $h$ is small, we will treat $u_{\text{ref}}$ as the ground truth $u$. The accuracy of our solutions ($u^{(k)}_{H,m}, k=1,2,3$) is computed by comparing them with the reference solution $u_{\text{ref}}$ on the fine mesh. The accuracy will be measured both in the $L^2$ norm and energy norm. They depend on the mesh size $H$, the number of enrichment basis functions $m$, the coefficient $a$ and the right-hand side $f$. We use the notation below:
    \begin{equation}
\label{rel_error}
\begin{aligned}
e^{(k)}_{L^{2}}(H,m, a, f)&=\frac{\|u_{\text{ref}}-u_{H,m}^{(k)}\|_{L^{2}(\Omega)}}{\|u_{\text{ref}}\|_{L^{2}(\Omega)}}\, ,\\
e^{(k)}_{E}(H,m, a, f)&=\frac{\|u_{\text{ref}}-u_{H,m}^{(k)}\|_{H^1_a(\Omega)}}{\|u_{\text{ref}}\|_{H^1_a(\Omega)}}\, . 
\end{aligned}
\end{equation}
   We will vary these parameters to test the convergence behavior of our method. We start with a detailed implementation of the algorithm in the next subsection.
    
\subsection{Implementation of Our Algorithm} In this subsection, we outline the implementation of our algorithm for computing $u^{(k)}_{H,m}, k=1,2,3$. 
We consider the setting that for a given $a$ we want to solve the equation with multiple $f$.  
In such a scenario, for the sake of efficiency, steps that are independent of $f$ will be computed offline and only be implemented once. Steps that are adaptive to $f$ will be put online and need to be implemented every time with a new $f$.
\subsubsection{Offline Stage} We start with the offline stage. The steps are outlined in order below. 
    \begin{enumerate}
    \item For each element of the coarse mesh $T \in \cT_H$, we build the local stiffness matrix for the local elliptic problem with Dirichlet's boundary condition.
    We can use this matrix to solve the local elliptic equation with arbitrary Dirichlet's boundary data effectively. Specifically, we can do $a$-harmonic extension of functions on edges to elements. 
    \item For each oversampling domain $\omega_e$, we build the local stiffness matrix for the elliptic problem in $\omega_e$ with Dirichlet's boundary condition, repeating the last step with $T$ replaced by $\omega_e$. 
    \item For each edge $e$ and its oversampling domain $\omega_e$, we build the matrix version of the operator $R_e$. Theoretically, the domain of $R_e$ comprises $a$-harmonic functions in $\omega_e$; they are fully determined by their trace on $\partial \omega_e$. Numerically, our matrix version of $R_e$, when viewed as a linear mapping, maps Dirichlet's boundary data on $\partial \omega_e$ to the image of $R_e$ on $e$.  
    \item After we discretize the corresponding norms in the domain and image of $R_e$, the SVD problem for $R_e$ transforms to a generalized eigenvalue problem for the discrete matrices. We use standard algorithms (in this paper, we use MATLAB's default $\mathsf{eig(A,B)}$) to get the top-$m$ singular vectors. Together with the interpolation part, their $a$-harmonic extensions constitute the basis functions that will be used in the Galerkin method.
    
    
    \item Using the basis functions, we assembly the corresponding global stiffness matrix by running over each element. 
        \end{enumerate}
       
        \begin{remark}
        The main computational cost in the offine line is step 4: the construction of edge basis functions. It requires us to compute the first $m$ singular vectors of $R_e$ corresponding to the $m$ largest singular values. Any fast algorithm of SVD can be employed here. In our implementation, we have used the simple MATLAB's default functions. It is also possible to use tools in randomized numerical linear algebra, such as the randonmized SVD to obtain efficient computation of these $m$ most significant singular vectors; see for example the works \cite{buhr2018randomized, chen2020randomized}. As mentioned in Remark \ref{rmk: spatial adaptivity}, we can also choose a different number of edge basis functions, say $m_e$, for each edge $e$. The final accuracy of the solution $u_{H,m}^{(3)}$ will be of order $(\max_{e\in \cE_H} \lambda_{e,m_e})\|f\|_{L^2(\Omega)}$ where $\lambda_{e,m_e}$ is the $m_e$-th singular value of $R_e$. We have an analytic bound
        \[\lambda_{e,m_e}\leq C\exp\left(-m_e^{(\frac{1}{d+1}-\epsilon)}\right)\]
        due to Theorem \ref{thm: svd exponential decay of Re}, so $O(\log^{d+1}(1/H))$ number of basis functions for each edge would be sufficient to achieve $O(H)$ accuracy. Nevertheless, in our numerical experiments, we found that a small number of basis functions have shown very good accuracy already. The nearly exponential decay is also observed in all our numerical experiments.
        \end{remark}
    
    \subsubsection{Online Stage} The online stages consists of the following steps. We put a bracket at the end of each step to indicate for which $k$ this step is needed.
    \begin{enumerate}
    \item Compute the bubble part $u^\sfb$ using the local stiffness matrix in each $T$ built offline. ($k=2,3$)
    \item Compute the oversampling bubble part $u^\sfb_{\omega_e}$ using the local stiffness matrix in each $\omega_e$ built offline. Use the boundary data $P_e ( u_{\omega_e}^{\sfb}-I_H u_{\omega_e}^{\sfb})$ on each $e$ to perform the $a$-harmonic extension; this leads to a basis function adaptive to $f$ for each $e$. Combine these new basis functions with those built offline to assembly an updated global stiffness matrix. Since we have computed an old stiffness matrix for the offline basis functions, we only need to incorporate the additional parts introduced by the new basis functions. ($k=3$) 
    \item Compute the Galerkin solution using $f$ and the global stiffness matrix. ($k=1,2,3$)
    \item Add the bubble part computed in the first step to the Galerkin solution. ($k=2,3$)
        \end{enumerate}
 We test our algorithm for three examples in the next subsections.      
\subsection{An Example with Multiple Spatial Scales}
    In the first example, we choose $a(x)$ with five scales as follows: 
   \begin{equation}
\begin{aligned} 
a(x)=\frac{1}{6}\left(\frac{1.1+\sin \left(2 \pi x_1 / \epsilon_{1}\right)}{1.1+\sin \left(2 \pi x_2 / \epsilon_{1}\right)}+\frac{1.1+\sin \left(2 \pi x_2 / \epsilon_{2}\right)}{1.1+\cos \left(2 \pi x_1 / \epsilon_{2}\right)}+\frac{1.1+\cos \left(2 \pi x_1 / \epsilon_{3}\right)}{1.1+\sin \left(2 \pi x_2 / \epsilon_{3}\right)}\right.\\\left.+\frac{1.1+\sin \left(2 \pi x_2 / \epsilon_{4}\right)}{1.1+\cos \left(2 \pi x_1 / \epsilon_{4}\right)}+\frac{1.1+\cos \left(2 \pi x_1 / \epsilon_{5}\right)}{1.1+\sin \left(2 \pi x_2 / \epsilon_{5}\right)}+\sin \left(4 x_1^{2} x_2^{2}\right)+1\right) \, ,\end{aligned}
\end{equation}
where $\epsilon_1=1/5$, $\epsilon_2=1/13$, $\epsilon_3=1/17$, $\epsilon_4=1/31$, $\epsilon_5=1/65$. We set $f=-1$.
    
    First, we test the convergence with respect to $H$. We compute $u^{(1)}_{H,m}$ for $m=0,1,2$ and $H=1/2^l$ where $l=3,4,5,6,7$. The energy and $L^2$ errors are shown in Figure \ref{fig: eg1, w.r.t H}.
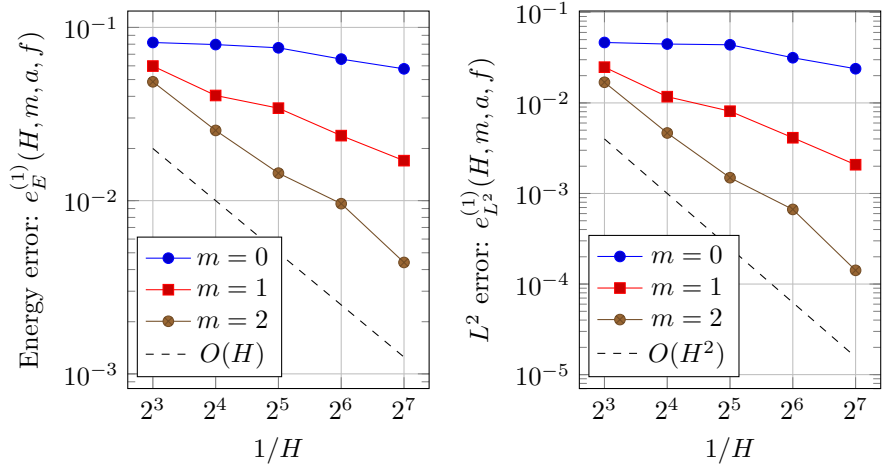
\begin{figure}[thbp]
    \centering
\begin{tikzpicture}
\begin{loglogaxis}[
width=2.2in, height=2.6in,
grid=major, 
xlabel={$1/H$}, 
ylabel={Energy error: $e^{(1)}_{E}(H,m, a, f)$}, 
legend pos=south west,
xtick={2^3,2^4,2^5,2^6,2^7},
xticklabels={$2^3$,$2^4$,$2^5$,$2^6$,$2^7$},
legend entries={$m=0$,$m=1$,$m=2$,$O(H)$}
]
\addplot coordinates {(8,8.1829e-2)(16,7.9607e-2)(32,7.6258e-2)(64,6.5624e-2)(128,5.7652e-2)};
\addplot coordinates {(8,5.9829e-2)(16,4.0457e-2)(32, 3.4185e-2)(64,2.3728e-2)(128,1.7015e-2)}; 
\addplot coordinates {(8,4.8524e-2)(16,2.5424e-2)(32, 1.4414e-2)(64,9.6038e-3)(128,4.3966e-3)}; 
\addplot[dashed] coordinates {(8,2e-2)(16,1e-2)(32, 5e-3)(64,2.5e-3)(128,1.25e-3)}; 
\end{loglogaxis}
\end{tikzpicture}
\hskip 5pt
\begin{tikzpicture}
\begin{loglogaxis}[
width=2.2in, height=2.6in, grid=major, 
xlabel={$1/H$}, 
ylabel={$L^2$ error: $e^{(1)}_{L^2}(H,m, a, f)$}, 
legend pos=south west,
xtick={2^3,2^4,2^5,2^6,2^7},
xticklabels={$2^3$,$2^4$,$2^5$,$2^6$,$2^7$},
legend entries={$m=0$,$m=1$,$m=2$,$O(H^2)$}
]
\addplot coordinates {(8,4.6485e-2)(16,4.4665e-2)(32,4.3847e-2)(64,3.1537e-2)(128,2.3839e-2)};
\addplot coordinates {(8,2.4809e-2)(16,1.1718e-2)(32, 8.0972e-3)(64,4.1258e-3)(128,2.0774e-3)}; 
\addplot coordinates {(8,1.6902e-2)(16,4.6547e-3)(32, 1.4931e-3)(64,6.6684e-4)(128,1.4170e-4)}; 
\addplot[dashed] coordinates {(8,8e-3/2)(16,2e-3/2)(32, 5e-4/2)(64,1.25e-4/2)(128,3.125e-5/2)}; 
\end{loglogaxis}
\end{tikzpicture}
\caption{Example 1. The coefficient $a$ has multiple scales, $f=-1$, $k=1$ and $m=0,1,2$; left: energy error w.r.t. $H$; right: $L^2$ error w.r.t. $H$.}
\label{fig: eg1, w.r.t H}
\end{figure}
We remark that $m=0$ corresponds to the vanilla MsFEM. We observe from Figure \ref{fig: eg1, w.r.t H} that the vanilla MsFEM does not lead to convergence of errors.  However, if we set $m=2$, then we can identify a roughly $O(H)$ tendency in the energy error and an $O(H^2)$ accuracy in the $L^2$ norm. This implies our edge basis functions can correctly capture the multiscale behavior of the solution. In addition, we observe that increasing $m$ may be more efficient than decreasing $H$.
    
Next, we fix $H=1/32$ and vary $m$ from $1$ to $7$ to compute $u^{(k)}_{H,m}$, $k=1,2,3$. The results are illustrated in Figure \ref{fig: eg1, const f, w.r.t m}. 
We observe a desired nearly exponential decay error for $u^{(3)}_{H,m}$. Without using bubble parts, the error for $u^{(1)}_{H,m}$ saturates after $m\geq 3$. Moreover, after adding $u^\sfb$, the solution $u^{(2)}_{H,m}$ achieves a much better accuracy compared to $u^{(1)}_{H,m}$. This demonstrates the benefits of our energy orthogonal design. Even without the oversampling bubble part, we can still get very accurate approximation. Because the additional computational cost for $u^{(2)}_{H,m}$ compared to $u^{(1)}_{H,m}$ is negligible, this result tells us that we should always compute the bubble part if we could. In the subsequent examples, we will omit the case $k=1$, and mainly investigate the performance of $k=2,3$. 
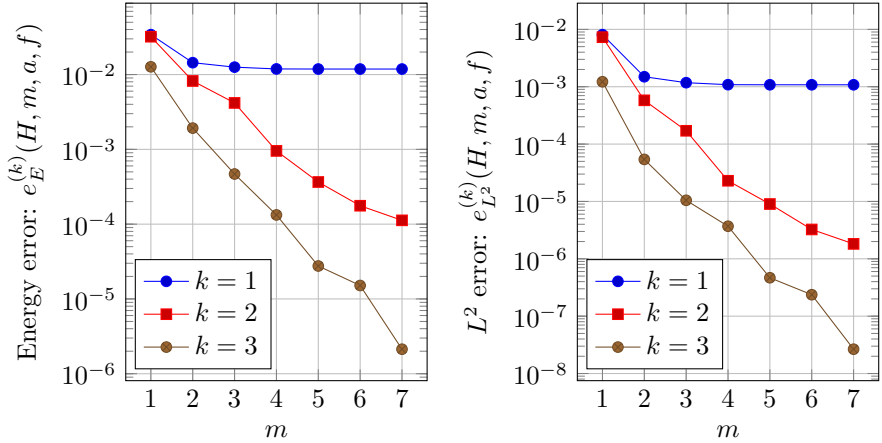
\begin{figure}[thbp]
    \centering
\begin{tikzpicture}
\begin{semilogyaxis}[
width=2.2in, height=2.6in,
grid=major, 
xlabel={$m$}, 
ylabel={Energy error: $e^{(k)}_{E}(H,m, a, f)$}, 
legend pos=south west,
xtick={1,2,3,4,5,6,7},
xticklabels={$1$,$2$,$3$,$4$,$5$,$6$,$7$},
legend entries={$k=1$,$k=2$,$k=3$}
]
\addplot coordinates {
   (1,3.4185e-02)
   (2,1.4414e-02)
   (3,1.2563e-02)
   (4,1.1887e-02)
   (5,1.1854e-02)
   (6,1.1850e-02)
   (7,1.1849e-02)};
\addplot coordinates {
   (1,3.2066e-02)
   (2,8.2077e-03)
   (3,4.1744e-03)
   (4,9.5310e-04)
   (5,3.6588e-04)
   (6,1.7630e-04)
   (7,1.1240e-04)
}; 
\addplot coordinates {
   (1,1.2692e-02)
   (2,1.9230e-03)
   (3,4.6793e-04)
   (4,1.3273e-04)
   (5,2.7569e-05)
   (6,1.5128e-05)
   (7,2.1279e-06)
}; 
\end{semilogyaxis}
\end{tikzpicture}
\hskip 5pt
\begin{tikzpicture}
\begin{semilogyaxis}[
width=2.2in, height=2.6in, grid=major, 
xlabel={$m$}, 
ylabel={$L^2$ error: $e^{(k)}_{L^2}(H,m, a, f)$}, 
legend pos=south west,
xtick={1,2,3,4,5,6,7},
xticklabels={$1$,$2$,$3$,$4$,$5$,$6$,$7$},
legend entries={$k=1$,$k=2$,$k=3$}
]
\addplot coordinates {
(1,8.0972e-03)
   (2,1.4931e-03)
   (3,1.1763e-03)
   (4,1.0854e-03)
   (5,1.0817e-03)
   (6,1.0813e-03)
   (7,1.0812e-03)
}; 
\addplot coordinates {
(1,7.3040e-03)
   (2,5.7978e-04)
   (3,1.7068e-04)
   (4,2.2953e-05)
   (5,9.0171e-06)
   (6,3.2485e-06)
   (7,1.8206e-06)
}; 
\addplot coordinates {
(1,1.2209e-03)
   (2,5.4079e-05)
   (3,1.0471e-05)
   (4,3.6988e-06)
   (5,4.6740e-07)
   (6,2.3659e-07)
   (7,2.6555e-08)
};
\end{semilogyaxis}
\end{tikzpicture}
\caption{Example 1. The coefficient $a$ has multiple scales, $f=-1$, $H=1/32$, and $k=1,2,3$; left: energy error w.r.t. $m$; right: $L^2$ error w.r.t. $m$.}
\label{fig: eg1, const f, w.r.t m}
\end{figure}

We remark that in our uniform coarse mesh, there are $2/H\cdot (1/H-1)$ coarse edges and $(1/H-1)^2$ interior nodes, if $1/H$ is an integer. Thus, the number of basis functions used for computing $u^{(1)}_{H,m}, u^{(2)}_{H,m}$ is $2/H\cdot (1/H-1)m+(1/H-1)^2$, while for $u^{(3)}_{H,m}$ it is $2/H\cdot (1/H-1)(m+1)+(1/H-1)^2$. Among all these basis functions, $(1/H-1)^2$ of them belong to the interpolation part and have supports in the union of four elements, while the remaining ones are supported in the union of two elements.

    
        \subsection{Random Field without Scale Separation}
        In the second example, we choose $a(x)$ to be a realization of some random field. More precisely, we set
        \begin{equation}
a(x)=|\xi(x)|+0.5\, ,
\end{equation}
        where the field $\xi(x)$ satisfies \[\xi(x)=a_{11}\xi_{i,j}+a_{21}\xi_{i+1,j}+a_{12}\xi_{i,j+1}+a_{22}\xi_{i+1,j+1}, \quad \text{if}\ x \in [\frac{i}{2^{7}},\frac{i+1}{2^{7}})\times [\frac{j}{2^{7}},\frac{j+1}{2^{7}})\, .\]
        Here, $\{\xi_{i,j}, 0\leq i,j \leq 2^7 \}$ are i.i.d. unit Gaussian random variables. In addition, $a_{11}=(i+1-2^7x_1)(j+1-2^7x_2)$, $a_{21}=(2^7x_1-i)(j+1-2^7x_2)$, $a_{12}=(i+1-2^7x_1)(2^7x_2-j)$, $a_{22}=(2^7x_1-i)(2^7x_2-j)$ are interpolating coefficients to make $\xi(x)$ piecewise linear.
        Generally, a typical realization of the field $a(x)$ is rough and of no scale separation. 

    We choose the same right-hand side $f=-1$. For a single realization of $a(x)$, we compute $u^{(k)}_{H,m}$ for $k=2,3$. As before, we fix $H=1/32$ and vary $m$ from $1$ to $7$. The errors are output in Figure \ref{fig: eg2, const f, w.r.t m}. For this example, we observe that both $k=2$ and $k=3$ yield nearly exponential decay of approximation errors with respect to $m$. Using $k=2$ can already be very efficient, although theoretically we only get an $O(H)$ upper bound for the accuracy.
\begin{figure}[thbp]
    \centering
\begin{tikzpicture}
\begin{semilogyaxis}[
width=2.2in, height=2.6in,
grid=major, 
xlabel={$m$}, 
ylabel={Energy error: $e^{(k)}_{E}(H,m, a, f)$}, 
legend pos=south west,
xtick={1,2,3,4,5,6,7},
xticklabels={$1$,$2$,$3$,$4$,$5$,$6$,$7$},
legend entries={$k=2$,$k=3$}
]
\addplot table[x=m,y=error] {
m error
1	0.0276744954392036
2	0.00924211085077798
3	0.00328549037580530
4	0.000700571079816568
5	0.000216368539312276
6	8.05312104398087e-05
7	5.60600184635196e-05
};
\addplot table[x=m,y=error] {
m error
1	0.0212500633217292
2	0.00666389340574544
3	0.00214562124749342
4	0.000488553064796399
5	0.000115063831227054
6	2.89237594675568e-05
7	1.49393397808907e-05
};
\end{semilogyaxis}
\end{tikzpicture}
\hskip 5pt
\begin{tikzpicture}
\begin{semilogyaxis}[
width=2.2in, height=2.6in, grid=major, 
xlabel={$m$}, 
ylabel={$L^2$ error: $e^{(k)}_{L^2}(H,m, a, f)$}, 
legend pos=south west,
xtick={1,2,3,4,5,6,7},
xticklabels={$1$,$2$,$3$,$4$,$5$,$6$,$7$},
legend entries={$k=2$,$k=3$}
]
\addplot table[x=m,y=error] {
m error
1	0.00470024318363872
2	0.000581116523436520
3	9.62973588489628e-05
4	1.20810556414430e-05
5	3.45040478318727e-06
6	1.14051816487618e-06
7	6.51185906872642e-07
};
\addplot table[x=m,y=error] {
m error
1	0.00282110065492825
2	0.000318266670675074
3	4.78914298312000e-05
4	7.67793925649996e-06
5	1.85796885181761e-06
6	3.97908541223471e-07
7	1.63575981693026e-07
};
\end{semilogyaxis}
\end{tikzpicture}
\caption{Example 1. The coefficient $a$ is a rough random field, $f=-1$, $H=1/32$, and $k=2,3$; left: energy error w.r.t. $m$; right: $L^2$ error w.r.t. $m$.}
\label{fig: eg2, const f, w.r.t m}
\end{figure}
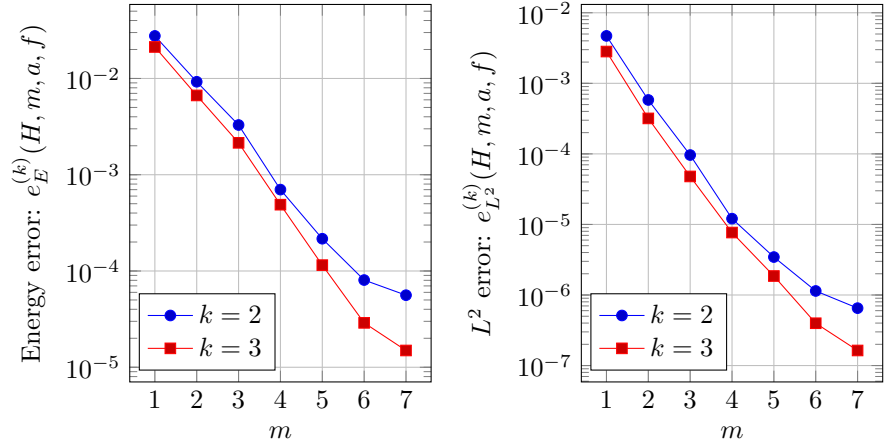

\subsection{An Example with High Contrast Channels}
In the third example, we consider an $a(x)$ with high contrast channels. Let 
\[X:=\{(x_1,x_2) \in [0,1]^2, x_1,x_2 \in \{0.2,0.3,...,0.8\}\} \subset [0,1]^2 \, , \]
and the coefficient is defined as
\begin{equation*}
    a(x)=\left\{ 
    \begin{aligned} 1, \quad &\text{if} \ \dist(x,X)\geq 0.025\\
    M, \quad &\text{else}\, .
    \end{aligned}
    \right.
\end{equation*}
Here, $M$ is a parameter controlling the contrast. We visualize $\log_{10} a(x)$ in Figure $\ref{fig:log a(x)}$ for $M=10^4$.
\begin{figure}[htbp]
    \centering
    \includegraphics[width=8cm]{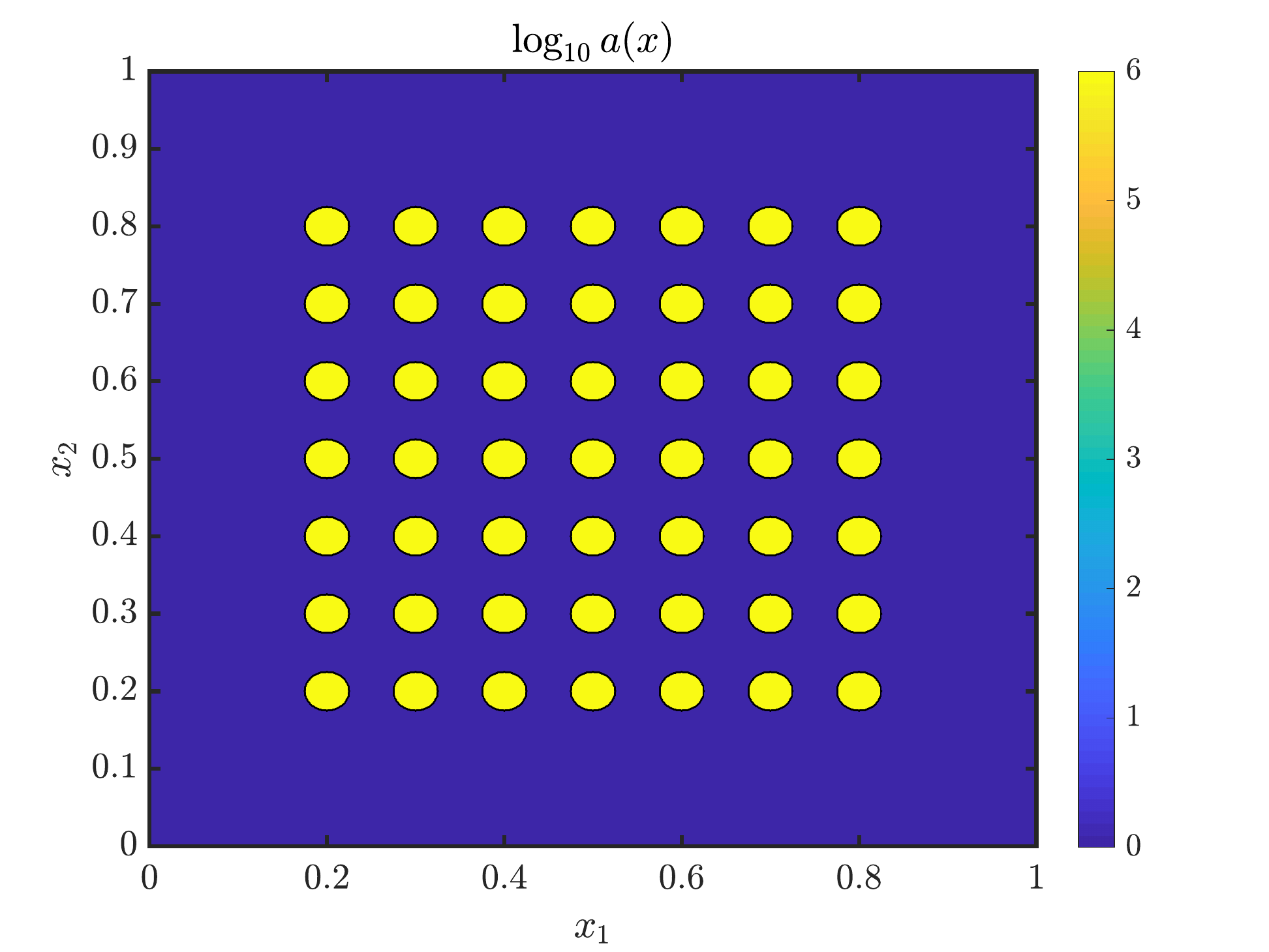}
    \caption{Example 3. The coefficient has high contrasts. The contour of $\log_{10} a(x)$ for $M=10^4$.}
    \label{fig:log a(x)}
\end{figure}
    
    For this high contrast media, we test the performance of $u^{(k)}_{H,m}$ for $k=2,3$ in the following. We use a non-constant right-hand side $f(x)=x_1^4-x_2^3+1$.
    The contrast $M$ is set to be $2^{10},2^{14}$ respectively.  The coarse mesh size is $H=1/32$ and we vary $m=1,2,...,7$. We present the energy errors and $L^2$ errors of the solution in Figures \ref{fig: eg3, energy w.r.t m} and \ref{fig: eg3, L2 w.r.t m}. 
\begin{figure}[thbp]
\centering
\begin{tikzpicture}
\begin{semilogyaxis}[
width=3in, height=2.8in,
grid=major, 
xlabel={$m$}, 
ylabel={Energy error: $e^{(k)}_{E}(H,m, a, f)$}, 
legend pos=north east,
xtick={1,2,3,4,5,6,7},
xticklabels={$1$,$2$,$3$,$4$,$5$,$6$,$7$},
legend entries={$k=2;M=2^{10}$,$k=3;M=2^{10}$, $k=2;M=2^{14}$,$k=3;M=2^{14}$}
]
\addplot table[x=m,y=error] {
m error
1	0.0146611012526805
2	0.00153141746090508
3	0.000502410202338436
4	0.000218123599422783
5	9.99879866531240e-05
6	7.12734875216514e-05
7	5.35156692319484e-05
};
\addplot table[x=m,y=error] {
m error
1	0.00394726646849282
2	0.000609007221641966
3	0.000267135220918456
4	0.000130272159807889
5	8.85113340230787e-06
6	6.05903155839489e-06
7	4.02567413687105e-06
};
\addplot table[x=m,y=error] {
m error
1	0.0147573706985741
2	0.00151432531359210
3	0.000501499499158390
4	0.000213997195011111
5	9.94036042858331e-05
6	7.07719830180688e-05
7	5.34406682037078e-05
};
\addplot table[x=m,y=error] {
m error
1	0.00397138585688078
2	0.000602497093174641
3	0.000263912500789952
4	0.000124975956061889
5	7.85163856769751e-06
6	5.29418711147646e-06
7	4.11733694222059e-06
};
\end{semilogyaxis}
\end{tikzpicture}
\caption{Example 3. The coefficient $a$ has high contrast, $f(x)=x_1^4-x_2^3+1$, $H=1/32$, and $k=2,3$. The energy error w.r.t. $m$.}
\label{fig: eg3, energy w.r.t m}
\end{figure}
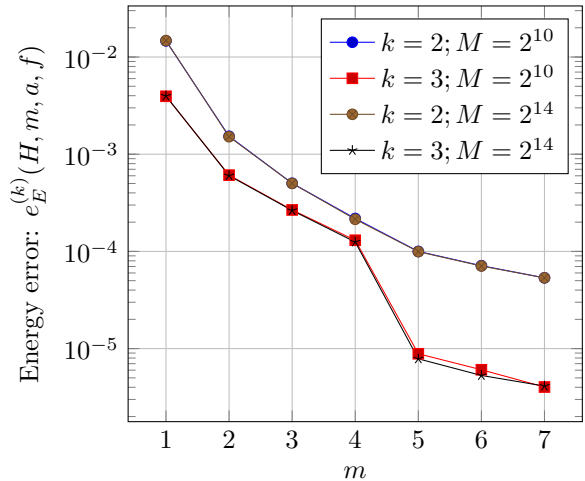
\begin{figure}[thbp]
\centering
\begin{tikzpicture}
\begin{semilogyaxis}[
width=3in, height=2.8in,
grid=major, 
xlabel={$m$}, 
ylabel={$L^2$ error: $e^{(k)}_{L^2}(H,m, a, f)$}, 
legend pos=north east,
xtick={1,2,3,4,5,6,7},
xticklabels={$1$,$2$,$3$,$4$,$5$,$6$,$7$},
legend entries={$k=2;M=2^{10}$,$k=3;M=2^{10}$, $k=2;M=2^{14}$,$k=3;M=2^{14}$}
]
\addplot table[x=m,y=error] {
m error
1	0.00191324327878729
2	4.85458593617026e-05
3	9.98804157286652e-06
4	4.29671381502487e-06
5	1.47778922169440e-06
6	9.07747800276336e-07
7	5.99293456645945e-07
};
\addplot table[x=m,y=error] {
m error
1	0.000188355565270131
2	1.30153519818250e-05
3	4.42745646572276e-06
4	2.79362019611027e-06
5	9.04014582138583e-08
6	5.92727590344307e-08
7	3.77143964637840e-08
};
\addplot table[x=m,y=error] {
m error
1	0.00193918209258177
2	4.84257331483478e-05
3	1.00058821756274e-05
4	4.32849026527155e-06
5	1.47654461014832e-06
6	9.01917435085700e-07
7	5.99486031908748e-07
};
\addplot table[x=m,y=error] {
m error
1	0.000191560044691069
2	1.30657461164631e-05
3	4.44681370202308e-06
4	2.81740963239875e-06
5	8.99320064722522e-08
6	5.99321619562078e-08
7	3.90425865804247e-08
};
\end{semilogyaxis}
\end{tikzpicture}
\caption{Example 3. The coefficient $a$ is has high contrast, $f(x)=x_1^4-x_2^3+1$, $H=1/32$, and $k=2,3$. The $L^2$ error w.r.t. $m$.}
\label{fig: eg3, L2 w.r.t m}
\end{figure}
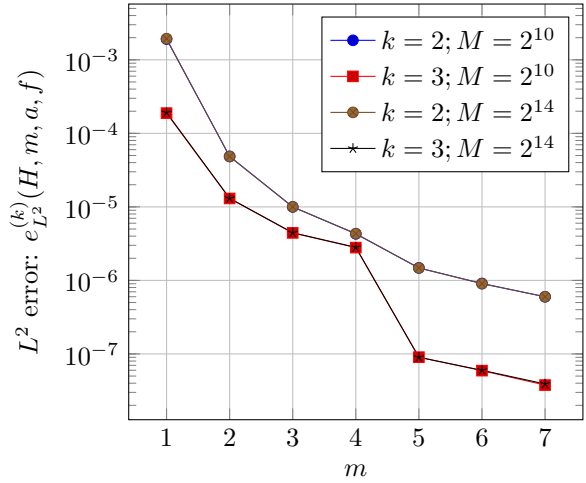    

We observe a contrast independent error decay: our method demonstrates some robustness with respect to high contrast in $a(x)$. We believe the reason could be that every step in the algorithm is adaptive to $a(x)$, for example, the singular value decay of the operator $R_e$ would have some robustness regarding high contrasts in $a(x)$ because both of the norms in the domain and image of this operator is $a(x)$-weighted. We leave the theoretical analysis of deriving an $a(x)$-adapted estimates for future study. 
\section{Concluding Remarks} In this section, we summarize the main findings of this paper, provide several relevant discussions and generalizations, and draw our conclusions accordingly.
\label{sec: discussion}
\subsection{Summary}
In this paper, we developed a multiscale framework to solve second-order linear elliptic PDEs with rough coefficients, extending the previous work \cite{hou2015optimal}. The energy orthogonal decomposition of the solution space into $ a $-harmonic parts and bubble parts is the critical component of our approach, leading to distinct treatments for the two components. The bubble part depends on local information of $f$ and can be computed locally and efficiently. The $ a $-harmonic part depends entirely on function values on edges, so we use multiscale edge basis functions to approximate it. The initial decomposition, combined with a subsequent coupling of the two components, eventually leads to nearly exponential convergence of the approximation error with respect to degrees of freedom. We provided a rigorous proof of this phenomenon. 

\subsection{Discussions}
The most technical part of the proof lies in the approximation of the coarse scale component. Our coarse scale function $u^\sfh$, being $a$-harmonic in each local element, can be approximated with exponential efficiency if some information from the oversampling domains is utilized. We used a result from \cite{babuska2011optimal}, which is Lemma \ref{thm: decay of restriction operator for harmonic functions} in this paper, to prove this exponential accuracy rigorously. Our numerical experiments validated the theoretical analysis. We observed evidence of a nearly exponential convergence. Compared to the work \cite{babuska2011optimal}, which can also achieved exponential convergence and relies on a partition of unity for the overlapping domain decomposition, our edge coupling uses a non-overlapping decomposition of the domain. Thus, our local domain and local basis functions would have a smaller length scale, and every single local computation is more efficient. On the other hand, the number of edges is larger than the number of elements so we may get more basis functions. It is of future interest to compare our approach with the one in \cite{babuska2011optimal} and to establish a more systematic comparison of algorithms with overlapping and non-overlapping domains. 
Moreover, both \cite{babuska2011optimal} and our work deal with the level of multiscale model reduction. It would be interesting to explore the multilevel structure of the coarse scale space $V^{\sfh}$, possibly via a multigrid type of algorithm. The work of Gamblets \cite{owhadi_multigrid_2017, owhadi2019operator} has achieved this goal, for a different energy orthogonal decomposition of the solution space.

    Our current theory does not fully explain the robustness with respect to the contrast of $a(x)$ that we observed in the experiments. It is of future interest to perform a theoretical analysis of this phenomenon. Intuitively, a guiding principle for designing methods robust to high contrast may be to make every step $a(x)$-adaptive. Our approach is indeed $a(x)$-adaptive. The $\cH^{1/2}(e)$ norm and the $H_a^1(\omega_e)$ norm used in the SVD both depend on $a(x)$. We expect an estimate of the decay of singular values that has better scaling regarding the contrast of $ a(x)$.
    
    In Theorem \ref{thm: Edge coupling error estimate}, the integration of local errors to a global error implies the importance of performing approximation on each edge in the $H^{1/2}_{00}(e)$ space. This is one of the main messages that we would like to convey in this paper. We also believe this theorem may lead to broader applications; it provides a general way of coupling errors through local edge approximation. Various ways of local edge approximation can be designed based on this theorem, and the final accuracy will depend on the decay of the singular values of some related operator, as we have discussed in Subsection \ref{sec: Connection to Existing Works}.
    
    \subsection{Generalizations of the Method}
    There can be several natural generalizations of the method in this paper. The first is on the boundary condition. When the problem is posed subject to the homogeneous or non-homogeneous Neumann boundary conditions, we should adapt our energy orthogonal decomposition to these conditions. For example, suppose on the boundary we have $\frac{\partial u}{\partial n}=g$,
  where $n$ is the unit-normal vector at the boundary and $g$ is a function on $\partial \Omega$. For an element $T$ adjacent to the boundary, we will decompose the solution $u=u^{\sfh}_{T}+u^{\sfb}_{T}+u^{\mathsf{p}}_T$ where
   \begin{equation}
    \label{eqn: harmonic-bubble splitting bc}
    \begin{aligned}
    &\left\{
    \begin{aligned}
    -\nabla \cdot (a \nabla u_T^\sfh )&=0, \quad \text{in} \  T\\
    u_T^\sfh&=u, \quad \text{on} \  \partial T \backslash \partial \Omega\\
    \frac{\partial u_T^\sfh}{\partial n}&=0, \quad \text{on} \  \partial T \cap \partial \Omega \, ,
    \end{aligned}
    \right.
    \\
    &\left\{
    \begin{aligned}
    -\nabla \cdot (a \nabla u^\sfb_T )&=f, \quad \text{in} \  T\\
    u^\sfb_T&=0, \quad \text{on} \  \partial T \backslash \partial \Omega \\
    \frac{\partial u_T^\sfb}{\partial n}&=0, \quad \text{on} \  \partial T \cap \partial \Omega \, .
    \end{aligned}
    \right.
    \end{aligned}
    \end{equation}
 The additional, third component $u^{\mathsf{p}}_T$ is to incorporate the non-homogeneous boundary condition using a particular solution:
    \begin{equation}
    \left\{
    \begin{aligned}
    -\nabla \cdot (a \nabla u^{\mathsf{p}}_T )&=0, \quad \text{in} \  T\\
    u^{\mathsf{p}}_T&=0, \quad \text{on} \  \partial T \backslash \partial \Omega \\
    \frac{\partial u_T^{\mathsf{p}}}{\partial n}&=g, \quad \text{on} \  \partial T \cap \partial \Omega \, .
    \end{aligned}
    \right.
    \end{equation}
Similar to the case of the homogeneous Dirichlet boundary condition, we can approximate the three components separately. Both $u^{\sfb}_{T}$ and $u^{\mathsf{p}}_T$ can be approximated via local computation, and the $a$-harmonic part $u^{\sfh}_{T}$ can be done via a similar oversampling and SVD.

The second natural generalization is to higher dimensions $d\geq 3$. In this case, the nodal interpolation part is not enough to localize the approximation onto each edge with co-dimension $1$. If we do not use the partition of unity and still use a non-overlapping domain decomposition, we may need to design approximation spaces for all cells with co-dimension $1,2,...,d$ that constitute the mesh. Indeed, the method in this paper for $d=2$ can be understood as a way of reducing the task of approximation of functions in $T$ of dimension $d$, to approximation of functions on edges (of co-dimension $1$) and nodes (of co-dimension $2$). The enrichment part is designed for the former case, and the interpolation part is for the latter case. For general $d$, this framework entails us to approximate functions on cells with co-dimension ranging from $1$ to $d$.

\subsection{Conclusions} Overall, this paper explores a novel energy orthogonal decomposition of the solution space, which takes advantage of the structural information of the different components, such as the locality in computation (for the bubble part) and the exponentially efficient approximation (for the $a$-harmonic part). This is why the overall exponential convergence can be achieved. Furthermore, in our non-overlapping domain decomposition, we build a general framework for edge coupling that can integrate local errors to a guarantee of global accuracy. It is of interest to extend this methodology beyond the case of ellipicity, namely to other types of PDEs, such as the Helmholtz equation, especially in the high frequency regime. We will explore these possibilities in our subsequent works.




\bibliographystyle{siamplain}

\begin{thebibliography}{10}
\bibitem{babuvska2014machine}
{\sc I.~Babu{\v{s}}ka, X.~Huang, and R.~Lipton}, {\em Machine computation using
  the exponentially convergent multiscale spectral generalized finite element
  method}, ESAIM: Mathematical Modelling and Numerical Analysis, 48 (2014),
  pp.~493--515.

\bibitem{babuska2011optimal}
{\sc I.~Babu{\v{s}}ka and R.~Lipton}, {\em Optimal local approximation spaces
  for generalized finite element methods with application to multiscale
  problems}, Multiscale Modeling \& Simulation, 9 (2011), pp.~373--406.

\bibitem{babuvska2020multiscale}
{\sc I.~Babu{\v{s}}ka, R.~Lipton, P.~Sinz, and M.~Stuebner}, {\em
  Multiscale-spectral gfem and optimal oversampling}, Computer Methods in
  Applied Mechanics and Engineering, 364 (2020), p.~112960.

\bibitem{babuvska2000can}
{\sc I.~Babu{\v{s}}ka and J.~Osborn}, {\em Can a finite element method perform
  arbitrarily badly?}, Mathematics of Computation, 69 (2000), pp.~443--462.

\bibitem{babuvska1983generalized}
{\sc I.~Babu{\v{s}}ka and J.~E. Osborn}, {\em Generalized finite element
  methods: their performance and their relation to mixed methods}, SIAM Journal
  on Numerical Analysis, 20 (1983), pp.~510--536.

\bibitem{berlyand2010flux}
{\sc L.~Berlyand and H.~Owhadi}, {\em Flux norm approach to finite dimensional
  homogenization approximations with non-separated scales and high contrast},
  Archive for rational mechanics and analysis, 198 (2010), pp.~677--721.

\bibitem{brezzi1994choosing}
{\sc F.~Brezzi and A.~Russo}, {\em Choosing bubbles for advection-diffusion
  problems}, Mathematical Models and Methods in Applied Sciences, 4 (1994),
  pp.~571--587.
  
\bibitem{buhr2018randomized}
{\sc A.~Buhr and K.~Smetana}, {\em Randomized local model order reduction}, SIAM Journal on Scientific Computing, 40 (2018),
  pp.~A2120--A2151.
  
\bibitem{chen2020randomized}
{\sc K.~Chen, Q.~Li, J.~Lu, and S.~J. Wright}, {\em Randomized Sampling for Basis Function Construction in Generalized Finite Element Methods}, Multiscale Modeling \& Simulation, 18 (2020),
  pp.~1153--1177.
  
\bibitem{dorobantu1998wavelet}
{\sc M.~Dorobantu and B.~Engquist}, {\em Wavelet-based numerical
  homogenization}, SIAM Journal on Numerical Analysis, 35 (1998), pp.~540--559.

\bibitem{efendiev_generalized_2013}
{\sc Y.~Efendiev, J.~Galvis, and T.~Y. Hou}, {\em Generalized multiscale finite
  element methods ({GMsFEM})}, Journal of Computational Physics, 251 (2013),
  pp.~116 -- 135.

\bibitem{efendiev2009multiscale}
{\sc Y.~Efendiev and T.~Y. Hou}, {\em Multiscale finite element methods: theory
  and applications}, vol.~4, Springer Science \& Business Media, 2009.

\bibitem{efendiev2000convergence}
{\sc Y.~R. Efendiev, T.~Y. Hou, and X.-H. Wu}, {\em Convergence of a
  nonconforming multiscale finite element method}, SIAM Journal on Numerical
  Analysis, 37 (2000), pp.~888--910.

\bibitem{engquist2002wavelet}
{\sc B.~Engquist and O.~Runborg}, {\em Wavelet-based numerical homogenization
  with applications}, in Multiscale and multiresolution methods, Springer,
  2002, pp.~97--148.

\bibitem{franca1995bubble}
{\sc L.~P. Franca and C.~Farhat}, {\em Bubble functions prompt unusual
  stabilized finite element methods}, Computer Methods in Applied Mechanics and
  Engineering, 123 (1995), pp.~299--308.

\bibitem{franca1996deriving}
{\sc L.~P. Franca and A.~Russo}, {\em Deriving upwinding, mass lumping and
  selective reduced integration by residual-free bubbles}, Applied mathematics
  letters, 9 (1996), pp.~83--88.

\bibitem{fu2019edge}
{\sc S.~Fu, E.~Chung, and G.~Li}, {\em Edge multiscale methods for elliptic
  problems with heterogeneous coefficients}, Journal of Computational Physics,
  396 (2019), pp.~228--242.

\bibitem{gilbarg2015elliptic}
{\sc D.~Gilbarg and N.~S. Trudinger}, {\em Elliptic partial differential
  equations of second order}, springer, 2015.

\bibitem{hou2015optimal}
{\sc T.~Y. Hou and P.~Liu}, {\em Optimal local multi-scale basis functions for
  linear elliptic equations with rough coefficient}, Discrete and Continuous
  Dynamical Systems, 36 (2016), pp.~4451--4476.

\bibitem{hou_multiscale_1997}
{\sc T.~Y. Hou and X.-H. Wu}, {\em A {Multiscale} {Finite} {Element} {Method}
  for {Elliptic} {Problems} in {Composite} {Materials} and {Porous} {Media}},
  Journal of Computational Physics, 134 (1997), pp.~169 -- 189.

\bibitem{hou1999convergence}
{\sc T.~Y. Hou, X.-H. Wu, and Z.~Cai}, {\em Convergence of a multiscale finite
  element method for elliptic problems with rapidly oscillating coefficients},
  Mathematics of computation, 68 (1999), pp.~913--943.

\bibitem{hughes1995multiscale}
{\sc T.~J. Hughes}, {\em Multiscale phenomena: Green's functions, the
  dirichlet-to-neumann formulation, subgrid scale models, bubbles and the
  origins of stabilized methods}, Computer methods in applied mechanics and
  engineering, 127 (1995), pp.~387--401.

\bibitem{hughes_variational_1998}
{\sc T.~J.~R. Hughes, G.~R. Feijóo, L.~Mazzei, and J.-B. Quincy}, {\em The
  variational multiscale method—a paradigm for computational mechanics},
  Computer Methods in Applied Mechanics and Engineering, 166 (1998), pp.~3 --
  24.

\bibitem{kornhuber2018analysis}
{\sc R.~Kornhuber, D.~Peterseim, and H.~Yserentant}, {\em An analysis of a
  class of variational multiscale methods based on subspace decomposition},
  Mathematics of Computation, 87 (2018), pp.~2765--2774.

\bibitem{li2019convergence}
{\sc G.~Li}, {\em On the convergence rates of gmsfems for heterogeneous
  elliptic problems without oversampling techniques}, Multiscale Modeling \&
  Simulation, 17 (2019), pp.~593--619.

\bibitem{malqvist_localization_2014}
{\sc A.~M{\aa}lqvist and D.~Peterseim}, {\em Localization of elliptic
  multiscale problems}, Mathematics of Computation, 83 (2014), pp.~2583--2603.

\bibitem{melenk2000n}
{\sc J.~M. Melenk}, {\em On n-widths for elliptic problems}, Journal of
  mathematical analysis and applications, 247 (2000), pp.~272--289.

\bibitem{melenk1996partition}
{\sc J.~M. Melenk and I.~Babu{\v{s}}ka}, {\em The partition of unity finite
  element method: basic theory and applications}, Computer methods in applied
  mechanics and engineering, 139 (1996), pp.~289--314.

\bibitem{owhadi_multigrid_2017}
{\sc H.~Owhadi}, {\em Multigrid with {Rough} {Coefficients} and
  {Multiresolution} {Operator} {Decomposition} from {Hierarchical}
  {Information} {Games}}, SIAM Review, 59 (2017), pp.~99--149.

\bibitem{owhadi2019operator}
{\sc H.~Owhadi and C.~Scovel}, {\em Operator-Adapted Wavelets, Fast Solvers,
  and Numerical Homogenization: From a Game Theoretic Approach to Numerical
  Approximation and Algorithm Design}, vol.~35, Cambridge University Press,
  2019.

\bibitem{owhadi2011localized}
{\sc H.~Owhadi and L.~Zhang}, {\em Localized bases for finite-dimensional
  homogenization approximations with nonseparated scales and high contrast},
  Multiscale Modeling \& Simulation, 9 (2011), pp.~1373--1398.

\bibitem{owhadi2014polyharmonic}
{\sc H.~Owhadi, L.~Zhang, and L.~Berlyand}, {\em Polyharmonic homogenization,
  rough polyharmonic splines and sparse super-localization}, ESAIM:
  Mathematical Modelling and Numerical Analysis, 48 (2014), pp.~517--552.

\bibitem{pinkus2012n}
{\sc A.~Pinkus}, {\em N-widths in Approximation Theory}, vol.~7, Springer
  Science \& Business Media, 2012.

\bibitem{tartar2007introduction}
{\sc L.~Tartar}, {\em An introduction to Sobolev spaces and interpolation
  spaces}, vol.~3, Springer Science \& Business Media, 2007.

\end{thebibliography}

\end{document}